\newtheorem{thm}{Theorem}[section]
\newtheorem{cor}[thm]{Corollary}
\newtheorem{lem}[thm]{Lemma}
\newtheorem{prop}[thm]{Proposition}
\theoremstyle{definition}
\newtheorem{defn}[thm]{Definition}
\newtheorem{assm}[thm]{Assumption}
\newtheorem{rmk}[thm]{Remark}
\DeclareMathOperator{\Hom}{Hom} 
 \DeclareMathOperator{\rk}{rk}
 \DeclareMathOperator{\Sym}{Sym}
\DeclareMathOperator{\Pic}{Pic} 
\newcommand{\C}{\ensuremath\mathbb{C}}
\newcommand{\R}{\ensuremath\mathbb{R}}
\newcommand{\Z}{\ensuremath\mathbb{Z}}
\newcommand{\Q}{\ensuremath\mathbb{Q}}
\newcommand{\fa}{\ensuremath\mathfrak{a}}
\newcommand{\fb}{\ensuremath\mathfrak{b}}
\newcommand{\PP}{\ensuremath\mathbb{P}}
\newcommand{\calO}{\ensuremath\mathcal{O}}
\newcommand{\calF}{\ensuremath\mathscr{F}}
\newcommand{\HH}{\ensuremath\mathrm{H}}
\newcommand{\CH}{\ensuremath\mathrm{CH}}
\begin{document}
\title{Surfaces with involution and Prym constructions}
\author{Mingmin Shen}
\thanks{The author is supported by Simons Foundation as a Simons Postdoctoral Fellow.}
\address{DPMMS, University of Cambridge, Wilberforce Road, Cambridge CB3 0WB, UK}
\email{M.Shen@dpmms.cam.ac.uk}

\subjclass[2010]{14J99, 14C15, 14F22}

\keywords{Hodge structure, Chow group, Brauer group, cubic fourfold,
conic bundle}
\date{}

\begin{abstract}
An involution on a surface induces involutions on the cohomology,
the Chow group and the Brauer group of the surface. We give a
detailed study of those actions and show that the odd part of these
groups can be used to describe the geometry of cubic fourfolds and
conic bundles over $\PP^3$.
\end{abstract}

\maketitle

\section{Introduction}
When a curve $C$ admits an involution $\sigma:C\rightarrow C$, its
Jacobian $J(C)$ splits into the even part and the odd part. When
$\sigma$ has no fixed point or exactly two fixed points, the odd
part $\mathrm{Pr}(C,\sigma)=\mathrm{Im}\{\sigma-1:J(C)\rightarrow
J(C)\}$ admits a natural principal polarization. The resulting
principally polarized abelian variety is defined to be the
\textit{Prym variety} of $C$ with the involuation $\sigma$. See
\cite{mumford} for more details. Among many interesting
applications, the theory was used to characterize the intermediate
Jacobian of certain types of Fano threefolds and quadrics bundles
(see, for example, \cite{cg, murre1, murre2, beauville}). Together
with the geometry of the theta divisor, such a description was used
to study various rationality questions.

In this paper, we study the case where the curve is replaced by a
surface. Throughout the paper, we will be working over the field
$\C$ of complex numbers. Let $S$ be a smooth projective surface with
$\HH_1(S,\Z)=0$ and $\sigma:S\rightarrow S$ an involution with only
isolated fixed points. Then $\sigma$ induces involutions on the
cohomology $\HH^2(S,\Z)$, the Chow group $\mathrm{A}_0(S)$ of
0-cycles with degree zero and the Brauer group $\mathrm{Br}(S)$.
\begin{defn}[see \cite{pt}]
Let $\Lambda$ be an abelian group together with an involution
$\sigma:\Lambda\rightarrow\Lambda$, then we define the \textit{Prym
part} of $\Lambda$ to be
$\mathrm{Pr}(\Lambda,\sigma)=\mathrm{Im}\{\sigma-1:\Lambda\rightarrow\Lambda\}$.
\end{defn}

With the above definition we can talk about the Prym part of the
cohomology, the Chow group and the Brauer group of $S$. For the
cohomology group, we usually have a geometrically defined
$\sigma$-invariant subgroup $M\subset\HH^2(S,\Z)$ of algebraic
classes. Hence $\sigma$ induces an action on its orthogonal
complement $M^\perp$ and we take the Prym part of $M^\perp$. This
should be viewed as the Prym part of the ``primitive cohomology".
Then this Prym construction can be used to describe the Hodge
structure, the Chow group and the ``second Brauer groups",
$\mathrm{Br}_2(X)$ (see Definition \ref{defn of second Brauer
group}), of cubic fourfolds and conic bundles. To be more precise,
if $X\subset\PP^5$ is a smooth cubic fourfold, then we take $S=S_l$
to be the surface of lines meeting a given general line $l\subset
X$. Then there is a natural involution $\sigma$ on $S$ induced by
taking the residue of two intersecting lines. If $f:X\rightarrow
B=\PP^3$ is a standard conic bundle, then we take $S$ to be the
surface of lines in broken conics. We assume that $S$ is smooth
projective with $\HH_1(S,\Z)=0$. Then there is again an involution
$\sigma$ which is induced by switching the two lines in a broken
conic. In the second case, we also assume that the degree of the
degeneration divisor is odd.  We show that in both cases the Hodge
structure and the Chow group of $X$ can be recovered as the Prym
part of the corresponding structures of the surface $S$. The group
$\mathrm{Br}_2(X)$ can also be recovered in generic cases. When $X$
is special, there might be a 2-torsion subgroup $K$ of
$\mathrm{Br}_2(X)$ which does not appear in the Prym part of the
Brauer group of $S$. In the case when $X$ is a cubic fourfold, this
group $K$ is independent of the choice of the general line $l\subset
X$. So far we do not have an example where the group $K$ is
nontrivial.

The idea of using the space of lines on a hypersurface to describe
the Hodge structure has been studied by many mathematicians. In
\cite{shimada}, I.~Shimada considered the total space of lines on a
general hypersurface and show that the associated cylinder
homomorphism is an isomorphism in many cases. In \cite{izadi},
E.~Izadi gave a Prym construction for the Hodge structure of cubic
hypersurfaces. Our result on cubic fourfolds gives such a
construction for the Chow group and the Brauer group as well.
Another advantage of our approach is that are potentially
applications of Theorem \ref{thm on prym construction} to other
situations. See \cite{pt} for more general Prym-Tjurin construction.

The plan of this article is as follows. In Section 2, we give a
detailed study of the action of $\sigma$ on $\HH^2(S,\Z)$. The main
results are summarized in Theorem \ref{thm on surface with
involution} and Theorem \ref{thm on surface with involution fixed
point free}. It turns out that the structure of this action is quite
sensitive to whether $\sigma$ has fixed points or not. In section 3,
we study the Prym part of the cohomology, Chow group and Brauer
group of $S$. Sections 4 and 5 are devoted the application to the
case of cubic fourfolds. Section 6 studies the case of conic bundles
over $\PP^3$.

\textbf{Notation and conventions.}

(1) Let $\Lambda$ be an integral Hodge structure of weight $2k$.
Then we use $\mathrm{Hdg}(\Lambda)=\Lambda^{k,k}\cap\Lambda$ to
denote the group of integral Hodge classes. If $\Lambda$ comes from
geometry, we also use
$\mathrm{Alg}(\Lambda)\subset\mathrm{Hdg}(\Lambda)$ to denote the
subgroup of algebraic classes. When $\Lambda$ is polarized (which
boils down to an integral symmetric bilinear form), we define
$\Lambda_\mathrm{tr}\subset\Lambda$ to be the orthogonal complement
of $\mathrm{Hdg}^{2k}(\Lambda)$. We also use $T(\Lambda)$ to denote
the quotient of $\Lambda$ by $\mathrm{Hdg}(\Lambda)$. If
$\Lambda=\HH^{2k}(Y,\Z)/\mathrm{tor}$ comes from geometry, we also
use $\mathrm{Hdg}^{2k}(Y)$, $\mathrm{Alg}^{2k}(Y)$ and $T^{2k}(Y)$
to denote $\mathrm{Hdg}(\Lambda)$, $\mathrm{Alg}(\Lambda)$ and
$T(\Lambda)$ respectively.

(2) In all sections except Section 5, we use $G=\{1,\sigma\}$ to
denote the cyclic group of order 2. For any abelian group $A$, we
use $A_{+}$ to denote the $G$-module $A$ on which $\sigma$ acts as
$+1$ and $A_{-}$ to denote the $G$-module $A$ on which $\sigma$ acts
as $-1$.

(3) Let $V$ be a vector space of dimension $n$, and $1\leq
r_1<\ldots< r_m<n$ a sequence of increasing integers. We use
$G(r_1,\ldots,r_m,V)$ to be the flag variety parameterizing flags
$V_1\subset\cdots\subset V_{m}\subset V$ with $\dim V_i=r_i$. If
$m=1$ and $r_1=1$, then we use $\PP(V)$ to denote $G(1,V)$. If we
replace $V$ by a vector bundle $\mathscr{E}$, then we define the
relative flag varieties in a similar way.

(4) For any morphism $f:Y\rightarrow X$ between smooth varieties
which is birational onto image, we define the normal bundle
$\mathscr{N}_{Y/X}$ of $Y$ in $X$ to be the cokernel of the
homomorphism $df:T_Y\rightarrow f^*T_X$.

\section{Surfaces with involution}
In this section, we fix $S$ to be a smooth projective surface over
$\C$. Assume that there is an involution $\sigma:S\to S$ with
finitely many isolated fixed points $x_1,\ldots,x_r\in S$. This
section is devoted to the study of the action of $\sigma$ on the
cohomology group $H^2(S,\Z)$.

Let $Y=S/\sigma$ be the quotient. Then $Y$ is a projective surface
with finitely many ordinary double points $y_1,\ldots,y_r\in Y$. Let
$\pi:S\to Y$ be the natural morphism of degree 2. We use $\tilde{S}$
to denote the blow up of $S$ at the points $x_1,\ldots,x_r$ with
$E'_i$ being the corresponding exceptional curves. We use
$\tilde{Y}$ to denote the minimal resolution of $Y$ with $E_i$ being
the exceptional divisors. Note that the $E_i$'s are $(-2)$-curves on
$\tilde{Y}$. Then $\sigma$ induces an involution, still denoted by
$\sigma$, on $\tilde{S}$ with all points in the $E'_i$'s being
fixed. One also easily sees that $\tilde{Y}$ is naturally the
quotient of $\tilde{S}$ by the action of $\sigma$. If we use
$\tilde{\pi}:\tilde{S}\to\tilde{Y}$ to denote the quotient map, then
we have the following commutative diagram,
$$
\xymatrix{\tilde{S}\ar[r]^g\ar[d]_{\tilde{\pi}} &S\ar[d]^{\pi}\\
   \tilde{Y}\ar[r]^f &Y}
$$
where $f$ and $g$ are the blow-up's.

On a complex analytic space $X$, we use $\mathcal{H}_p(X)$ to denote
the sheaf associated to
$$
U\longmapsto \HH_p(X,X-U).
$$
Note that the stalk of $\mathcal{H}_p(X)$ at smooth point $x\in X$
is, by excision, equal to $\HH_p(B,\partial B)$ where $B\subset X$
is a small closed ball centered at $x$. Hence $\mathcal{H}_p(X)$,
for $p\neq 2\dim_{\C} X$, is supported on the singular locus of $X$.

\begin{lem}\label{local homology lemma}
Let $Y$ be the quotient of $S$ by $\sigma$ as above, then

(i) $\mathcal{H}_p(Y)=0$, for $p\neq 2,4$.

(ii) $\mathcal{H}_2(Y)$ is the skyscraper sheaf supported on $y_i$'s
with $\mathcal{H}_2(Y)_{y_i}\cong\Z/2\Z$.

(iii) $\mathcal{H}_4(Y)$ is the constant sheave isomorphic to
$\HH_4(Y,\Z)\cong\Z$.
\end{lem}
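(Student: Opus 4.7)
The plan is to compute each stalk $\mathcal{H}_p(Y)_y$ directly and then to globalize. For $y \in Y$ and a small contractible analytic neighborhood $B$ of $y$, excision identifies $\mathcal{H}_p(Y)_y$ with $\HH_p(B, B\setminus\{y\})$; since $B$ is contractible, the long exact sequence of the pair gives $\mathcal{H}_p(Y)_y \cong \tilde{\HH}_{p-1}(B\setminus\{y\},\Z)$. It will therefore be enough to work out the homotopy type of $B\setminus\{y\}$ at each point.

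At a smooth point $y$, $B\setminus\{y\}$ is homotopy equivalent to $S^3$, so only $p=4$ contributes and gives $\Z$. At a singular point $y_i$, the isolated-fixed-point hypothesis forces the holomorphic tangent involution $d\sigma_{x_i}$ on $T_{x_i}S$ to have both eigenvalues equal to $-1$ (an eigenvalue $+1$ would linearize to a curve of fixed points through $x_i$). Hence $(Y,y_i)$ is analytically isomorphic to the $A_1$-singularity $(\C^2/\{\pm 1\}, 0)$, and $B\setminus\{y_i\}$ deformation retracts onto the link $S^3/\{\pm 1\} = \R P^3$. The standard computation $\tilde{\HH}_\bullet(\R P^3,\Z) = (0, \Z/2, 0, \Z)$ then gives $\mathcal{H}_2(Y)_{y_i} = \Z/2$, $\mathcal{H}_4(Y)_{y_i} = \Z$, and zero in all other degrees.

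Parts (i) and (ii) follow immediately from these calculations: the stalks of $\mathcal{H}_p(Y)$ vanish for $p\notin\{2,4\}$, and $\mathcal{H}_2(Y)$ has stalk $\Z/2$ precisely at the $y_i$, yielding the asserted skyscraper. For (iii), the stalk computation only shows $\mathcal{H}_4(Y)_y = \Z$ at every point of $Y$, which a priori gives nothing more than a locally constant sheaf with fiber $\Z$. To upgrade this to the constant sheaf $\underline{\Z}$ (with global sections $\HH_4(Y,\Z)\cong\Z$), I would identify $\mathcal{H}_4(Y)$ with the orientation sheaf of $Y$ regarded as a compact connected pseudomanifold of real dimension $4$: the orientation on the smooth locus coming from the complex structure extends across each ordinary double point because the link $\R P^3$ is an orientable $3$-manifold, and global connectedness of $Y$ then trivializes the locally constant sheaf.

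The main obstacle is precisely this last step in part (iii): the stalkwise argument on its own cannot distinguish a constant sheaf from a nontrivially locally constant one, so orientability of $Y$ as a pseudomanifold must be invoked. The remaining ingredients---excision, the analytic normal form $(\C^2/\{\pm 1\}, 0)$ at each $y_i$, and the reduced homology of $\R P^3$---are classical.
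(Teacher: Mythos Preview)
Your proof is correct and reaches the same conclusions, but the route differs from the paper's in one pleasant way. Both arguments reduce the stalk computation at a singular point to the homology of the link $\partial B_i$. You identify this link directly as $S^3/\{\pm 1\}=\R P^3$ via the linearized action of $\sigma$ at $x_i$, and then quote the homology of $\R P^3$. The paper instead passes to the minimal resolution $\tilde Y\to Y$, recognizes $\partial B_i$ as the unit circle bundle in $\mathscr{N}_{E_i/\tilde Y}\cong\calO_{\PP^1}(-2)$, and computes its cohomology by the Gysin sequence (with Euler class $-2$) followed by Poincar\'e duality. Of course this circle bundle is exactly the lens space $L(2,1)=\R P^3$, so the two computations coincide; yours is shorter, while the paper's description via the resolution is reused later (Remark~\ref{remark on Leray}) to compare Zeeman's spectral sequence with the Leray spectral sequence of $f:\tilde Y\to Y$, where the map $\beta:\HH^2(E_i)\to\HH^2(\partial B_i)$ is precisely the circle-bundle pullback.

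You are also more scrupulous than the paper about part~(iii): the paper simply asserts the constancy of $\mathcal{H}_4(Y)$ after the stalk computation, whereas you correctly observe that stalks alone give only a locally constant sheaf and invoke orientability of the pseudomanifold $Y$ (equivalently, of the link $\R P^3$) to trivialize it.
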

\begin{proof}
We know that $\mathcal{H}_p(Y)$ is zero at smooth points of $Y$ for
$p\neq 4$. By assumption, we can embed $Y$ in some projective space.
Let $B_i\subset Y$ be the intersection of $Y$ and a small closed
ball (in the ambient projective space) centered at $y_i$. By
construction, $B_i$ is contractible. By excision, we have
isomorphism
$$
\mathcal{H}_p(Y)_{y_i}\cong\HH_p(B_i,\partial B_i)
$$
Consider the long exact sequence associated to the pair
$(B_i,\partial B_i)$,
$$
\xymatrix{\cdots\ar[r] &\HH_p(\partial B_i)\ar[r] &\HH_p(B_i)\ar[r]
&\HH_p(B_i,\partial B_i)\ar[r] &\HH_{p-1}(\partial B_i)\ar[r]
&\cdots }
$$
where all cohomology groups have integral coefficients. Since $B_i$
is contractible, we have $\HH_p(B_i,\Z)=0$ for $p\neq 0$ and
$\HH_0(B_i,\Z)=\Z$. It follows that
$$
\HH_p(B_i,\partial B_i) =\HH_{p-1}(\partial B_i,\Z), \quad p=2,3,4
$$
and we also have $\HH_p(B_i,\partial B_i)=0$ for $p=0,1$. Let
$\tilde{B}_i\subset \tilde{Y}$ be the inverse image of $B_i$. Then
$\tilde{B}_i$ can be thought of as a tubular neighbourhood of
$E_i\cong\PP^1$ in $\tilde{Y}$. Then $\partial B_i=\partial
\tilde{B}_i$ is homeomorphic to the unit circle bundle over $E_i$
sitting inside $\mathscr{N}_{E_i/\tilde{Y}}\cong\calO_{\PP^1}(-2)$
(equipped with some hermitian metric). Hence $\partial B_i$ can be
viewed as a circle bundle over $\PP^1$ whose Euler class
$e_i=-2\in\HH^2(E_i,\Z)$. Consider the associated Gysin sequence
$$
\xymatrix{0=\HH^1(E_i)\ar[r] &\HH^1(\partial B_i)\ar[r]
&\HH^0(E_i)\ar[r]^{\cup e_i} &\HH^2(E_i)\ar[r] &\HH^2(\partial
B_i)\ar[r] &0}
$$
where all cohomology groups have integral coefficients. One easily
finds $\HH^1(\partial B_i,\Z)=0$ and $\HH^2(\partial
B_i,\Z)=\Z/2\Z$. Note that $\partial B_i$ is an orientable
3-dimensional compact manifold. By Poincar\'e duality, we get
$$
\HH_p(\partial B_i,\Z)=\begin{cases}
 0, &p=2\\
 \Z, &p=0,3\\
 \Z/2\Z, &p=1
\end{cases}
$$
Hence we have
$$
\mathcal{H}_p(Y)_{y_i}=\begin{cases}
 0, &p=0,1,3\\
 \Z/2\Z, &p=2\\
 \Z, &p=4
\end{cases}
$$
The lemma follows easily from this computation.
\end{proof}

On a complex analytic space $X$, we have Zeeman's spectral sequence
$$
E^2_{p,q}=\HH^q(X,\mathcal{H}_p(X))\Longrightarrow \HH_{p-q}(X,\Z)
$$
which measures the failure of Poincar\'e duality, see
\cite{zeeman1}\footnote{Make references more precise.}. The
differential $d^r$ goes as follows
$$
d^r:E^r_{p,q}\longrightarrow E^r_{p+r-1,q+r}
$$
The convergence means that there is a filtration
$$
H_s(X)=F_s^0\supseteq F_s^1\supseteq \cdots \supseteq
F_s^{n-s}\supseteq 0,\quad n=\dim_\R(X)
$$
such that $E^{\infty}_{s+q,q}\cong F^q_s/F^{q+1}_s$.

We apply Zeeman's spectral sequence to the singular surface $Y$, and
note that $E^2_{p,q}=0$ unless $p=4$ or $(p,q)=(2,0)$. This means
that $E^3=E^2$ and we have the only nontrivial differential map
$$
d^3:\HH^0(Y,\mathcal{H}_2(Y))\cong(\Z/2\Z)^r\rightarrow \HH^3(Y,\Z)
$$
We set
\begin{equation}\label{defn of M and N}
M=\ker(d^3),\qquad N=\mathrm{im}(d^3).
\end{equation}
Then from the spectral sequence, we get the following

\begin{prop}\label{failure of PD}
Let $Y$ be a projective surface with only isolated ordinary double
points $y_1,\ldots,y_r\in Y$ as above. Then the following statements
are true.

(i) $\cap [Y]:\HH^i(Y,\Z)\to\HH_{4-i}(Y,\Z)$ is an isomorphism for
$i=0,1,4$.

(ii) There is an exact sequence
$$
\xymatrix{
 0\ar[r] &\HH^2(Y)\ar[r]^{\cap[Y]} &\HH_2(Y)\ar[r] &\HH^0(Y,\mathcal{H}_2(Y))
 \ar[r]^{\quad d^3} &\HH^3(Y)\ar[r]^{\cap[Y]} &\HH_1(Y)\ar[r] &0
}
$$
where all (co)homology groups have integral coefficients.
\end{prop}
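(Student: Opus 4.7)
The plan is to apply Zeeman's spectral sequence $E^2_{p,q} = \HH^q(Y, \mathcal{H}_p(Y)) \Rightarrow \HH_{p-q}(Y, \Z)$ to $Y$ and read everything off using Lemma \ref{local homology lemma}. That lemma tells us $\mathcal{H}_p(Y) = 0$ for $p \ne 2, 4$, that $\mathcal{H}_4(Y)$ is the constant sheaf $\Z$, and that $\mathcal{H}_2(Y)$ is a skyscraper sheaf (so has vanishing cohomology in positive degrees). Therefore the only possibly nonzero entries of $E^2$ are
\begin{equation*}
E^2_{4,q} = \HH^q(Y, \Z)\ (0 \leq q \leq 4), \qquad E^2_{2,0} = \HH^0(Y, \mathcal{H}_2(Y)) \cong (\Z/2\Z)^r.
\end{equation*}

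The differential $d^2$ always lands in column $p = 3$ or $p = 5$, both identically zero, so $E^3 = E^2$. The only possibly nontrivial higher differential is then
\begin{equation*}
d^3 \colon E^3_{2,0} = \HH^0(Y, \mathcal{H}_2(Y)) \longrightarrow E^3_{4,3} = \HH^3(Y, \Z),
\end{equation*}
which is precisely the map in (\ref{defn of M and N}) with kernel $M$ and image $N$. For $r \geq 4$ the target of any $d^r$ from a nonzero position lies in column $p \geq 5$, hence vanishes. Thus $E^\infty_{2,0} = M$, $E^\infty_{4,3} = \HH^3(Y, \Z)/N$, and $E^\infty_{p,q} = E^2_{p,q}$ at all other surviving positions.

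Part (i) is then immediate from convergence: for each $s \in \{0, 3, 4\}$ the filtration on $\HH_s(Y, \Z)$ has exactly one nonzero graded piece, namely $\HH^{4-s}(Y, \Z)$, and the resulting edge map is the cap product by $[Y]$. For part (ii), the filtration on $\HH_2(Y, \Z)$ has top graded piece $M$ and bottom piece $\HH^2(Y, \Z)$ (the middle piece $E^\infty_{3,1}$ vanishes), producing the short exact sequence $0 \to \HH^2(Y, \Z) \to \HH_2(Y, \Z) \to M \to 0$; similarly the filtration on $\HH_1(Y, \Z)$ collapses to $\HH_1(Y, \Z) \cong \HH^3(Y, \Z)/N$. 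Splicing these with the tautological sequence $0 \to M \to \HH^0(Y, \mathcal{H}_2(Y)) \to N \to 0$ yields the six-term exact sequence of (ii).

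The main obstacle here is not the computation but a naturality check: one must verify that the edge maps produced by Zeeman's spectral sequence really coincide with cap product by the fundamental class $[Y]$, and that the connecting map $\HH_2(Y, \Z) \to \HH^0(Y, \mathcal{H}_2(Y))$ in (ii) is the composite of the filtration quotient with the inclusion $M \hookrightarrow \HH^0(Y, \mathcal{H}_2(Y))$. Both facts are built into Zeeman's construction, but the reference should be pinned down precisely (as the footnote in the excerpt already flags).
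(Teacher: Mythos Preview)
Your argument is correct and follows essentially the same route as the paper: compute the $E^2$ page of Zeeman's spectral sequence via Lemma~\ref{local homology lemma}, observe that the only nontrivial differential is $d^3\colon E^3_{2,0}\to E^3_{4,3}$, and read off the filtration on each $\HH_s(Y,\Z)$. The paper states this more tersely (the setup preceding the proposition \emph{is} the proof), but your more explicit unpacking of the filtration and the splicing is exactly what is being asserted, and your caveat about identifying the edge maps with $\cap[Y]$ matches the paper's own flagged footnote.
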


\begin{rmk}\label{remark on Leray}
If we split the above exact sequence in (ii), we get the following
short exact sequences,
\begin{equation}
\xymatrix{
 0\ar[r] & \HH^2(Y,\Z)\ar[r]^{\cap[Y]} &\HH_2(Y,\Z)\ar[r] &M\ar[r] &0
}
\end{equation}
and
\begin{equation}
\xymatrix{
 0\ar[r] &N\ar[r] & \HH^3(Y,\Z)\ar[r]^{\cap[Y]} &\HH_1(Y,\Z)\ar[r] &0
}
\end{equation}
Consider the Leray spectral sequence associated to the resolution
$f:\tilde{Y}\to Y$, i.e.
$$
E_2^{p,q}=\HH^p(Y,R^qf_*\Z)\Longrightarrow \HH^{p+q}(\tilde{Y},\Z).
$$
Note that
$$
R^qf_*\Z=\begin{cases}
 \Z, &q=0;\\
 0, &q=1;\\
 \oplus_{i=1}^r \HH^2(E_i,\Z)_{y_i}, &q=2.
\end{cases}
$$
From this we get that
\begin{equation}
f^*: \HH^i(Y,\Z)\to\HH^i(\tilde{Y},\Z)
\end{equation}
is an isomorphism for $i=0,1,4$. There is also an exact sequence
$$
\xymatrix{
 0\ar[r] &\HH^2(Y)\ar[r]^{f^*} &\HH^2(\tilde{Y})\ar[r]
 & \oplus_{i=1}^{r}\HH^2(E_i)\ar[r]^{\quad\alpha}
 &\HH^3(Y)\ar[r]^{f^*} &\HH^3(\tilde{Y})\ar[r] &0
}
$$
where all cohomology groups have integral coefficients. This
sequence is compatible with the one in (ii) of Proposition
\ref{failure of PD} in the following sense. Using Poincar\'e duality
on $\tilde{Y}$, we can define
$$
f_*:\HH^i(\tilde{Y},\Z)\cong \HH_{4-i}(\tilde{Y},\Z)\rightarrow
\HH_{4-i}(Y,\Z).
$$
Then $f_*f^*=\cap[Y]:\HH^i(Y,\Z)\rightarrow \HH_{4-i}(Y,\Z)$. If we
identify $\HH^0(Y,\mathcal{H}_2(Y))$ with
$\oplus_{r=1}^{r}\HH^2(\partial B_i,\Z)$ as in the proof of Lemma
\ref{local homology lemma}, then we have the following commutative
diagram
$$
\xymatrix{
 0\ar[r] &\HH^2(Y)\ar[r]^{f^*}\ar@{=}[d] &\HH^2(\tilde{Y})\ar[r]\ar[d]^{f_*}
 & \oplus_{i=1}^{r}\HH^2(E_i)\ar[r]^{\quad\alpha}\ar[d]^{\beta}
 &\HH^3(Y)\ar[r]^{f^*}\ar@{=}[d] &\HH^3(\tilde{Y})\ar[r]\ar[d]^{f_*} &0\\
 0\ar[r] &\HH^2(Y)\ar[r]^{\cap[Y]} &\HH_2(Y)\ar[r] &\oplus_{i=1}^{r}
 \HH^2(\partial B_i) \ar[r]^{\quad d^3} &\HH^3(Y)\ar[r]^{\cap[Y]}
 &\HH_1(Y)\ar[r] &0
}
$$
where all (co)homology groups have integral coefficients. Here the
homomorphism $\beta$ is nothing but the pull back induced from the
circle bundle structure $\partial B_i\to E_i$. The Gysin sequence
tells us that $\ker(\beta)$ is generated by the Euler classes
$e_i=-2\in\HH^2(E_i,\Z)$. By diagram chasing, we easily get that
$$f_*:\HH^2(\tilde{Y},\Z)\to\HH_2(Y,\Z)$$ is surjective and
$$f_*:\HH^3(\tilde{Y},\Z)\to\HH_1(Y,\Z)$$ is an isomorphism. Then it
follows that
\begin{equation}
N\cong \mathrm{im}(\alpha).
\end{equation}
\end{rmk}

Before moving on, we recall some basic facts about group cohomology.
Let $G$ be a finite group. The group cohomology functors
$\HH^i(G,-)$ are defined to be the right derived functors of the
functor
$$
M\mapsto M^G=\{x\in M: gx=x,\forall g\in G\}
$$
on the category of $G$-modules. If $G=\{1,\sigma\}$ is the cyclic
group of order $2$, then the group cohomology can be written
explicitly as
$$
\HH^i(G,M) =\begin{cases}
 M^G, & i=0;\\
 \frac{ \{x\in M: \sigma x=-x\} }{(\sigma-1)M}, & i\text{ is odd};\\
 \frac{ \{x\in M: \sigma x=x\} }{(\sigma+1)M}, &i \text{ is even}.
\end{cases}
$$
where $M$ is a $G$-module. This implies that $\HH^i(G,M)$ is of
2-torsion for all positive $i$. For example, if
$\alpha\in\HH^1(G,M)$, then $\alpha$ can be represented by
$\tilde{\alpha}\in M$ such that
$\sigma(\tilde{\alpha})=-\tilde{\alpha}$. Then
$2\tilde{\alpha}=\tilde{\alpha}-\sigma(\tilde{\alpha})\in
(\sigma-1)M$. This means that $2\alpha = 0$. We state this as a
lemma for future reference.
\begin{lem}\label{lem 2-torsion}
Let $G$ be the cyclic group of order 2 and $M$ a $G$-module, then
$\HH^i(G,M)$ is of 2-torsion for all $i>0$. If $M$ is a torsion group with no 2-torsion elements, then $\HH^i(G,M)=0$ for all $i>0$.
\end{lem}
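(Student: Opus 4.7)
The plan is to use the explicit description of $\HH^i(G,M)$ for $G$ cyclic of order $2$ that is displayed in the paragraph preceding the lemma statement. The first claim amounts essentially to the computation the author has already sketched, which treats the odd case: any class $\alpha\in\HH^i(G,M)$ is represented by some $\tilde\alpha\in M$ with $\sigma\tilde\alpha=-\tilde\alpha$, and then $2\tilde\alpha=\tilde\alpha-\sigma\tilde\alpha\in(\sigma-1)M$ shows $2\alpha=0$. For even $i>0$ the argument is entirely symmetric: a representative $\tilde\alpha$ satisfies $\sigma\tilde\alpha=\tilde\alpha$, so $2\tilde\alpha=\tilde\alpha+\sigma\tilde\alpha\in(\sigma+1)M$, and again $2\alpha=0$. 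This already gives that $\HH^i(G,M)$ is 2-torsion for every $i>0$.

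For the second statement, the key observation is that if $M$ is a torsion group with no 2-torsion elements, then multiplication by $2$ is an automorphism of $M$. Indeed, injectivity is exactly the no-2-torsion hypothesis; for surjectivity, every $x\in M$ has odd order $n$, so writing $1=2a+nb$ with $a,b\in\Z$ we get $x=2(ax)+n(bx)=2(ax)$, showing $x\in 2M$. Because the cohomology functor $\HH^i(G,-)$ is additive, multiplication by $2$ on $M$ induces multiplication by $2$ on $\HH^i(G,M)$, which is therefore also an automorphism. Combined with the first part, which says $2\cdot\HH^i(G,M)=0$, this forces $\HH^i(G,M)=0$ for all $i>0$.

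There is no real obstacle here; the whole proof is a couple of lines once one has written down the explicit formulas for $\HH^i(G,M)$ in the cyclic-of-order-2 case. The only mildly nontrivial point is noticing that in a torsion group without 2-torsion, multiplication by $2$ is not merely injective but also surjective, which is what lets us combine the divisibility property with the 2-torsion bound to conclude vanishing. Alternatively, one could decompose $M=\bigoplus_{p\text{ odd}}M_p$ into $p$-primary components and argue that $2$ acts invertibly on each summand, but the one-line argument above is cleaner.
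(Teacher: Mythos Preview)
Your proof is correct and matches the paper's approach for the first claim: the paper uses exactly the explicit description of $\HH^i(G,M)$ displayed just before the lemma and works out the odd case as you do, leaving the even case implicit. For the second claim the paper gives no argument at all---it simply records the statement for later use---so your observation that multiplication by $2$ is an automorphism of a torsion group without $2$-torsion, and hence induces an automorphism on cohomology, supplies what the paper omits.
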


The surface $S$ can be viewed as a space with $G$-action, where
$G=\{1,\sigma\}$ is the cyclic group of order 2. On the category of
$G$-sheaves on $S$, we have the $G$-invariant global section functor
$\Gamma^G(\calF)=\Gamma(S,\calF)^G$. The right derived functor
$R^p(\Gamma^G)(\calF)$ is also denoted by $\HH^p(G;S,\calF)$. Then
we have two natural spectral sequences attached to this situation,
\begin{equation}\label{Grothendieck spectral sequence I}
_IE_2^{p,q}=\HH^p(Y,R^q(\pi_*^G)\calF)\Longrightarrow\HH^{p+q}(G;S,\calF)
\end{equation}
and
\begin{equation}\label{Grothendieck spectral sequence II}
_{II}E_2^{p,q}=\HH^p(G,\HH^q(S,\calF))\Longrightarrow\HH^{p+q}(G;S,\calF)
\end{equation}
See chapter V of \cite{grothendieck} for more details. We apply the
first spectral sequence to the case $\calF=\Z$, and note that
$$
R^q(\pi^G_*)\Z=\begin{cases}
 \Z, &q=0;\\
 \oplus_{i=1}^{r}\HH^q(G,\Z_{x_i})_{y_i}, &q>0\text{ even};\\
 0, &\text{otherwise}.
\end{cases}
$$
It follows that all the differential homomorphisms
$$
d_{2}^{p,q}: _IE^{p,q}\longrightarrow _IE^{p+2, q-1}
$$
in the spectral sequence $_IE_2$ are zero except for
$$
d_2=d_2^{2,0}: \oplus_{i=1}^{r}\HH^2(G,\Z_{x_i}) \rightarrow
\HH^3(Y,\Z).
$$

\begin{lem}\label{first spectral lemma}
(i) There is a natural isomorphism $\HH^2(G,\Z_{x_i})\cong
\mathcal{H}_2(Y)_{y_i}$ such that the differential map $d_2$ is
identified with $d^3$ in Zeeman's spectral sequence.

(ii) $\HH^i(Y,\Z)\cong \HH^i(G;S,\Z)$ for $i=0,1$.

(iii) We have the following short exact sequences
\begin{align*}
\xymatrix{0\ar[r] &\HH^4(Y,\Z)\ar[r] &\HH^4(G;S,\Z)\ar[r] &\oplus \HH^4(G,\Z_{x_i})\ar[r] &0 }\\
\xymatrix{0\ar[r] &\HH^2(Y,\Z)\ar[r] &\HH^2(G;S,\Z)\ar[r] &M\ar[r] &0}\\
\xymatrix{0\ar[r] &N\ar[r] &\HH^3(Y,\Z)\ar[r] &\HH^3(G;S,\Z)\ar[r] &0}
\end{align*}

(iv) $\HH^i(G;S,\Z)\cong \HH_{4-i}(Y,\Z)$ for $i=0,1,2,3$.
\end{lem}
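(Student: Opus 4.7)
The plan is to read all four statements off the spectral sequence $_IE_2^{p,q}=\HH^p(Y,R^q\pi_*^G\Z)$ once its $E_2$-page has been pinned down. From the computation of $R^q\pi^G_*\Z$ recorded immediately above the lemma, the sheaves for $q>0$ are skyscrapers on the $y_i$'s, so $_IE_2^{p,q}=0$ whenever $p>0$ and $q>0$. The $q=0$ row is $\HH^p(Y,\Z)$ and the $p=0$ column is $\bigoplus_i\HH^q(G,\Z_{x_i})$ for even $q>0$. Hence in each total degree only the corners $(n,0)$ and $(0,n)$ can contribute, nothing maps into the $p=0$ column, and the only differential between two nonzero terms is the one displayed in the text, going from $\bigoplus_i\HH^2(G,\Z_{x_i})$ to $\HH^3(Y,\Z)$. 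Every $(0,q)$ with $q\geq 4$ survives to $E_\infty$ because its would-be target $\HH^{q+1}(Y,\Z)$ vanishes ($Y$ is a real $4$-fold).

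The real content is part (i), the identification of $d_2$ with Zeeman's $d^3$. I would argue locally at each fixed point $x_i$: choose a $G$-invariant Euclidean ball $U_i\subset S$ about $x_i$ on which $\sigma$ acts as $-1$, and let $V_i=U_i/G$. Both spectral sequences are functorial for open inclusions, so it suffices to match the two local differentials. Both sides are already $\Z/2\Z$: the source $\HH^2(G,\Z_{x_i})\cong\HH^2(G,\Z)$ by group cohomology, and the target $\mathcal{H}_2(Y)_{y_i}\cong\HH^1(\partial B_i,\Z)$ by the Gysin computation in the proof of Lemma \ref{local homology lemma}. The link $\partial B_i$ is the lens space $S^3/(\pm 1)$; its classifying map $\partial B_i\to BG$ pulls back the generator of $\HH^2(BG,\Z)$ to the same nonzero class in $\HH^2(\partial B_i,\Z)=\Z/2\Z$ that appears as the mod-$2$ Euler class in the Gysin sequence of the $S^1$-bundle $\partial B_i\to E_i$ of Euler number $-2$. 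This identifies the two $\Z/2$'s canonically, and functoriality transports the identification to the global differentials.

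Parts (ii)--(iv) are then bookkeeping. For $n=0,1$ only $(n,0)$ survives, yielding (ii). For $n=2,3,4$ the $E_\infty$-filtration of $\HH^n(G;S,\Z)$ gives the three short exact sequences of (iii), with $M=\ker d_2$ and $N=\mathrm{im}\,d_2$ by definition and by (i). For (iv), combining (ii) and (iii) with Proposition \ref{failure of PD} identifies subquotients of $\HH^*(Y,\Z)$ directly in cases $i=0,1,3$; in particular $\HH^3(G;S,\Z)$ and $\HH_1(Y,\Z)$ are both $\HH^3(Y,\Z)/N$. For $i=2$ the groups $\HH^2(G;S,\Z)$ and $\HH_2(Y,\Z)$ both sit in short exact sequences with end terms $\HH^2(Y,\Z)$ and $M$; a natural map between them over the identity on the ends is furnished by the edge map of the spectral sequence together with Poincar\'e duality on $S$ and the pushforward $\pi_*$, and the five lemma concludes. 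The local comparison in (i) is the only genuinely geometric ingredient; everything else is diagram chasing.
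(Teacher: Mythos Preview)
Your proposal is correct and follows essentially the same route as the paper: analyze the shape of the $_IE_2$-page, identify the single surviving differential, and read off (ii)--(iv) from the filtration together with Proposition~\ref{failure of PD}.

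Two small remarks. First, you have a slip: $\mathcal{H}_2(Y)_{y_i}\cong \HH_1(\partial B_i,\Z)$, not $\HH^1$ (the latter vanishes); this is harmless since you then correctly identify the group via $\HH^2(\partial B_i,\Z)$ anyway. Second, for (i) the paper argues slightly differently: rather than the classifying map $\partial B_i\to BG$, it applies \emph{both} spectral sequences $_IE$ and $_{II}E$ to the free $G$-cover $\pi_i:\partial B'_i=S^3\to\partial B_i$, obtaining the chain of natural isomorphisms
\[
\HH^2(\partial B_i,\Z)\;\cong\;\HH^2(G;\partial B'_i,\Z)\;\cong\;\HH^2(G,\HH^0(\partial B'_i,\Z))\;=\;\HH^2(G,\Z_{x_i}).
\]
Your classifying-map argument and this one are equivalent (the inclusion $\mathbb{RP}^3\hookrightarrow\mathbb{RP}^\infty$ is exactly the classifying map), but the paper's phrasing has the advantage that the identification of $d_2$ with $d^3$ then follows from functoriality of the spectral sequences themselves rather than from a separate compatibility check. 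For (iv) the paper is in fact terser than you are, simply invoking the comparison of $_IE$ with Zeeman's spectral sequence; your five-lemma argument for $i=2$ is a reasonable way to unpack that comparison.
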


\begin{proof}
(i) Let $B_i$ be the intersection of $Y$ and a small closed ball
centered at $y_i$ as before. Let $B'_i\subset S$ be the inverse
image of $B_i$. Then $B'_i$ is a small neighbourhood of $x_i\in S$
and $B'_i$ is homeomorphic to a 4-dimensional ball. The restriction
of $\pi$ to $\partial B'_i$ gives a covering map, $\pi_i:\partial
B'_i=S^3\to\partial B_i$, of degree 2. The spectral sequence $_IE$
for the covering map $\pi_i$ gives a natural isomorphism
$$
\HH^p(G;\partial B'_i,\Z)\cong \HH^p(\partial B_i,\Z),\quad\forall
p\geq 0.
$$
Apply the second spectral sequence $_{II}E$ to the covering map
$\pi_i$, we get
$$
\HH^2(G;\partial B'_i,\Z)\cong \HH^2(G,\HH^0(\partial B'_i,\Z))\cong \HH^2(G,\Z_{x_i})
$$
Hence we have a natural isomorphism $\HH^2(\partial B_i,\Z)\cong
\HH^2(G,\Z_{x_i})$. (ii) follows directly from the spectral sequence
$_IE$. To prove (iii) and (iv), we only need to compare the spectral
sequence $_IE$ with Zeeman's spectral sequence.
\end{proof}

From now on, we assume the following
\begin{assm}\label{vanishing cohomology assumption}
$\HH_1(S,\Z)=0$.
\end{assm}
One easily sees that the assumption above implies $\HH^1(S,\Z)=0$ by
the universal coefficient theorem and $\HH^3(S,\Z)=0$ by the
Poincar\'e duality. Consider the second spectral sequence $_{II}E$.
Since $\HH^q(S,\Z)=\Z$, $q=0,4$, with the trivial action of $G$, we
have
$$
\HH^p(G,\HH^q(S,\Z))=\begin{cases}
 \Z, & p=0;\\
 \Z/2\Z, &p=2k,k\geq 1;\\
 0, & p=2k-1, k\geq 1,
\end{cases}
$$
where $q=0,4$.

\begin{lem} \label{second spectral lemma}
Under the assumptions \ref{vanishing cohomology assumption}, the
following statements are true.

(i) $\HH^0(G;S,\Z)=\Z$ and $\HH^1(G;S,\Z)=0$.

(ii) There is a short exact sequence
$$
\xymatrix{0\ar[r] &\HH^2(G,\HH^0(S,\Z))=\Z/2\Z\ar[r]
&\HH^2(G;S,\Z)\ar[r] &\HH^2(S,\Z)^G\ar[r] &0}
$$

(iii) If $r\geq 1$, then $\HH^3(G;S,\Z)\cong\HH^1(G,\HH^2(S,\Z))$.

(iv) If $r\geq 1$, then there is a short exact sequence
$$
\xymatrix{ 0\ar[r] &\HH^4(G,\HH^0(S,\Z)) \ar[r] &
\HH^4(G;S,\Z)_{\mathrm{tor}} \ar[r] &\HH^2(G,\HH^2(S,\Z))\ar[r] &0 }
$$
\end{lem}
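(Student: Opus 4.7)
The plan is to read off everything from the spectral sequence $_{II}E$ after computing its $E_2$-page. By Assumption \ref{vanishing cohomology assumption} together with the universal coefficient theorem and Poincar\'e duality on $S$, one has $\HH^1(S,\Z)=\HH^3(S,\Z)=0$, while $\HH^0(S,\Z)=\HH^4(S,\Z)=\Z$ carry the trivial $G$-action (the holomorphic involution $\sigma$ preserves orientation). Combined with the explicit formula for $\HH^p(G,\Z)$ recalled just before the lemma, only the rows $q=0,2,4$ of the $E_2$-page are nonzero, and the two extreme rows read $\Z,0,\Z/2\Z,0,\Z/2\Z,\ldots$

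Parts (i) and (ii) should then be pure bookkeeping: for every bidegree in total degree at most $2$, either the source or the target of each possible $d_r$-differential is zero on $E_2$, so $E_2^{0,0}$, $E_2^{0,1}$, $E_2^{1,0}$, $E_2^{0,2}$ and $E_2^{2,0}$ all survive unchanged to $E_\infty$. The induced filtration on $\HH^{p+q}(G;S,\Z)$ in total degrees at most $2$ then yields (i) directly and produces the short exact sequence of (ii).

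The heart of the argument is the vanishing of the one potentially nontrivial differential that touches total degrees $3$ and $4$, namely $d_3:E_3^{1,2}\to E_3^{4,0}=\Z/2\Z$. I would deduce this from a retraction built from a fixed point. Since $x_1$ is $\sigma$-fixed, both the inclusion $\iota:\{x_1\}\hookrightarrow S$ and the constant map $c:S\to\{x_1\}$ are $G$-equivariant and satisfy $c\circ\iota=\mathrm{id}$. Functoriality of $\HH^*(G;-,\Z)$ then produces a composition $\HH^p(G,\Z)\xrightarrow{c^*}\HH^p(G;S,\Z)\xrightarrow{\iota^*}\HH^p(G,\Z)$ equal to the identity, which at the $E_\infty^{p,0}$-level becomes $\HH^p(G,\Z)\to E_\infty^{p,0}(S)\to\HH^p(G,\Z)$, again the identity. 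Since $E_\infty^{p,0}(S)$ is a quotient of $E_2^{p,0}(S)=\HH^p(G,\Z)$, this forces $E_\infty^{p,0}(S)=\HH^p(G,\Z)$; in particular $E_\infty^{4,0}=\Z/2\Z$, so the only differential that could touch it must be zero. I expect this retraction step to be the main obstacle, and it is the only place where the hypothesis $r\geq 1$ enters.

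Once $d_3$ is known to vanish, part (iii) is immediate: the only surviving contribution to $\HH^3(G;S,\Z)$ is $E_\infty^{1,2}$, whose remaining differentials are zero for degree reasons, so $E_\infty^{1,2}=\HH^1(G,\HH^2(S,\Z))$. For (iv), the same degree check yields $E_\infty^{2,2}=\HH^2(G,\HH^2(S,\Z))$, while the only possibly nonzero differential out of $(0,4)$ is $d_3:E_3^{0,4}=\Z\to E_3^{3,2}=\HH^3(G,\HH^2(S,\Z))$; since the target is $2$-torsion by Lemma \ref{lem 2-torsion}, its kernel is a nonzero subgroup of $\Z$ and hence still $\cong\Z$, so $E_\infty^{0,4}$ is torsion-free. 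The filtration on $\HH^4(G;S,\Z)$ therefore has $F^4=\Z/2\Z$, $F^2/F^4=\HH^2(G,\HH^2(S,\Z))$ and $F^0/F^2\cong\Z$; since the top quotient is torsion-free, $\mathrm{tor}\,\HH^4(G;S,\Z)=F^2$, and the short exact sequence $0\to F^4\to F^2\to F^2/F^4\to 0$ is precisely the sequence asserted in (iv).
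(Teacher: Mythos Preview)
Your argument is correct and follows essentially the same strategy as the paper: read off (i) and (ii) directly from the $E_2$-page, and for (iii)--(iv) reduce everything to showing that the edge map $\HH^4(G,\HH^0(S,\Z))\to\HH^4(G;S,\Z)$ is injective (equivalently, that the differential $d_3:E_3^{1,2}\to E_3^{4,0}$ vanishes). The paper establishes this injectivity by composing with the restriction map $\HH^4(G;S,\Z)\to\bigoplus_i\HH^4(G,\Z_{x_i})$ produced by the \emph{first} spectral sequence (Lemma~\ref{first spectral lemma}), whereas you obtain it by functoriality of the \emph{second} spectral sequence applied to the $G$-equivariant retraction $S\to\{x_1\}\hookrightarrow S$ onto a single fixed point. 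Both arguments are the same idea---a fixed point splits off the bottom row of the spectral sequence---but your version is slightly more self-contained, since it does not appeal to Lemma~\ref{first spectral lemma}; the paper's version, on the other hand, makes the link between the two spectral sequences explicit, which is used again in the proof of Theorem~\ref{thm on surface with involution}.
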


\begin{proof}
This is essentially a restatement of the spectral sequence $_{II}E$.
(i) and (ii) are easy. For (iii) and (iv), we note that the spectral
sequence gives an exact sequence
$$
\xymatrix{
 0\ar[r] &\HH^3(G;S,\Z)\ar[r] &\HH^1(G,\HH^2(S,\Z))\ar[r]^\rho & \HH^4(G,\HH^0(S,\Z))\ar[r]^\tau &\HH^4(G;S,\Z)
}
$$
The composition of the natural map
$\HH^4(G;S,\Z)\to\oplus_{i=1}^{r}\HH^4(G,\Z_{x_i})$ (see Lemma
\ref{first spectral lemma}) with $\tau$ is the natural map induced
by the restriction homomorphism $\HH^4(S,\Z)\rightarrow \oplus
\Z_{x_i}$. One sees easily that this composition is nonzero. It
follows that $\tau$ is nonzero and hence injective. Thus $\rho=0$.
Then (iii) and (iv) are easily deduced from the spectral sequence.
\end{proof}

\begin{thm}\label{thm on surface with involution}
Let $S$ be a smooth complex algebraic surface with an involution
$\sigma:S\to S$. Assume that $\sigma$ has finitely many fixed points
$x_1,\ldots,x_r\in S$, $r\geq 1$. Let $Y=S/\sigma$ be the quotient
of $S$ by the involution. If $\HH_1(S,\Z)=0$, then the following are
true.

(i) The homology groups of $Y$ are given by
\begin{align*}
&\HH_0(Y,\Z)=\Z,\qquad \HH_1(Y,\Z)=0, \qquad \HH_2(Y,\Z)=\Z/2\Z\oplus \HH^2(S,\Z)^G\\
 &\HH_3(Y,\Z)=0,  \qquad \HH_4(Y,\Z)=\Z.
\end{align*}

(ii) The cohomology groups of $Y$ are given by
\begin{align*}
&\HH^0(Y,\Z)=\Z,\qquad \HH^1(Y,\Z)=0, \qquad \HH^2(Y,\Z)\subset \HH^2(S,\Z)^G\\
 &\HH^3(Y,\Z)=\Z/2\Z,  \qquad \HH^4(Y,\Z)=\Z.
\end{align*}

(iii) The failure of Poincar\'e duality on $Y$ are measured by
\begin{align*}
M=\mathrm{coker}\{\cap[Y]:\HH^2(Y,\Z)\to\HH_2(Y,\Z)\} &\cong(\Z/2\Z)^{r-1},\\
N=\ker\{\cap[Y]:\HH^3(Y,\Z)\to\HH_1(Y,\Z)\} &\cong \Z/2\Z.
\end{align*}

(iv) We always have $r\geq 2$ and
$\mathrm{coker}\{\HH^2(Y,\Z)\rightarrow\HH^2(S,\Z)^G\}\cong
(\Z/2\Z)^{r-2}$.

(v)\,\, $\HH^1(G,\HH^2(S,\Z))=0$ and
$\HH^2(G,\HH^2(S,\Z))\cong(\Z/2\Z)^{r-2}$.

(vi) We have the following isomorphism as $G$-modules
$$
\HH^2(S,\Z)\cong \Z[G]^{r_0}\oplus \Z_{+}^{r-2}.
$$
where $r_0=\frac{h^2(S) -r+2}{2}$.
\end{thm}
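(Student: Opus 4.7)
The plan is to combine the Diederichsen--Reiner classification of $\Z[G]$-lattices for $G=\Z/2\Z$ with the group-cohomological computations already collected in part (v) and a rank count.

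First, I would check that $\HH^2(S,\Z)$ is torsion-free. This follows from the universal coefficient theorem, which gives
$$\HH^2(S,\Z) \cong \Hom(\HH_2(S,\Z),\Z) \oplus \mathrm{Ext}^1(\HH_1(S,\Z),\Z),$$
together with Assumption \ref{vanishing cohomology assumption}, which kills the $\mathrm{Ext}^1$ term. Hence $\HH^2(S,\Z)$ is a $\Z[G]$-lattice of rank $h^2(S)=\rk\HH^2(S,\Z)$.

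Next, I would invoke the classical theorem of Diederichsen and Reiner: for $G=\Z/2\Z$, every $\Z[G]$-module that is finitely generated and free as an abelian group decomposes as
$$\Z[G]^{a}\oplus\Z_{+}^{b}\oplus\Z_{-}^{c}$$
for uniquely determined non-negative integers $a$, $b$, $c$. To pin these down, I would compute the group cohomology of each indecomposable summand from the explicit formulas recalled just before Lemma \ref{lem 2-torsion}: Shapiro's lemma yields $\HH^i(G,\Z[G])=0$ for $i>0$; from $\sigma=+1$ on $\Z_+$ one finds $\HH^1(G,\Z_+)=0$ and $\HH^2(G,\Z_+)=\Z/2\Z$; and from $\sigma=-1$ on $\Z_-$ one finds $\HH^1(G,\Z_-)=\Z/2\Z$ and $\HH^2(G,\Z_-)=0$.

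Applying part (v), the vanishing $\HH^1(G,\HH^2(S,\Z))=0$ forces $c=0$, and $\HH^2(G,\HH^2(S,\Z))\cong(\Z/2\Z)^{r-2}$ gives $b=r-2$. The rank identity $2a+b+c=h^2(S)$ then yields $a=(h^2(S)-r+2)/2=r_0$, producing the claimed $G$-module isomorphism.

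The only nontrivial external input is the Diederichsen--Reiner classification of integral representations of $\Z/2\Z$; once this is cited, everything else is a straightforward matching of the cohomological invariants computed in part (v) against those of the three indecomposable lattices, together with counting ranks. The main conceptual point to highlight is that the vanishing of $\HH^1(G,\HH^2(S,\Z))$ is precisely what rules out any $\Z_-$ summand and so makes the decomposition as clean as stated.
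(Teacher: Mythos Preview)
Your proposal addresses only part (vi), taking (v) as input; for that part, your argument is correct and is essentially the paper's own. The paper deduces (vi) from part (v) via Lemma~\ref{structure lemma on G-modules}, which is precisely the Diederichsen--Reiner classification for $G=\Z/2\Z$ together with the identification $\HH^1(G,\Lambda)\cong(\Z/2\Z)^{r_-}$, $\HH^2(G,\Lambda)\cong(\Z/2\Z)^{r_+}$---the only difference is that the paper proves this structure lemma from scratch rather than citing it, so your route is marginally shorter at the cost of an external reference.
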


\begin{proof}
First we note that the assumption $\HH_1(S,\Z)=0$ implies that
$\HH^1(S,\Z)=0$, $\HH^2(S,\Z)$ is torsion free (the universal
coefficients) and $\HH^3(S,\Z)=0$ (the Poincar\'e duality). Since
the composition of the natural maps
$$
\varphi:\HH^2(G,\HH^0(S,\Z))\rightarrow \HH^2(G;S,\Z)\rightarrow
\oplus_{i=1}^{r}\HH^2(G,\Z_{x_i})
$$
comes from the natural diagonal/restriction map
$\HH^0(S,\Z)=\Z\rightarrow \oplus \Z_{x_i}=\Z^r$, it follows that
$\varphi$ is injective if $r\geq 1$. Actually, under the
isomorphisms $\HH^2(G,\HH^0(S,\Z))\cong\Z/2\Z$ and
$\oplus\HH^2(G,\Z_{x_i})\cong(\Z/2\Z)^r$, the homomorphism $\varphi$
is nothing but the diagonal map. Consider the diagram
\begin{equation}
\xymatrix{
 & &(\Z/2\Z)^r & &\\
0\ar[r] &\Z/2\Z\ar[r]\ar[ur]^{\varphi} &\HH_2(Y,\Z) \ar[u]\ar[r] &\HH^2(S,\Z)^G\ar[r] &0\\
 & &\HH^2(Y,\Z)\ar@{^(->}[u]\ar[ur]_{\pi^*} & &
}
\end{equation}
One easily sees that the injectivity of $\varphi$ implies that of
$\pi^*$. Hence $\HH^2(Y,\Z)$ is torsion free.

Now we start to prove (i). By (iv) of Lemma \ref{first spectral
lemma} and (i) of Lemma \ref{second spectral lemma}, we get
$$
\HH^1(Y,\Z)=\HH_3(Y,\Z)=\HH^1(G;S,\Z)=0
$$
Similarly, by (iv) of Lemma \ref{first spectral lemma} and (ii) of
Lemma \ref{second spectral lemma}, we get
$$
\HH_2(Y,\Z)=\HH^2(G;S,\Z)=\HH^2(S,\Z)^G\oplus\Z/2\Z
$$
We also note that (ii) of Lemma \ref{first spectral lemma}, together
with (i) of Lemma \ref{second spectral lemma}, implies that
$\HH^1(Y,\Z)=0$. We have already seen that $\HH^2(Y,\Z)$ is torsion
free. Hence the universal coefficient theorem tells us that
$\HH_1(Y,\Z)=0$.

The conclusions in (ii) follow from (i) by the universal coefficient
theorem.

To prove (iii), we first note that $\HH_1(Y,\Z)=0$, together with
the exact sequence in (ii) of Proposition \ref{failure of PD},
implies that $N=\HH^3(Y,\Z)=\Z/2\Z$. Since $M$ and $N$ fit into the
following short exact sequence by \eqref{defn of M and N},
$$
\xymatrix {0\ar[r] &M\ar[r]
&\oplus\mathcal{H}(Y)_{y_i}=(\Z/2\Z)^r\ar[r] &N\ar[r] &0, }
$$
we get $M\cong(\Z/2\Z)^{r-1}$.

To prove (iv), we consider the following diagram
$$\xymatrix{
0\ar[r] &\Z/2\Z\ar[r]^{\varphi'}\ar@{=}[d] &M\ar[r] &(\Z/2\Z)^{r-2}\ar[r] &0\\
0\ar[r] &\Z/2\Z\ar[r] &\HH_2(Y,\Z) \ar[u]\ar[r] &\HH^2(S,\Z)^G\ar[r]\ar[u] &0\\
 & &\HH^2(Y,\Z)\ar[u]\ar@{=}[r] &\HH^2(Y,\Z)\ar[u]_{\pi^*} &
}
$$
Note that $\HH^2(Y,\Z)$ is torsion free. This forces $\varphi'$ to
be injective. Hence $M\neq 0$ and $r\geq 2$. The last column shows
that $\mathrm{coker}(\pi^*)\cong(\Z/2\Z)^{r-2}$.

By (iii) of Lemma \ref{second spectral lemma}, we get
$$
\HH^1(G,\HH^2(S,\Z))=\HH^3(G;S,\Z)=\HH_1(Y,\Z)=0.
$$
This proves the first half of (v). Consider the following diagram
$$\xymatrix{
0\ar[r] &\HH^4(Y,\Z)\ar[r]\ar@{=}[d] &\HH^4(S,\Z)^G\ar[r] &\Z/2\Z\ar[r] &0\\
0\ar[r] &\HH^4(Y,\Z)\ar[r] &\HH^4(G;S,\Z) \ar[u]\ar[r] &\oplus\HH^4(G,\Z_{x_i})\ar[r]\ar[u] &0\\
 & &\HH^4(G;S,\Z)_{\mathrm{tor}}\ar[u]\ar@{=}[r] &\HH^4(G;S,\Z)_{\mathrm{tor}}\ar[u] &
}
$$
We easily get $\HH^4(G;S,\Z)_{\mathrm{tor}}\cong(\Z/2\Z)^{r-1}$. By
(iv) of Lemma \ref{second spectral lemma}, we get
$$
\HH^2(G,\HH^2(S,\Z))=\mathrm{coker}\{\Z/2\Z\rightarrow
\HH^4(G;S,\Z)_{\mathrm{tor}}\}=(\Z/2\Z)^{r-2}
$$
This proves the second half of (v).

The statement (vi) follows from Lemma \ref{structure lemma on
G-modules} on structure of $G$-modules.
\end{proof}

We will be mainly interested in the case when $\sigma$ has fixed
points, but to complete the picture, we state the corresponding
results on the case where $\sigma$ is fixed point free.
\begin{thm}\label{thm on surface with involution fixed point free}
Let $S$ be a smooth projective surface with $\HH_1(S,\Z)=0$. Let
$\sigma:S\rightarrow S$ be an involution that has no fixed point and
$G=\{1,\sigma\}$. Let $\pi:S\rightarrow Y$ be the quotient morphism,
which is \'etale
of degree 2. Then the following are true.\\
(i) The homology groups of $Y$ are given by
\begin{align*}
&\HH_0(Y,\Z)=\Z;\qquad \HH_1(Y,\Z)=\Z/2\Z;\qquad
\HH_2(Y,\Z)=\HH_2(S,\Z)^G\oplus\Z/2\Z;\\
&\HH_3(Y,\Z)=0;\qquad \HH_4(Y,\Z)=\Z.
\end{align*}
(ii) The cohomology groups of $Y$ are given by
\begin{align*}
&\HH^0(Y,\Z)=\Z;\qquad \HH^1(Y,\Z)=0;\qquad
\HH^2(Y,\Z)=\HH^2(S,\Z)^G\oplus\Z/2\Z;\\
&\HH^3(Y,\Z)=\Z/2\Z;\qquad \HH^4(Y,\Z)=\Z.
\end{align*}
(iii) As a $G$-module, the group cohomology of $\HH^2(S,\Z)$ is
given by the following
$$
\HH^1(G,\HH^2(S,\Z))=(\Z/2\Z)^2;\qquad\HH^2(G,\HH^2(S,\Z))=0.
$$
(iv) We have the following isomorphism of $G$-modules
$$
\HH^2(S,\Z)=\Z[G]^{r_0}\oplus\Z_{-}^2.
$$
where $r_0=\frac{h^2(S)-2}{2}$. In particular, the second Betti
number of $S$ is even.
\end{thm}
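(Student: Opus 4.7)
The plan is to follow the strategy of Theorem~\ref{thm on surface with involution}, taking advantage of the simplification that $\pi:S\to Y$ is now \'etale: $Y$ is itself a smooth projective (hence closed orientable) complex surface; the first spectral sequence \eqref{Grothendieck spectral sequence I} has $R^q(\pi_*^G)\Z=0$ for $q>0$ because there are no singular stalks; and consequently $\HH^p(G;S,\Z)\cong\HH^p(Y,\Z)$ in every degree. Everything therefore reduces to the second spectral sequence \eqref{Grothendieck spectral sequence II},
$$E_2^{p,q}=\HH^p(G,\HH^q(S,\Z))\Longrightarrow\HH^{p+q}(Y,\Z),$$
with input $\HH^0(S,\Z)=\HH^4(S,\Z)=\Z$ (trivial $G$-action, as $\sigma$ is holomorphic and hence orientation-preserving), $\HH^1(S,\Z)=\HH^3(S,\Z)=0$, and $M:=\HH^2(S,\Z)$ torsion-free. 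Rows $q=1,3$ vanish, so $E_2=E_3$; the only potentially nonzero higher differentials are $d_3:E_3^{p,2}\to E_3^{p+3,0}$ and $d_3:E_3^{p,4}\to E_3^{p+3,2}$, and nothing survives for $r\geq 4$, so $E_4=E_\infty$.

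The decisive input is Poincar\'e duality on the smooth orientable connected $4$-manifold $Y$, giving $\HH^4(Y,\Z)=\Z$. The successive quotients of the $E_\infty$-filtration in total degree $4$ are $E_\infty^{0,4}=\ker\bigl(d_3:\Z\to\HH^3(G,M)\bigr)\subset\Z$, $E_\infty^{2,2}=\HH^2(G,M)$ (which is $2$-torsion), and $E_\infty^{4,0}=\mathrm{coker}\bigl(\delta:\HH^1(G,M)\to\Z/2\Z\bigr)$. The only way to assemble $\Z$ from such an extension is
$$E_\infty^{0,4}=\Z,\qquad E_\infty^{2,2}=0,\qquad E_\infty^{4,0}=0,$$
which simultaneously yields the vanishing of $d_3:\Z\to\HH^3(G,M)$, the identity $\HH^2(G,M)=0$, and the surjectivity of $\delta$. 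Reading off the filtration in total degree $2$ then gives the short exact sequence
$$0\to\Z/2\Z\to\HH^2(Y,\Z)\to\HH^2(S,\Z)^G\to 0,$$
which splits because the quotient is free abelian. For $\HH^3(Y,\Z)=\ker(\delta)$, the transfer with $\Q$-coefficients shows $\HH^3(Y,\Q)=\HH^3(S,\Q)^G=0$, so $\HH^3(Y,\Z)$ is pure torsion; universal coefficients together with Poincar\'e duality on $Y$ identify this torsion with $\HH^2(Y,\Z)_{\mathrm{tor}}=\Z/2\Z$. Hence $\HH^3(Y,\Z)=\Z/2\Z$ and $|\ker(\delta)|=2$, forcing $\HH^1(G,M)=(\Z/2\Z)^2$.

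Finally, Lemma~\ref{structure lemma on G-modules} lets me write $M=\Z[G]^{r_0}\oplus\Z_+^a\oplus\Z_-^b$; the computed values $\HH^1(G,M)=(\Z/2\Z)^b=(\Z/2\Z)^2$ and $\HH^2(G,M)=(\Z/2\Z)^a=0$ force $b=2$ and $a=0$, and the rank equation $2r_0+a+b=h^2(S)$ produces $r_0=(h^2(S)-2)/2$, proving (iv) and the parity of $h^2(S)$. The homology statements in (i) then follow from (ii) via universal coefficients and Poincar\'e duality on $Y$. The main technical hurdle will be the careful use of Poincar\'e duality on $Y$ to constrain the $E_\infty$-filtration on $\HH^4(Y,\Z)$ and thereby rule out the alternative extension patterns; once that is in hand, the remaining assertions follow by routine spectral sequence, transfer, and structure-lemma arguments.
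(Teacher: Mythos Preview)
Your argument is correct and follows essentially the same route as the paper: identify $\HH^*(G;S,\Z)$ with $\HH^*(Y,\Z)$ via the degeneration of the first spectral sequence for the free action, then analyze the Hochschild--Serre spectral sequence $_{II}E$, using Poincar\'e duality on the smooth $Y$ (specifically the torsion-freeness of $\HH^4(Y,\Z)=\Z$) to force $\HH^2(G,\HH^2(S,\Z))=0$ and the surjectivity of $\delta$, and then compute $\HH^3(Y,\Z)=\Z/2\Z$ via universal coefficients and Poincar\'e duality to finish (iii) and hence (iv) by Lemma~\ref{structure lemma on G-modules}. The only cosmetic difference is packaging: the paper phrases the degree-$4$ analysis as an exact sequence (and obtains $\HH^3(Y,\Z)$ by first computing $\HH_1(Y,\Z)$), whereas you read off the $E_\infty$-filtration directly and reach $\HH^3(Y,\Z)$ through $\HH_2(Y,\Z)_{\mathrm{tor}}$; the content is identical.
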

\begin{proof}
The natural quotient morphism $\pi:S\rightarrow Y$ is \'etale. This
implies that
$$
R^q\pi^G_*\Z=\begin{cases}
 \Z, &q=0;\\
 0, &q>0.
\end{cases}
$$
Hence the first spectral sequence $_IE_2$ of \eqref{Grothendieck
spectral sequence I} degenerates and we have
$$
\HH^i(G;S,\Z)=\HH^i(Y,\Z),\;\forall i\geq 0.
$$
By (i) of Lemma \ref{second spectral lemma}, we get $\HH^0(Y,\Z)=\Z$
and $\HH^1(Y,\Z)=0$. By (ii) of Lemma \ref{second spectral lemma},
we get $\HH^2(Y,\Z)=\HH^2(S,\Z)^G\oplus\Z/2\Z$. Then by the
universal coefficient theorem for cohomology, we know that
$\HH_1(Y,\Z)=\Z/2\Z$. This implies that $\HH^3(Y,\Z)=\Z/2\Z$ by
Poincar\'e duality on $Y$. This finishes the proof of statement
(ii). Then (i) follows automatically by Poincar\'e duality. The
spectral sequence $_{II}E$ in \eqref{Grothendieck spectral sequence
II} gives an exact sequence
$$
\xymatrix{
 0\ar[r] &\HH^3(Y,\Z)\ar[r] &\HH^1(G,\HH^2(S,\Z))\ar[r]
 &\HH^4(G,\HH^0(S,\Z))\ar[r]^\alpha &\HH^4(Y,\Z)=\Z
}
$$
Since $\HH^4(G,\HH^0(S,\Z))=\Z/2\Z$, the last homomorphism $\alpha$
is zero. Then we can conclude that $\HH^1(G,\HH^2(S,\Z))=(\Z/2\Z)^2$
since this group is always $2$-torsion by Lemma \ref{lem 2-torsion}.
The above long exact sequence goes further as
$$
\xymatrix{
 0\ar[r] &\HH^2(G,\HH^2(S,\Z))\ar[r] &\HH^4(Y,\Z)\ar[r]
 &\HH^4(S,\Z)\ar[r] &\Z/2\Z\ar[r] &0.
}
$$
Since $\HH^4(Y,\Z)$ is torsion free, $\HH^2(G,\HH^2(S,\Z))$ has to
be zero since it is always 2-torsion by Lemma \ref{lem 2-torsion}.
This proves (iii). The statement (iv) follows from the following
lemma.
\end{proof}

\begin{lem}\label{structure lemma on G-modules}
Let $G$ be the cyclic group of order 2 with generator $\sigma$.
Assume that $G$ acts on a finitely generated free abelian group
$\Lambda$. Then we have
$$
\Lambda=\Z[G]^{r_0}\oplus \Z_{+}^{r_+}\oplus \Z_{-}^{r_-}
$$
as $G$-modules, where $\Z_{-}$ is the abelian group $\Z$ on which
$\sigma$ acts as multiplication by $-1$ and $\Z_{+}$ is the trivial
$G$-module. In particular, $\mathrm{rk}(\Lambda)=2r_0+r_{+}+r_{-}$,
$\HH^1(G,\Lambda)\cong(\Z/2\Z)^{r_{-}}$ and
$\HH^2(G,\Lambda)=(\Z/2\Z)^{r_+}$.
\end{lem}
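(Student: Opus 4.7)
My plan is to prove the decomposition of $\Lambda$ into indecomposable $G$-modules and then deduce the cohomology formulas by additivity. The cohomology part is immediate from Lemma \ref{lem 2-torsion}: direct computation gives $\HH^1(G,\Z_+)=0$, $\HH^2(G,\Z_+)=\Z/2\Z$, $\HH^1(G,\Z_-)=\Z/2\Z$, $\HH^2(G,\Z_-)=0$, and $\HH^i(G,\Z[G])=0$ for $i>0$ (the latter by Shapiro's lemma, or from the identities $\ker(\sigma\mp 1)=(\sigma\pm 1)\Z[G]=\Z(1\pm\sigma)$). Additivity over summands then yields the stated formulas.

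For the decomposition, set $\Lambda^+=\Lambda^G$ and $\Lambda^-=\{x\in\Lambda:\sigma x=-x\}$; both are saturated with $\Lambda^+\cap\Lambda^-=0$ by torsion-freeness, so $L:=\Lambda^+\oplus\Lambda^-$ embeds in $\Lambda$. The identity $2x=(x+\sigma x)+(x-\sigma x)$ makes $\Lambda/L$ a finite-dimensional $\F_2$-vector space of some dimension $r_0$. The crucial observation is that the map $\bar\pi_+:\Lambda/L\to\Lambda^+/2\Lambda^+$ induced by $x\mapsto x+\sigma x$ is injective: if $x+\sigma x=2u$ with $u\in\Lambda^+$, then $x-u\in\Lambda^-$, forcing $x\in L$. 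The analogous map $\bar\pi_-$ into $\Lambda^-/2\Lambda^-$ is similarly injective.

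Given this, pick $x_1,\ldots,x_{r_0}\in\Lambda$ whose classes $\F_2$-span $\Lambda/L$, and set $u_i:=x_i+\sigma x_i$, $v_i:=x_i-\sigma x_i$. By injectivity of $\bar\pi_\pm$, the images $\bar u_i$ (resp.\ $\bar v_i$) are $\F_2$-linearly independent in $\Lambda^+/2\Lambda^+$ (resp.\ $\Lambda^-/2\Lambda^-$). Modifying $x_i$ by an element of $\Lambda^+$ changes $u_i$ by an arbitrary element of $2\Lambda^+$ while leaving $v_i$ fixed, and symmetrically for $\Lambda^-$; exploiting this independent control together with a standard inductive lifting argument (lift one independent class at a time using primitive representatives in direct-summand complements), we can arrange that $\{u_i\}$ extends to a $\Z$-basis $\{u_i,w_j\}$ of $\Lambda^+$ and $\{v_i\}$ extends to a $\Z$-basis $\{v_i,w'_k\}$ of $\Lambda^-$. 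Set $M:=\sum_i(\Z x_i+\Z\sigma x_i)\cong\Z[G]^{r_0}$ and $\Lambda':=\bigoplus_j\Z w_j\oplus\bigoplus_k\Z w'_k\cong\Z_+^{r_+}\oplus\Z_-^{r_-}$. Then $M\cap\Lambda'=0$ by $\Q$-disjointness of rational spans, and $M+\Lambda'\supseteq L+\sum_i\Z x_i=\Lambda$ by the choice of $x_i$, which yields $\Lambda=M\oplus\Lambda'$ as $G$-modules. The main obstacle is this lifting step, which is standard but requires care to carry out simultaneously on both the $+$ and $-$ sides.
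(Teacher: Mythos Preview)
Your argument is correct and takes a genuinely different route from the paper's. Both proofs begin with $\Lambda^+\oplus\Lambda^-\subset\Lambda$ and the observation that the quotient $Q$ is an $\F_2$-vector space, but they diverge from there. The paper peels off one copy of $\Z[G]$ at a time: it picks a lift $\tilde q$ of a nonzero class in $Q$ chosen so that $(1+\sigma)\tilde q$ and $(1-\sigma)\tilde q$ are both primitive, checks directly that $\Z[G]\hookrightarrow\Lambda$ via $1\mapsto\tilde q$ has torsion-free cokernel $\Lambda'$, and then splits the resulting short exact sequence using the vanishing $\mathrm{Ext}^1_G(\Lambda',\Z[G])=0$ (reduced to $\mathrm{Ext}^1_G(\Z_\pm,\Z[G])=0$). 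Induction on $\dim_{\F_2}Q$ finishes the job.

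Your approach is more global and more elementary: you treat all $r_0$ copies at once, the key input being the injectivity of $\bar\pi_\pm$ together with the fact that any $\F_2$-independent family in $\Lambda^\pm/2\Lambda^\pm$ can be lifted to part of a $\Z$-basis of $\Lambda^\pm$ (which follows from the surjectivity of $GL_n(\Z)\to GL_n(\F_2)$; this is what your ``standard inductive lifting argument'' amounts to, and it would help to say so explicitly). This avoids any Ext computation, at the cost of that lifting step being stated somewhat informally. The paper's approach makes the splitting manifestly canonical-up-to-choice and isolates the homological content; yours is closer to a bare-hands classification of integral representations of $C_2$. Either is perfectly adequate here.
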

\begin{proof}
Let $B=\ker(\sigma-1)=\Lambda^G\subset\Lambda$ and
$P=\ker(\sigma+1)\subset\Lambda$. Consider the exact sequence,
$$
\xymatrix{ 0\ar[r] &B\oplus P\ar[r] &\Lambda\ar[r] &Q\ar[r] &0}
$$
For each element $q\in Q$, let $\tilde{q}\in \Lambda$ be a lift of
$q$. Since
$$
2\tilde{q}=(1+\sigma)\tilde{q}+(1-\sigma)\tilde{q}\in B+P,
$$
we see that $Q$ is of 2-torsion. If we write
$b_0=(1+\sigma)\tilde{q}\in B$ and $p_0=(1-\sigma)\tilde{q}\in Q$,
then we have $2\tilde{q}=b_0+p_0$. We may choose $\tilde{q}$ such
that both $b_0$ and $q_0$ are primitive. Let
$\varphi:\Z[G]\to\Lambda$ be the $G$-module homomorphism with
$\varphi(1)=\tilde{q}$. We show that $\varphi$ is injective. If
$\varphi(x+y\sigma)=0$, for some $x,y\in\Z$, then by construction,
$$
0=(x+y\sigma)(2\tilde{q})=(x+y\sigma)(b_0+p_0)=(x+y)b_0+(x-y)p_0.
$$
It follows that $x+y=0$ and $x-y=0$ and hence $x=y=0$. Let
$\Lambda'=\mathrm{coker}(\varphi)$. We next show that $\Lambda'$ is
torsion free. We will prove this by contradiction. Note that
$\Lambda'_{\mathrm{tor}}$ is isomorphic to the saturation of
$\varphi(\Z[G])$ modulo $\varphi(\Z[G])$. Assume that for some
primitive element $x+y\sigma\in\Z[G]$ we have $p\mid
\varphi(x+y\sigma)$, for some prime number $p$, then we can write
\begin{equation}\label{eq p divides}
(x+y\sigma)\tilde{q}=p\alpha
\end{equation}
for some $\alpha\in\Lambda$. If we multiply both sides of \eqref{eq
p divides} by $\sigma+1$ and note that $b_0=(\sigma+1)\tilde{q}$,
then we have $(x+y)b_0 = p\alpha$. Since $b_0$ is primitive, we see
that $p|x+y$. If we multiply both side of \eqref{eq p divides} by
$\sigma-1$ instead, we get $p\mid x-y$. Hence we get $p\mid 2x$ and
$p\mid 2y$. Since $x+y\sigma$ is primitive, we see that $x$ and $y$
are coprime. Hence $p=2$. It follows that $x$ and $y$ have the same
parity and hence both of them are odd. We write $x=2x'+1$ and
$y=2y'+1$. Hence
$$
\varphi(x+y\sigma)=2\varphi(x'+y'\sigma)+ \varphi(1+\sigma)=
2\varphi(x'+y'\sigma)+b_0.
$$
The assumption $2\mid \varphi(x+y\sigma)$ implies that $2\mid b_0$.
This is a contradiction since we choose $b_0$ to be primitive. Thus
$\Lambda'$ is torsion free.

Consider the following diagram
$$
\xymatrix{
 0\ar[r] &B'\oplus P'\ar[r] &\Lambda'\ar[r] &Q'\ar[r] &0\\
 0\ar[r] &B\oplus P\ar[r]\ar[u] & \Lambda\ar[r]\ar[u] & Q\ar[r]\ar[u] &0\\
 0\ar[r] &\Z b_0\oplus\Z p_0\ar[r]\ar[u] &\Z[G]\ar[r]\ar[u]_\varphi &\Z/2\Z\ar[r]\ar[u] &0
}
$$
The middle column implies that $\HH^1(G,\Lambda')=\HH^1(G,\Lambda)$
and $\HH^2(G,\Lambda')=\HH^2(G,\Lambda)$.

If $\dim_{\Z/2\Z}Q=0$, then the lemma holds automatically. By
induction on $\dim Q$, we may assume that
$\Lambda'=\Z[G]^{r_0-1}\oplus \Z_{+}^{r_{+}}\oplus\Z_{-}^{r_{-}}$.
Then it follows that
$$
\mathrm{Ext}_G^1(\Lambda',\Z[G])
=\mathrm{Ext}_G^1(\Z_{+},\Z[G])^{r_{+}}\oplus
\mathrm{Ext}_G^1(\Z_{-},\Z[G])^{r_-}=0.
$$
This means that the column in the middle splits and hence
$$
\Lambda=\Z[G]^{r_0}\oplus \Z_{+}^{r_{+}}\oplus\Z_{-}^{r_{-}}.
$$
This proves the lemma.
\end{proof}

\section{Prym constructions}

In this section, we fix $S$ to be a smooth projective complex
surface with $\HH_1(S,\Z)=0$. Assume that $\sigma:S\rightarrow S$ is
an automorphism with $\sigma^2=1$. In this section, we will use
$\HH^2(S)$ to denote $\HH^2(S,\Z)$; we use $h^2(S)$ to denote the
rank of $\HH^2(S,\Z)$.

\subsection{Hodge structures}
Let $M\subset \Pic(S)$ be a saturated subgroup that is also a
$G$-submodule. Let $M^\perp\subset\HH^2(S,\Z)$ be the orthogonal
complement of $M$. Then $M^\perp$ is again a $G$-module.
\begin{defn}
The \textit{Prym part} of $M^\perp$ is defined to be
$$
\mathrm{Pr}(M^\perp,\sigma)=(\sigma-1)(M^\perp),
$$
with the induced Hodge structure and an integral bilinear form
$\langle x, y\rangle=\frac{1}{2}(x\cdot y)$, where $x\cdot y$ is the
intersection pairing.
\end{defn}
Note that the bilinear form $\langle x,y\rangle$ is integral.
Indeed, we have $x=(\sigma-1)x'$ and $y=(\sigma-1)y'$ for some
$x',y'\in M^\perp$. Then we get
$$
\frac{1}{2}x\cdot y=\frac{1}{2}(\sigma x'\cdot\sigma y' +x'\cdot y'
-\sigma x'\cdot y'-x'\cdot\sigma y')=x'\cdot y' -\sigma x'\cdot
y'\in\Z.
$$
\begin{prop}\label{module structure of prym}
Assume that $\HH^1(G,M)=(\Z/2\Z)^{a_1}$ and
$\HH^2(G,M)=(\Z/2\Z)^{a_2}$. If $\sigma$ is fixed point free and
$a_1=0$, then we have
$$
 M^\perp\cong \Z[G]^{s_0}\oplus \Z_{-}^{a_2+2},
$$
for some $s_0\geq 0$. If $\sigma$ has $r>0$ isolated fixed points and
$a_2=0$, then
$$
 M^\perp\cong\Z[G]^{s_0}\oplus \Z_{+}^{a_1+r-2},
$$
for some $s_0\geq 0$.
\end{prop}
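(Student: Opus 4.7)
The plan is to identify $M^\perp$ with the $\Z$-linear dual of $\HH^2(S,\Z)/M$ as a $G$-module, compute the $G$-module structure of that quotient via the long exact sequence of group cohomology, and then invoke Lemma \ref{structure lemma on G-modules} to read off the decomposition.

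First I would establish that $\HH^2(S,\Z)$ is torsion-free and carries a unimodular, $G$-invariant intersection form: torsion-freeness follows from $\HH_1(S,\Z)=0$ via the universal coefficient theorem, and unimodularity is Poincar\'e duality. The vanishing of $\HH_1(S,\Z)$ also forces $\HH^1(S,\calO_S)=0$, so $\Pic(S)=\mathrm{NS}(S)$, and by the Lefschetz $(1,1)$-theorem this is saturated in $\HH^2(S,\Z)$. Combined with the hypothesis that $M$ is saturated in $\Pic(S)$, this upgrades to saturation of $M$ in $\HH^2(S,\Z)$, so $\HH^2(S,\Z)/M$ is torsion-free. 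Unimodularity together with the $\sigma$-invariance of the intersection form then gives a $G$-equivariant isomorphism $M^\perp\cong \Hom_{\Z}(\HH^2(S,\Z)/M,\Z)$. Since $\Hom(-,\Z)$ fixes each of $\Z[G]$, $\Z_+$ and $\Z_-$, this functor preserves the decomposition type of Lemma \ref{structure lemma on G-modules}, and so the problem reduces to describing $\HH^2(S,\Z)/M$ as a $G$-module.

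Next I would feed $0\to M\to \HH^2(S,\Z)\to \HH^2(S,\Z)/M\to 0$ into $G$-cohomology. Plugging in the hypotheses on $\HH^i(G,M)$ together with the computations of $\HH^i(G,\HH^2(S,\Z))$ in Theorem \ref{thm on surface with involution} or Theorem \ref{thm on surface with involution fixed point free}, and using that $\HH^i(G,-)$ is $2$-periodic in positive degrees for $G$ cyclic of order $2$, one cuts out $\HH^1(G,\HH^2(S,\Z)/M)$ and $\HH^2(G,\HH^2(S,\Z)/M)$. In the fixed-point-free case with $a_1=0$ these should come out to $(\Z/2\Z)^{a_2+2}$ and $0$, and in the case of $r\geq 1$ fixed points with $a_2=0$ they should come out to $0$ and $(\Z/2\Z)^{a_1+r-2}$. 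Lemma \ref{structure lemma on G-modules} applied to $\HH^2(S,\Z)/M$, followed by dualizing, produces the claimed decomposition of $M^\perp$, with $s_0$ determined by the rank count.

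The step I expect to be the main obstacle is the saturation argument that upgrades $M$ from $\Pic(S)$ to $\HH^2(S,\Z)$: without torsion-freeness of the quotient, $\Hom_\Z(\HH^2(S,\Z)/M,\Z)$ and $M^\perp$ would differ by a finite contribution, and the long exact sequence in group cohomology would pick up spurious $2$-torsion that would spoil the clean identification of $\HH^1$ and $\HH^2$. Once saturation is secured, the remaining work is a systematic chase through the long exact sequence together with bookkeeping of $2$-torsion dimensions.
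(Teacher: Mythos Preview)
Your approach is essentially identical to the paper's: the paper writes the short exact sequence as $0\to M\to \HH^2(S,\Z)\to (M^\perp)^*\to 0$ (which is your $\HH^2(S,\Z)/M$ under the unimodularity--saturation identification you spell out), runs the long exact sequence in $G$-cohomology against Theorems \ref{thm on surface with involution} and \ref{thm on surface with involution fixed point free}, and then invokes $(M^\perp)^*\cong M^\perp$ together with Lemma \ref{structure lemma on G-modules}. Your extra care about the saturation of $M$ in $\HH^2(S,\Z)$ is well placed---the paper leaves that step implicit---but otherwise there is no substantive difference.
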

\begin{proof}
For any $G$-module $N$, we can define $N^*=\Hom_\Z(N,\Z)$, which is
again a $G$-module in a natural way. If $N$ is free as a
$\Z$-module, then we have $N^*\cong N$ as $G$-modules. By Lemma
\ref{structure lemma on G-modules}, this is reduced to
$(\Z_+)^*\cong\Z_+$, $(Z_{-})^*\cong\Z_{-}$ and $\Z[G]^*\cong\Z[G]$,
which are easy to verify. We consider the following exact sequence
$$
\xymatrix{
 0\ar[r] &M\ar[r] &\HH^2(S,\Z)\ar[r] &(M^\perp)^*\ar[r] &0.
}
$$
If $\sigma$ is fixed point free, then Theorem \ref{thm on surface
with involution fixed point free} shows that $\HH^2(G,\HH^2(S))=0$
and $\HH^1(G,\HH^2(S))=(\Z/2\Z)^2$. If furthermore $a_1=0$, then
from the above short exact sequence we get the following exact
sequence
$$
\xymatrix{
 0\ar[r] &\HH^1(G,\HH^2(S))\ar[r] &\HH^1(G,(M^\perp)^*)\ar[r]
 &\HH^2(G,M)\ar[r] &0
 }
$$
and $\HH^2(G,(M^\perp)^*)=0$. Since $M^\perp\cong (M^\perp)^*$, we
get
$$
\HH^1(G,M^\perp)=(\Z/2\Z)^{a_2+2}\text{ and } \HH^2(G,M^\perp)=0.
$$
Then the conclusion follows from Lemma \ref{structure lemma on
G-modules}.

If $\sigma$ has $r>0$ fixed points, then Theorem \ref{thm on surface
with involution} shows that $\HH^1(G,\HH^2(S))=0$ and
$\HH^2(G,\HH^2(S))=(\Z/2\Z)^{r-2}$. If furthermore $a_2=0$, then the
associated long exact sequence becomes
$$
\xymatrix{
 0\ar[r] &\HH^2(G,\HH^2(S))\ar[r] &\HH^2(G,(M^\perp)^*)\ar[r]
 &\HH^3(G,M)\ar[r] &0
}
$$
and $\HH^1(G,(M^\perp)^*)=0$. From this we get
$$
\HH^1(G,M^\perp)=0 \text{ and } \HH^2(G,M^\perp)=(\Z/2\Z)^{a_1+r-2}.
$$
Then the conclusion follows again from Lemma \ref{structure lemma on
G-modules}.
\end{proof}

\begin{cor}\label{rank of prym}
If $\sigma$ is fixed point free and $a_1=0$, then
$$
\rk \mathrm{Pr}(M^\perp,\sigma)=\frac{h^2(S)-\rk M+a_2}{2}+1.
$$
If $\sigma$ has $r>0$ fixed points and $a_2=0$, then
$$
\mathrm{Pr}(M^\perp,\sigma)=\{\alpha\in
M^\perp:\sigma(\alpha)=-\alpha\}
$$
and
$$
\rk \mathrm{Pr}(M^\perp,\sigma)=\frac{h^2(S)-\rk M-a_1-r}{2}+1.
$$
\end{cor}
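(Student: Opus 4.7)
The plan is to read off the rank directly from the $G$-module decomposition of $M^\perp$ produced by Proposition \ref{module structure of prym}, using that $(\sigma-1)$ acts simply enough on each of the three elementary summands $\Z_+$, $\Z_-$ and $\Z[G]$.

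First I would record the action of $\sigma-1$ on each summand: on $\Z_+$ the map is zero, on $\Z_-$ it is multiplication by $-2$ (so its image has rank $1$), and on $\Z[G]$ written as $\Z\cdot 1\oplus\Z\cdot\sigma$ one has $(\sigma-1)(a+b\sigma)=(a-b)(\sigma-1)$, whose image is the rank-one sublattice $\Z(\sigma-1)$. I would also note that $\rk M^\perp=h^2(S)-\rk M$ since $M$ is saturated in $\HH^2(S,\Z)$, so the free part of $M^\perp$ really has that rank.

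For the fixed-point-free case, Proposition \ref{module structure of prym} gives $M^\perp\cong\Z[G]^{s_0}\oplus\Z_-^{a_2+2}$. Comparing ranks yields $2s_0+(a_2+2)=h^2(S)-\rk M$, hence
\[
s_0=\frac{h^2(S)-\rk M-a_2-2}{2}.
\]
Applying $\sigma-1$ gives rank $s_0$ from the $\Z[G]$-part and rank $a_2+2$ from the $\Z_-$-part, so
\[
\rk\mathrm{Pr}(M^\perp,\sigma)=s_0+(a_2+2)=\frac{h^2(S)-\rk M+a_2}{2}+1,
\]
as required. For the second case, Proposition \ref{module structure of prym} gives $M^\perp\cong\Z[G]^{s_0}\oplus\Z_+^{a_1+r-2}$, and the same rank comparison yields $s_0=\tfrac{h^2(S)-\rk M-a_1-r+2}{2}$. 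Now $\sigma-1$ kills the $\Z_+$-part entirely and contributes only the $s_0$ copies of $\Z(\sigma-1)$, so $\rk\mathrm{Pr}(M^\perp,\sigma)=s_0$, which rearranges to the stated formula.

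It remains to verify the set-theoretic equality $\mathrm{Pr}(M^\perp,\sigma)=\{\alpha\in M^\perp:\sigma(\alpha)=-\alpha\}$ in the fixed-point case. On $\Z_+$ the condition $\sigma\alpha=-\alpha$ forces $\alpha=0$, matching $(\sigma-1)\Z_+=0$; on $\Z[G]$ the condition gives exactly $\Z(1-\sigma)=\Z(\sigma-1)$, which is also the image of $\sigma-1$. Hence the two subgroups agree summand by summand. The reason this identification fails in the fixed-point-free case (and is therefore not stated there) is the factor of $2$ appearing on each $\Z_-$ summand, which leaves $(\sigma-1)(M^\perp)$ of index $2^{a_2+2}$ inside the $(-1)$-eigenlattice. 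I expect no real obstacle; the only care needed is the bookkeeping that $M^\perp$ itself, not just its dual, carries the decomposition of Proposition \ref{module structure of prym}, which follows from $(M^\perp)^*\cong M^\perp$ since the three elementary $G$-modules are self-dual (this was already used inside the proof of that proposition).
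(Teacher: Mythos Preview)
Your proof is correct and follows essentially the same approach as the paper: use the $G$-module decomposition of $M^\perp$ from Proposition~\ref{module structure of prym}, solve for $s_0$ from the rank equation, and read off $\rk\mathrm{Pr}(M^\perp,\sigma)$ summand by summand. The paper's proof is terser (it handles only the first case explicitly and says ``the remaining case is proved similarly''), whereas you spell out both cases and the set-theoretic equality, but the underlying argument is identical.
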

\begin{proof}
If $\sigma$ is fixed point free and $a_1=0$, then
$2s_0+a_2+2=h^2(S)-\rk M$. Hence we get $s_0=\frac{h^2(S)-\rk
M-a_2}{2}-1$. By definition, we get
$$
\rk \mathrm{Pr}(M^\perp,\sigma)=s_0+a_2+2=\frac{h^2(S)-\rk
M+a_2}{2}+1.
$$
The remaining case is proved similarly.
\end{proof}
We fix some notations. For any $G$-module $N$, we write
$$
N^{\sigma=1}=N^G=\{x\in N:\sigma(x)=x\} \text{ and }
N^{\sigma=-1}=\{x\in N:\sigma(x)=-x\}.
$$
Let $N$ be a free $\Z$-module with a symmetric bilinear form, then
$\det(N)$ is defined to be the determinant of a matrix
representation of the bilinear form. For example, when $\Sigma$ is a
smooth projective surface over $\C$, the cup-product (or
intersection form) defines a symmetric bilinear form on
$\Lambda=\HH^2(\Sigma,\Z)/\mathrm{tor}$. The Poincar\'e duality
implies that $\det(\Lambda)=\pm 1$.

\begin{prop}\label{determinant of prym}
Let $M\subset \Pic(S)$ be a saturated submodule as above such that
the intersection form restricted to $M$ is nondegenerate and
$2\nmid\det(M)$. Assume that $\HH^1(G,M)=(\Z/2\Z)^{a_1}$ and
$\HH^2(G,M)=(\Z/2\Z)^{a_2}$. Let $M\hookrightarrow M^*$ be the
natural induced inclusion with quotient denoted by $Q_M$. Let
$q'=\det(M^{\sigma=-1})$ and $q=|(Q_M)^{\sigma=-1}|$. If $\sigma$ is
fixed point free and $a_1=0$, then
$$
\det(\mathrm{Pr}(M^\perp,\sigma),\langle\,,\rangle)=\pm
2^{a+2}\frac{q^2}{q'},\quad a=\frac{\rk M+3a_2}{2}.
$$
If $\sigma$ has $r>0$ fixed points and $a_2=0$, then
$$
\det(\mathrm{Pr}(M^\perp,\sigma),\langle\,,\rangle)=\pm
2^{b}\frac{q^2}{q'},\quad b=\frac{\rk M+a_1}{2}.
$$
\end{prop}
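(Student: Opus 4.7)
The plan is to wedge $\mathrm{Pr}:=\mathrm{Pr}(M^\perp,\sigma)$ into the chain $M^{\sigma=-1}\oplus\mathrm{Pr}\subset L^-$ (where $L=\HH^2(S,\Z)$ and $L^\pm=L^{\sigma=\pm 1}$) and to compute each discriminant and index in this chain. I focus on the fixed-point case with $a_2=0$; the fixed-point-free case runs in parallel, with one extra bookkeeping factor explained at the end. By Corollary \ref{rank of prym} and Proposition \ref{module structure of prym} one has $\mathrm{Pr}=(M^\perp)^{\sigma=-1}$, so $M^{\sigma=-1}$ and $\mathrm{Pr}$ are mutually orthogonal saturated sublattices of $L^-$ whose ranks sum to $r_0:=\rk L^-$. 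The hypothesis $2\nmid\det M$ forces $Q:=L/(M\oplus M^\perp)$ to have odd order, so $\HH^i(G,Q)=0$ for $i\ge 1$ by Lemma \ref{lem 2-torsion}, and the long exact cohomology sequence of $0\to M\oplus M^\perp\to L\to Q\to 0$ produces an isomorphism $\HH^i(G,M\oplus M^\perp)\cong\HH^i(G,L)$ for every $i\ge 1$; this is the key input for the surjectivity argument below.

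First I would compute $|\det L^-|$. Since $\HH^1(G,L)=0$ by Theorem \ref{thm on surface with involution}, the sequence $0\to L^+\to L\xrightarrow{\sigma-1}L^-\to 0$ is exact. Dualizing by $\Hom(-,\Z)$ and identifying $L^*$ with $L$ via unimodularity, the identity $z\cdot(\sigma-1)x=-2(z\cdot x)$ for $z\in L^-_{\Q}$ shows that the induced map $(L^-)^*\to L$ is multiplication by $-2$, whereas the kernel of $L\to (L^+)^*$ is the integral orthogonal complement $(L^+)^\perp=L^-$. Comparison yields $2(L^-)^*=L^-$, so $(L^-)^*=\tfrac{1}{2} L^-$ and $|\det L^-|=2^{r_0}$.

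The main obstacle is the index $[L^-:M^{\sigma=-1}\oplus\mathrm{Pr}]=q$. The snake lemma applied to $1+\sigma$ acting on $0\to M\oplus M^\perp\to L\to Q\to 0$ gives an injection $L^-/(M^{\sigma=-1}\oplus\mathrm{Pr})\hookrightarrow Q^{\sigma=-1}$; the delicate point is surjectivity. Given $\bar x\in Q^{\sigma=-1}$ with lift $x\in L$, the element $a:=(1+\sigma)x$ lies in $(M\oplus M^\perp)^+$, and to correct $x$ by some $m\in M\oplus M^\perp$ with $(1+\sigma)m=a$ (so that $x-m\in L^-$) one needs the class of $a$ in $\HH^2(G,M\oplus M^\perp)$ to vanish. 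But $a=(1+\sigma)x$ is visibly a coboundary in $\HH^2(G,L)$, and $\HH^2(G,M\oplus M^\perp)\to\HH^2(G,L)$ is the isomorphism of paragraph one, so the class vanishes and the index equals $|Q^{\sigma=-1}|=q$.

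Combining $\det M^{\sigma=-1}\cdot\det\mathrm{Pr}=[L^-:M^{\sigma=-1}\oplus\mathrm{Pr}]^2\cdot\det L^-$ reads $q'\det\mathrm{Pr}=\pm 2^{r_0}q^2$, so $\det\mathrm{Pr}=\pm 2^{r_0}q^2/q'$. Passing from the intersection form to $\langle\,,\rangle$ rescales by $2^{-\rk\mathrm{Pr}}$, giving $\det(\mathrm{Pr},\langle\,,\rangle)=\pm 2^{r_0-\rk\mathrm{Pr}}q^2/q'=\pm 2^{\rk M^{\sigma=-1}}q^2/q'=\pm 2^{b}q^2/q'$, since $\rk M^{\sigma=-1}=(\rk M+a_1)/2$ by Proposition \ref{module structure of prym}. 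For the fixed-point-free case, dualization of $0\to L^-\to L\xrightarrow{\sigma+1}L^+\to 0$ likewise gives $|\det L^-|=2^{r_0}$, but now $\mathrm{Pr}=(\sigma-1)M^\perp$ sits properly inside $(M^\perp)^{\sigma=-1}$ with index $|\HH^1(G,M^\perp)|=2^{a_2+2}$, coming from the $\Z_-^{a_2+2}$ summand of $M^\perp$; multiplying the index in step three by $2^{a_2+2}$ and pushing through the same bookkeeping produces the exponent $a+2$ with $a=(\rk M+3a_2)/2$.
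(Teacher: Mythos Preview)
Your argument is correct and takes a genuinely different route from the paper. The paper computes $|\det L^-|$ geometrically: it first finds $\det(\HH^2(S)^G)$ by passing to the quotient surface $Y=S/\sigma$ (and in the fixed-point case to its minimal resolution $\tilde Y$, carefully tracking the lattice spanned by the exceptional $(-2)$-curves), and then combines this with the index formula for $L^+\oplus L^-\subset L$. Your approach is purely lattice-theoretic: you exploit unimodularity of $L$ and the exactness of $0\to L^+\to L\xrightarrow{\sigma-1}L^-\to 0$ (equivalent to $\HH^1(G,L)=0$) to read off $(L^-)^*=\tfrac12 L^-$ directly. This is shorter and avoids the excursion through $Y$ and $\tilde Y$; the paper's route, on the other hand, makes visible where the power of $2$ comes from geometrically. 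Your index computation $[L^-:M^{\sigma=-1}\oplus(M^\perp)^{\sigma=-1}]=q$ is essentially the same as the paper's (both boil down to $\HH^i(G,Q_M)=0$), just phrased via an explicit lifting rather than via the vanishing of the connecting map.

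One small slip in your fixed-point-free sketch: dualizing $0\to L^-\to L\xrightarrow{\sigma+1}L^+\to 0$ yields $(L^+)^*=\tfrac12 L^+$, hence $|\det L^+|=2^{\rk L^+}$, not $|\det L^-|=2^{\rk L^-}$. You then need the standard fact that orthogonal complements in a unimodular lattice have equal discriminants, giving $|\det L^-|=2^{\rk L^+}$; since $\rk L^+=\rk L^- -2$ in this case (from the $\Z_-^2$ summand of Theorem~\ref{thm on surface with involution fixed point free}), the exponent differs by $2$ from the fixed-point formula. With this correction your bookkeeping does produce $a+2$ as claimed.
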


\begin{proof}
We first prove the following

\textit{Claim 1}: Let $r_0$ be the number of copies of $Z[G]$ in
$\HH^2(S,\Z)$, then $\det(\HH^2(S)^{\sigma=-1})=\pm 2^{r_0}$.

Consider the following exact sequence
$$
\xymatrix{
 0\ar[r] &\HH^2(S)^{\sigma=1}\oplus\HH^2(S)^{\sigma=-1}\ar[r]
 &\HH^2(S)\ar[r] &(\Z/2\Z)^{r_0}\ar[r] &0.
}
$$
By Lemma \ref{discriminant of sub module}, we get
\begin{equation}\label{equation on product}
\det(\HH^2(S)^{\sigma=1})\det(\HH^2(S)^{\sigma=-1})=\pm 2^{2r_0}.
\end{equation}
Let $Y=S/G$ be the quotient of $S$ by the involution $\sigma$ and
$\pi:S\rightarrow Y$ the quotient map. We use $\HH^2(Y)$ to denote
$\HH^2(Y,\Z)$ modulo torsion. If $\sigma$ is fixed point free, then
$Y$ is smooth and there is an isomorphism $\pi^*\HH^2(Y)=
\HH^2(S)^G$. Hence we have
$$
\det{\HH^2(S)^G}=\pm 2^{h^2(Y)}=\pm 2^{r_0}.
$$
By combining this with the identity above, we get
$$
\det(\HH^2(S)^{\sigma=-1})=\pm 2^{r_0}.
$$
If $\sigma$ has $r>0$ fixed points, the $Y$ has $r$ ordinary double
points. Let $f:\tilde{Y}\rightarrow Y$ be the minimal resolution of
$Y$. Let $E_i\subset \tilde{Y}$, $i=1,\ldots,r$, be the exceptional
divisors. As in Remark \ref{remark on Leray}, we see that
$f^*\HH^2(Y)\subset\HH^2(\tilde{Y})$ is simply the orthogonal
complement of $\{E_1,\ldots,E_r\}$. Let $E\subset\HH^2(\tilde{Y})$
be the subgroup $\oplus_{i=1}^r \Z E_i$. From Remark \ref{remark on
Leray}, we see that the homomorphism $\HH^2(\tilde{Y})\rightarrow
\Z^r$, defined by intersecting with $E_i$'s, has cokernel
$N=\Z/2\Z$. Note that $E^\perp = f^*\HH^2(Y)$, then the Leray
spectral sequence in Remark \ref{remark on Leray} gives rise to the
following exact sequence
$$
\xymatrix{
 0\ar[r] &E^\perp\ar[r] &\HH^2(\tilde{Y})\ar[r]
 &\oplus\HH^2(E_i,\Z)\ar[r] &N=\Z/2\Z\ar[r] &0 }
$$
From this exact sequence, we can construct the following diagram
$$
\xymatrix{
 0\ar[r] &\frac{\HH^2(\tilde{Y})}{E\oplus E^\perp} \ar[r]
 &(\Z/2\Z)^r\ar[r] &\Z/2\Z\ar[r] &0\\
 0\ar[r] &\frac{\HH^2(\tilde{Y})}{E^\perp}\ar[r]\ar[u]
 &\bigoplus_{i=1}^{r}\HH^2(E_i,\Z)\ar[r]\ar[u]
 &\Z/2\Z\ar[r]\ar@{=}[u] &0\\
 &E\ar@{=}[r]\ar[u] &E\ar[u] & &
}
$$
It follows that $\frac{\HH^2(\tilde{Y})}{E\oplus
E^\perp}=(\Z/2\Z)^{r-1}$, and hence we get the following exact
sequence
$$
\xymatrix{
 0\ar[r] &E^\perp\oplus E\ar[r] &\HH^2(\tilde{Y})\ar[r]
 &(\Z/2\Z)^{r-1}\ar[r] &0}
$$
From this and Lemma \ref{discriminant of sub module}, we deduce that
$\det(E)\det(E^\perp)=\pm 2^{2r-2}$. Since $\det(E)= (-2)^r$, it
follows that
$$
\det(\HH^2(Y))=\det(f^*\HH^2(Y))=\det(E^\perp)=\pm 2^{r-2}.
$$
Note that $\det(\pi^*\HH^2(Y))=2^{h^2(Y)}\det(\HH^2(F))$ since
$\deg(\pi)=2$ and also that
$$
\mathrm{coker}\{\pi^*\HH^2(Y)\hookrightarrow\HH^2(S)^G\}=(\Z/2\Z)^{r-2}.
$$
which follows from (iv) of Theorem \ref{thm on surface with
involution}. By Lemma \ref{discriminant of sub module}, we have
$$
2^{h^2(Y)}\det(\HH^2(Y))=\det(\pi^*\HH^2(Y))=2^{2r-4}\det(\HH^2(S)^G).
$$
Note that $h^2(Y)=r_0+r-2$. Hence $\det(\HH^2(S)^G)=\pm 2^{r_0}$.
Then it follows from \eqref{equation on product} that
$\det(\HH^2(S)^{\sigma=-1})=\pm 2^{r_0}$. This proves Claim 1.

Now consider the following short exact sequence
$$
\xymatrix{
 0\ar[r] &M\oplus M^\perp\ar[r] &\HH^2(S)\ar[r] &Q_M\ar[r] &0.
}
$$
By taking the anti-invariant part (meaning the part on which
$\sigma=-1$), we have a long exact sequence
$$
\xymatrix{
 0\ar[r] &M^{\sigma=-1}\oplus (M^\perp)^{\sigma=-1}\ar[r]
 &\HH^2(S)^{\sigma=-1}\ar[r] &\\
 (Q_M)^{\sigma=-1}\ar[r]^{\delta\quad}
 &\HH^2(G,M\oplus M^\perp)\ar[r] &\cdots &
}
$$
Note that as a $G$-module, $Q_M$ is canonically isomorphic to
$M^*/M$ where $M\subset M^*$ is induced by the intersection form.
The fact that $2\nmid \det(M)$ implies that $Q_M$ has no 2-torsion.
Since $\HH^2(G,M\oplus M^\perp)$ is always of 2-torsion by Lemma
\ref{lem 2-torsion}, we conclude that $\delta=0$. By Lemma
\ref{discriminant of sub module}, we have
$$
\det(M^{\sigma=-1})\det((M^\perp)^{\sigma=-1})=q^2\det(\HH^2(S)^{\sigma=-1}).
$$
or equivalently,
$$
\det((M^\perp)^{\sigma=-1})=\frac{q^2}{q'}\det(\HH^2(S)^{\sigma=-1}).
$$
If $\sigma$ is fixed point free, then the quotient of
$\mathrm{Pr}(M^\perp,\sigma)\subset (M^\perp)^{\sigma=-1}$ is
$(\Z/2\Z)^{a_2+2}$. Then we have
\begin{align*}
 \det(\mathrm{Pr}(M^\perp,\sigma),\langle\,,\rangle)
 &=\frac{1}{2^s}\det(\mathrm{Pr}(M^\perp,\sigma),\cdot),\quad s=\rk\mathrm{Pr}(M^\perp,\sigma)\\
 &=2^{2a_2-s+4}\det(M^\perp)^{\sigma=-1}\\
 &=\pm 2^{a+2}\frac{q^2}{q'},
\end{align*}
where $a=\frac{\rk M+3a_2}{2}$. In the case when $\sigma$ has at
least one fixed point, we have
\begin{align*}
 \det(\mathrm{Pr}(M^\perp,\sigma),\langle\,,\rangle)
 &=\frac{1}{2^s}\det(\mathrm{Pr}(M^\perp,\sigma),\cdot),\quad s=\rk\mathrm{Pr}(M^\perp,\sigma)\\
 &=2^{-s}\det(M^\perp)^{\sigma=-1}\\
 &=\pm 2^{b}\frac{q^2}{q'},
\end{align*}
where $b=\frac{\rk M+a_1}{2}$.
\end{proof}

\begin{lem}\label{discriminant of sub module}
Let $N$ be a free $\Z$-module of finite rank which is equipped with
an integral symmetric bilinear form and $N'\subset N$ a submodule of
same rank. Then we have $\det(N')=|N/N'|^2\det(N)$.
\end{lem}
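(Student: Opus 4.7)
The plan is to reduce the statement to a direct computation with Gram matrices by choosing suitable bases.

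First I would pick a $\Z$-basis $e_1,\ldots,e_n$ of $N$ and a $\Z$-basis $f_1,\ldots,f_n$ of $N'$, and write the inclusion $N'\hookrightarrow N$ in matrix form as $f_j = \sum_i A_{ij} e_i$ for some integer matrix $A \in M_n(\Z)$. The Gram matrix of the bilinear form on $N'$ is then $G' = A^T G A$, where $G$ is the Gram matrix of the form on $N$ in the basis $\{e_i\}$. Taking determinants gives
\[
\det(N') = \det(G') = (\det A)^2 \det(G) = (\det A)^2 \det(N).
\]

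The remaining step is to identify $|\det A|$ with $|N/N'|$. For this I would invoke the Smith normal form: there exist $U, V \in GL_n(\Z)$ and positive integers $d_1 \mid d_2 \mid \cdots \mid d_n$ with $UAV = \mathrm{diag}(d_1,\ldots,d_n)$. Replacing the chosen bases by $\{e_i\}$ transformed by $U$ and $\{f_j\}$ transformed by $V$ does not change $|\det A|$, and in the new bases we directly read off $N/N' \cong \bigoplus_{i=1}^n \Z/d_i\Z$, so $|N/N'| = \prod d_i = |\det A|$.

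Combining the two computations yields $\det(N') = |N/N'|^2 \det(N)$, which is the claim. There is no genuine obstacle here; the only thing to be careful about is that $N'$ have full rank (so that $\det A \neq 0$ and $N/N'$ is finite), which is part of the hypothesis, and that the bilinear form on $N$ need not be unimodular — the identity $G' = A^T G A$ is insensitive to this.
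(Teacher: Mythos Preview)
Your proof is correct and follows essentially the same approach as the paper: choose bases of $N$ and $N'$, write the inclusion as an integer matrix $A$, observe that the Gram matrices are related by $G'=A^{T}GA$, and use $|N/N'|=|\det A|$. The only cosmetic difference is that you spell out the Smith normal form justification for $|N/N'|=|\det A|$, whereas the paper states this identity without further comment.
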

\begin{proof}
Let $\mathbf{v}=(v_1,\ldots,v_n)$ be an integral basis of $N$, where
$n=\rk N$. Let $\mathbf{v}'=(v'_1,\ldots,v'_n)$ be an integral basis
of $N'$. Then there is a matrix $A$ such that
$\mathbf{v}'=\mathbf{v}A$ and $|N/N'|=|\det(A)|$. Let $T$ be the
matrix representing the bilinear form of $N$ under the basis
$\mathbf{v}$. Then the matrix representation of the bilinear form on
$N'$ with respect to $\mathbf{v}'$ is $T'=A^t\, T A$. Hence by
definition, we have
$$
\det(N')=\det(T')=\det(A^t\,T A)=\det(A)^2\det(T)=|N/N'|^2\det(N).
$$
This proves the lemma.
\end{proof}

\subsection{Chow group and Brauer group}
We use $\mathrm{A}_0(S)$ to be the Chow group of zero cycles of
degree 0. Then $\sigma$ also acts on $\mathrm{A}_0(S)$. Let
$\mathrm{Br}(S)$ be the Brauer group of the surface $S$. Again
$\sigma$ induces an involution on $\mathrm{Br}(S)$.
\begin{defn}
The \textit{Prym part} of $\mathrm{A}_0(S)$ is
$$
\mathrm{Pr}(\mathrm{A}_0(S),\sigma)=(\sigma-1)\mathrm{A}_0(S).
$$
The \textit{Prym part} of $\mathrm{Br}(S)$ is
$$
\mathrm{Pr}(\mathrm{Br}(S),\sigma)=(\sigma-1)\mathrm{Br}(S).
$$
\end{defn}
\begin{rmk}
Under the assumption that $\HH_1(S,\Z)=0$, By Roitman's theorem
\cite{roitman}, the group $\mathrm{A}_0(S)$ is always torsion free
and uniquely divisible. Then $\mathrm{Pr}(\mathrm{A}_0(S),\sigma)$
is also torsion free and uniquely divisible. This further implies
that
$$
\mathrm{Pr}(\mathrm{A}_0(S),\sigma)=\mathrm{A}_0(S)^{\sigma=-1}.
$$
However, the same identity for Prym part of Brauer group does not
hold since $\mathrm{Br}(S)$ is torsion.
\end{rmk}

Consider the long exact sequence associated to the exponential
sequence on $S$, we have
$$
\xymatrix{
 0\ar[r] &\mathrm{Hdg}^2(S)\ar[r] &\HH^2(S,\Z)\ar[r] &\HH^2(S,\calO_S)\ar[r] &\HH^2(S,\calO_S^*)\ar[r] &0.
}
$$
Since $\mathrm{Br}(S)=\HH^2(S,\calO_S^*)_\mathrm{tor}$ (see
\cite[Cor. 2.2]{grothemdieck2}), we get
$$
\mathrm{Br}(S)=T^2(S)\otimes\Q/\Z=\Hom(\HH^2(S,\Z)_{\mathrm{tr}},\Q/\Z).
$$
Let $M\subset\mathrm{Hdg}^2(S)=\Pic(S)$ be saturated and
nondegenerate as above. We still set $Q_M=\HH^2(S,\Z)/(M\oplus
M^\perp)$. Let $\mathrm{Hdg}(Q_M)$ be the image of
$\mathrm{Hdg}^2(S)$ in $Q_M$. Set $T(Q_M)=Q_M/\mathrm{Hdg}(Q_M)$.
Then we have the following diagram
\begin{equation}\label{big diagram 1}
\xymatrix{
 &0 &0 &0 &\\
 0\ar[r] &\mathrm{Hdg}(Q_M)\ar[r]\ar[u] &Q_M\ar[r]\ar[u] &T(Q_M)\ar[r]\ar[u] &0\\
 0\ar[r] &\mathrm{Hdg}^2(S)\ar[r]\ar[u] &\HH^2(S,\Z)\ar[r]\ar[u] &T^2(S)\ar[r]\ar[u] &0\\
 0\ar[r] &\mathrm{Hdg}(M^\perp)\oplus M\ar[r]\ar[u] &M^\perp\oplus M\ar[r]\ar[u] &T(M^\perp)\ar[r]\ar[u] &0\\
 &0\ar[u] &0\ar[u] &0\ar[u] &
}
\end{equation}
If we take the $\sigma=-1$ parts and assume that $2\nmid\det(M)$,
then we get the following short exact sequence
\begin{equation}\label{sequence 1}
\xymatrix{
 0\ar[r] &T(M^\perp)^{\sigma=-1}\ar[r] &T^2(S)^{\sigma=-1}\ar[r] &T(Q_M)^{\sigma=-1}\ar[r] &0.
}
\end{equation}
The surjectivity on the right hand side follows from the fact that
$T(Q_M)$ has no 2-torsion and that $\HH^2(G,T(M^\perp))$ is of
2-torsion (Lemma \ref{lem 2-torsion}).
\begin{lem}\label{lem anti-invariant}
Let $N$ be a $G$-module which is free of finite rank over $\Z$. Then
$$
N^{\sigma=-1}\otimes\Q/\Z=\mathrm{Pr}(N\otimes\Q/\Z,\sigma)=(\sigma-1)(N\otimes\Q/\Z).
$$
\end{lem}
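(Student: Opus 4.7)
The plan is to establish the only nontrivial identification, namely $N^{\sigma=-1}\otimes\Q/\Z=(\sigma-1)(N\otimes\Q/\Z)$, since the equality with $\mathrm{Pr}(N\otimes\Q/\Z,\sigma)$ is the definition of the Prym part. I will prove the two inclusions directly, using only the divisibility of $\Q/\Z$ and the elementary identity $(\sigma+1)(\sigma-1)=\sigma^{2}-1=0$. The structure theorem for $G$-modules (Lemma \ref{structure lemma on G-modules}) is available as a safety net, but it turns out not to be needed.

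For the inclusion $N^{\sigma=-1}\otimes\Q/\Z\subset(\sigma-1)(N\otimes\Q/\Z)$, the key observation is that for $x\in N^{\sigma=-1}$ one has $(\sigma-1)x=-2x$, so an element $x\otimes q$ looks like twice something. Since $\Q/\Z$ is divisible, any $q\in\Q/\Z$ can be written as $2q'$; then
\[
x\otimes q \;=\; 2x\otimes q' \;=\; -(\sigma-1)x\otimes q' \;=\; (\sigma-1)(-x\otimes q'),
\]
which lies in $(\sigma-1)(N\otimes\Q/\Z)$.

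For the reverse inclusion, I would simply observe that for any $y\in N\otimes\Q/\Z$, the element $(\sigma-1)y$ is automatically anti-invariant: applying $\sigma$ gives $\sigma(\sigma-1)y=(1-\sigma)y=-(\sigma-1)y$. Writing $y=\sum x_i\otimes q_i$, one has $(\sigma-1)y=\sum(\sigma-1)x_i\otimes q_i$, and each $(\sigma-1)x_i$ lies in $N^{\sigma=-1}$ by the same computation applied in $N$ itself. Hence $(\sigma-1)y\in N^{\sigma=-1}\otimes\Q/\Z$.

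I do not expect any real obstacle; the only subtlety is remembering that $\Q/\Z$ is $2$-divisible, which is what makes the first inclusion work (if $\Q/\Z$ were replaced by $\Z$ the statement would fail, as the extra $2$-torsion group cohomology classes would obstruct it). This also matches the role of the lemma in its intended application to the short exact sequence \eqref{sequence 1}, where $\HH^{i}(G,-)$ is $2$-torsion and divisibility of $\Q/\Z$ kills it.
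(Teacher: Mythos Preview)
Your proof is correct. The paper takes a different route: it invokes the structure lemma (Lemma~\ref{structure lemma on G-modules}) to decompose $N$ as $\Z[G]^{r_0}\oplus\Z_{+}^{r_+}\oplus\Z_{-}^{r_-}$ and then verifies the identity on each of the three indecomposable pieces separately. Your argument is more direct and more elementary: you never use the structure theorem, only the $2$-divisibility of $\Q/\Z$ and the identity $(\sigma-1)x=-2x$ for $x\in N^{\sigma=-1}$. This has the advantage of making explicit exactly what property of the coefficient group is being used (and why the same statement fails over~$\Z$), whereas the paper's case-by-case check hides this. One small point you are using implicitly: the map $N^{\sigma=-1}\otimes\Q/\Z\to N\otimes\Q/\Z$ is injective because $N^{\sigma=-1}$ is saturated in the free abelian group $N$, hence a $\Z$-direct summand; this is what allows you to regard both sides as subgroups of $N\otimes\Q/\Z$ and compare them by inclusions.
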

\begin{proof}
By Lemma \ref{structure lemma on G-modules}, we only need to check
for $N=\Z_{+}$, $N=\Z_{-}$ and $N=\Z[G]$, which are easy to verify.
\end{proof}
The above lemma leads to the following
\begin{prop}\label{exact sequence on Prym of Brauer}
Let $M\subset\Pic(S)$ be a $G$-module that is saturated and
nondegenerate. Assume that $2\nmid\det(M)$. Then there is a short
exact sequence
$$
\xymatrix{
 0\ar[r] &T(Q_M)^{\sigma=-1}\ar[r] &T(M^\perp)^{\sigma=-1}\otimes\Q/\Z\ar[r] &\mathrm{Pr}(\mathrm{Br}(S),\sigma)\ar[r] &0.
}
$$
We also have another short exact sequence
$$
\xymatrix{
 0\ar[r] &K\ar[r] &T((M^\perp)^{\sigma=-1})\otimes\Q/\Z\ar[r] &T(M^\perp)^{\sigma=-1}\otimes\Q/\Z\ar[r] &0,
}
$$
where
$K=\mathrm{coker}\{\HH^1(G,\HH^2(S,\Z))\rightarrow\HH^1(G,T^2(S))\}$.
\end{prop}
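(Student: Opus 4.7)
Both asserted sequences arise by tensoring a short exact sequence of finitely generated abelian groups with $\Q/\Z$ and applying the Tor long exact sequence. Throughout, Lemma~\ref{lem anti-invariant} applied to $N=T^2(S)$ identifies $T^2(S)^{\sigma=-1}\otimes\Q/\Z$ with $\mathrm{Pr}(T^2(S)\otimes\Q/\Z,\sigma)=\mathrm{Pr}(\mathrm{Br}(S),\sigma)$, producing the Brauer-theoretic group on the right of the first sequence.

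For the first sequence I start from \eqref{sequence 1}. Its outer terms are free abelian (as $\sigma=-1$ parts of torsion-free groups), while the quotient $T(Q_M)^{\sigma=-1}$ is a finite abelian group (since $Q_M$ is). Tensoring with $\Q/\Z$ and using the identities $\mathrm{Tor}_1(T(Q_M)^{\sigma=-1},\Q/\Z)=T(Q_M)^{\sigma=-1}$ and $T(Q_M)^{\sigma=-1}\otimes\Q/\Z=0$ immediately yields the first sequence.

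For the second sequence, set $V=(M^\perp)^{\sigma=-1}$. Since $\sigma$ acts on $\HH^2(S,\Z)$ by morphisms of Hodge structures, $V$ is a sub-Hodge structure of $M^\perp$ and $\mathrm{Hdg}(V)=V\cap\mathrm{Hdg}(M^\perp)=\mathrm{Hdg}(M^\perp)^{\sigma=-1}$. Hence the natural map $T(V)=V/\mathrm{Hdg}(V)\to T(M^\perp)^{\sigma=-1}$ is injective; let $C$ denote its cokernel. Consider
$$\phi\colon \{x\in M^\perp:(1+\sigma)x\in \mathrm{Hdg}(M^\perp)\}\longrightarrow \HH^2(G,\mathrm{Hdg}(M^\perp)),\quad x\mapsto[(1+\sigma)x].$$
A direct check gives $\ker\phi=V+\mathrm{Hdg}(M^\perp)$, so $\phi$ factors through an injection $C\hookrightarrow \HH^2(G,\mathrm{Hdg}(M^\perp))$. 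A class $c\in \mathrm{Hdg}(M^\perp)^G$ lies in the image of $\phi$ iff $c=(1+\sigma)x$ for some $x\in M^\perp$, iff $[c]$ vanishes in $\HH^2(G,M^\perp)$; hence $C=\ker\bigl(\HH^2(G,\mathrm{Hdg}(M^\perp))\to\HH^2(G,M^\perp)\bigr)$.

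It remains to identify this kernel with $K$. The hypothesis $2\nmid\det M$ forces $Q_M$, hence also $\mathrm{Hdg}(Q_M)\subset Q_M$, to be free of 2-torsion; by Lemma~\ref{lem 2-torsion} the left and middle columns of diagram~\eqref{big diagram 1} then give canonical isomorphisms $\HH^2(G,\mathrm{Hdg}^2(S))\cong\HH^2(G,\mathrm{Hdg}(M^\perp))\oplus\HH^2(G,M)$ and $\HH^2(G,\HH^2(S,\Z))\cong\HH^2(G,M^\perp)\oplus\HH^2(G,M)$, under which the natural map decomposes as $(f,\mathrm{id})$ with $f\colon \HH^2(G,\mathrm{Hdg}(M^\perp))\to\HH^2(G,M^\perp)$. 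The long exact sequence attached to $0\to\mathrm{Hdg}^2(S)\to\HH^2(S,\Z)\to T^2(S)\to 0$ shows that $K=\ker(\HH^2(G,\mathrm{Hdg}^2(S))\to\HH^2(G,\HH^2(S,\Z)))=\ker(f)=C$. Tensoring the short exact sequence $0\to T(V)\to T(M^\perp)^{\sigma=-1}\to C\to 0$ with $\Q/\Z$, exactly as in the first part, completes the proof. The main obstacle is the identification $C=K$, which hinges on the no-2-torsion hypothesis to decouple the $M$-summand in the column sequences of \eqref{big diagram 1}.
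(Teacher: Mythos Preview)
Your proof is correct and follows essentially the same route as the paper's. Both arguments obtain the first sequence by tensoring \eqref{sequence 1} with $\Q/\Z$, and for the second sequence both identify the cokernel of $T((M^\perp)^{\sigma=-1})\hookrightarrow T(M^\perp)^{\sigma=-1}$ with $\ker\bigl(\HH^2(G,\mathrm{Hdg}(M^\perp))\to\HH^2(G,M^\perp)\bigr)$ and then use the columns of~\eqref{big diagram 1} together with the no-$2$-torsion hypothesis to match this with $K$; your explicit map $\phi$ simply unpacks the connecting step that the paper phrases as ``taking the $\sigma=-1$ part'' of $0\to\mathrm{Hdg}(M^\perp)\to M^\perp\to T(M^\perp)\to 0$.
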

\begin{cor}\label{exact sequence on Prym of Brauer II}
There is a short exact sequence
$$
\xymatrix{
 0\ar[r] &K\oplus T(Q_M)^{\sigma=-1}\ar[r] &T((M^\perp)^{\sigma=-1})\otimes\Q/\Z\ar[r] &\mathrm{Pr}(\mathrm{Br}(S),\sigma)\ar[r] &0.
}
$$
\end{cor}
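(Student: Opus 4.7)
The plan is to splice the two short exact sequences of Proposition \ref{exact sequence on Prym of Brauer} and then to show that the resulting middle extension decomposes as a direct sum, using the hypothesis $2\nmid\det(M)$ to arrange that the two subquotients have coprime orders.

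Concretely, set $A=T((M^\perp)^{\sigma=-1})\otimes\Q/\Z$, $B=T(M^\perp)^{\sigma=-1}\otimes\Q/\Z$ and $C=\mathrm{Pr}(\mathrm{Br}(S),\sigma)$, and let $f:A\twoheadrightarrow B$, $g:B\twoheadrightarrow C$ be the surjections supplied by the proposition, with $\ker f=K$ and $\ker g=T(Q_M)^{\sigma=-1}$. The composition $g\circ f:A\to C$ is surjective with kernel $E:=f^{-1}(T(Q_M)^{\sigma=-1})$, and restricting $f$ to $E$ produces an extension
\[
0\longrightarrow K\longrightarrow E\longrightarrow T(Q_M)^{\sigma=-1}\longrightarrow 0,
\]
alongside the exact sequence $0\to E\to A\to C\to 0$. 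The corollary reduces to identifying $E$ with $K\oplus T(Q_M)^{\sigma=-1}$.

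To do this I will verify that the two constituents of $E$ are finite abelian groups of coprime order. The group $K$ is a quotient of $\HH^1(G,T^2(S))$, hence is of $2$-torsion by Lemma \ref{lem 2-torsion}, and it is finite because $T^2(S)$ is a finitely generated free $\Z$-module, so $\HH^1(G,T^2(S))$ embeds into $(\Z/2\Z)^{\mathrm{rk}\,T^2(S)}$. For the other factor, I invoke the canonical identification $Q_M\cong M^*/M$ used in the proof of Proposition \ref{determinant of prym}: the assumption $2\nmid\det(M)$ forces $|Q_M|=|\det(M)|$ to be odd, so $Q_M$ contains no $2$-torsion, whence neither does the quotient $T(Q_M)$, and $T(Q_M)^{\sigma=-1}$ is a finite abelian group of odd order.

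Any extension of a finite odd-order abelian group by a finite abelian $2$-group splits canonically through the primary decomposition: the $2$-primary subgroup $E[2^\infty]$ necessarily coincides with $K$, and its odd-primary complement maps isomorphically onto $T(Q_M)^{\sigma=-1}$. Substituting $E\cong K\oplus T(Q_M)^{\sigma=-1}$ into the exact sequence $0\to E\to A\to C\to 0$ yields the stated sequence. The only substantive step is this coprimality argument, and it is precisely where the hypothesis $2\nmid\det(M)$ is used; the remainder is a purely formal splice of the two sequences produced in Proposition \ref{exact sequence on Prym of Brauer}.
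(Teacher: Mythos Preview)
Your proof is correct and is exactly the argument implicit in the paper. The paper states the corollary without proof, treating it as an immediate consequence of splicing the two sequences in Proposition \ref{exact sequence on Prym of Brauer}; your observation that $K$ is $2$-torsion (as a quotient of $\HH^1(G,T^2(S))$) while $T(Q_M)^{\sigma=-1}$ has odd order (since $Q_M\cong M^*/M$ has order $|\det(M)|$) is precisely the point needed to split the resulting extension, and the paper later relies on this same coprimality in the proof of Theorem \ref{thm on prym construction}(iii).
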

\begin{proof}[Proof of Proposition \ref{exact sequence on Prym of Brauer}]
By Lemma \ref{lem anti-invariant}, we have
$$
T^2(S)^{\sigma=-1}\otimes \Q/\Z =\mathrm{Pr}(\mathrm{Br}(S),\sigma).
$$
We tensor sequence \eqref{sequence 1} with $\Q/\Z$ and get
$$
\xymatrix{
 0\ar[r] &\mathrm{Tor}_1(T(Q_M)^{\sigma=-1},\Q/\Z)\ar[r] &T(M^\perp)^{\sigma=-1}\otimes\Q/\Z\ar[r] &\mathrm{Pr}(\mathrm{Br}(S),\sigma)\ar[r] &0.
}
$$
Since $\mathrm{Tor}_1(T(Q_M)^{\sigma=-1},\Q/\Z)\cong
T(Q_M)^{\sigma=-1}$, the first exact sequence follows. Consider the
short exact sequence
$$
\xymatrix{
 0\ar[r] &\mathrm{Hdg}(M^\perp)\ar[r] &M^\perp\ar[r] &T(M^\perp)\ar[r] &0.
}
$$
By taking the $\sigma=-1$ part, we get
$$
\xymatrix{
 0\ar[r] &\mathrm{Hdg}(M^\perp)^{\sigma=-1}\ar[r] &(M^\perp)^{\sigma=-1}\ar[r] &T(M^\perp)^{\sigma=-1}\ar[r] &K'\ar[r] &0,
}
$$
where $K'=\ker\{\HH^2(G,\mathrm{Hdg}(M^\perp))\rightarrow
\HH^2(G,M^\perp)\}$. In the diagram \eqref{big diagram 1}, we note
that $Q_M$ (and hence $\mathrm{Hdg}(Q_M)$ and $T(Q_M)$) is free of
$2$-torsion. This implies $\HH^i(G,Q_M)=0$ (and hence
$\HH^i(G,\mathrm{Hdg}(Q_M))=0$ and $\HH^i(G,T(Q_M))=0$) for all
$i>0$. The left and middle columns of \eqref{big diagram 1} show
that
$$
\HH^2(G,M\oplus\mathrm{Hdg}(M^\perp))\cong\HH^2(G,\mathrm{Hdg}^2(S))
$$
and
$$
\HH^2(G,M^\perp\oplus M)\cong\HH^2(G,\HH^2(S,\Z)).
$$
Hence we have
\begin{align*}
 K' &=\ker\{\HH^2(G,\mathrm{Hdg}(M^\perp)\oplus M)\rightarrow \HH^2(G,M^\perp\oplus M)\}\\
    &=\ker\{\HH^2(G,\mathrm{Alg}^2(S))\rightarrow \HH^2(G,\HH^2(S,\Z))\}\\
    &=\mathrm{coker}\{\HH^1(G,\HH^2(S,\Z))\rightarrow \HH^1(G,T^2(S))\}\\
    &=K.
\end{align*}
This gives the following short exact sequence
$$
\xymatrix{
 0\ar[r] &T((M^\perp)^{\sigma=-1})\ar[r] &T(M^\perp)^{\sigma=-1}\ar[r] &K\ar[r] &0.
}
$$
By tensoring with $\Q/\Z$, we get the second exact sequence.
\end{proof}

\subsection{Applications to Fourfolds}
On a variety $Y$, the Deligne complex, $\Z(k)_\mathcal{D}$, is the complex
$$
\xymatrix{
 \Z(k)\ar[r] &\calO_Y\ar[r] &\Omega_Y^1\ar[r] &\cdots\ar[r]
 &\Omega_Y^{k-1}
}
$$
where $\Z(k)=(2\pi i)^k\Z$ is in degree 0. The Deligne cohomology
groups are defined to be the hypercohomology of the Deligne complex.
To be more precise,
$$
\HH^j_\mathcal{D}(Y,\Z(k))=\mathbb{H}^j(Y,\Z(k)_\mathcal{D}).
$$
For small values of $j$ and $k$, these groups are easy to
understand. For example, the exponential sequence shows that
$\Z(1)_\mathcal{D}$ is quasi-isomorphic to $\calO_{Y^{an}}^*[-1]$.
Hence we know that $\HH^2_{\mathcal{D}}(Y,\Z(1))\cong\Pic(Y)$ and
$\HH^3_\mathcal{D}(Y,\Z(1))_{\mathrm{tor}}\cong\mathrm{Br}(Y)$.
\begin{defn}\label{defn of second Brauer group}
Let $Y$ be a smooth projective variety. We define
$$
\mathrm{Br}_2(Y)=\HH_\mathcal{D}^5(Y,\Z(2))_\mathrm{tor}.
$$
\end{defn}
\begin{rmk}
If $H^i(Y,\calO_Y)=0$, for all $i>0$, then the group
$\mathrm{Br}_2(Y)$ is quite similar to the Brauer group of a
surface. If we further assume that $\HH^5(Y,\Z)=0$, then the long
exact sequence associated to
$$
\xymatrix{
 0\ar[r]
  &\Omega_Y^1[-2]\ar[r]
  &\Z_\mathcal{D}(2)\ar[r] &\{\Z(2)\rightarrow\calO_Y\}\ar[r]
  &0
}
$$
gives the following exact sequence
$$
\xymatrix{
 0\ar[r] &\mathrm{Hdg}^4(Y)\ar[r] &\HH^4(Y,\Z)\ar[r]
 &\HH^{1,3}(Y)\ar[r] &\HH^5_\mathcal{D}(Y,\Z(2))\ar[r] &0.
}
$$
Here we use the fact that
$\HH^4(Y,\Z)\cong\HH^4(Y,\{\Z(2)\to\calO_Y\})$, which is a
consequence of the vanishing of $\HH^i(Y,\calO_Y)$, $i>0$. The above
exact sequence implies that $\mathrm{Br}_2(Y)=T^4(Y)\otimes
\Q/\Z=\Hom(\HH^4(Y,\Z)_\mathrm{tr},\Q/\Z)$.
\end{rmk}

Let $X$ be a smooth projective fourfold with $\HH^i(X,\calO_X)=0$,
$\forall i>0$. Assume that $S$ parameterizes a family of curves on
$X$ given by
$$
\xymatrix{
 \mathscr{C}\ar[d]_p\ar[r]^q &X\\
 S &
}
$$
Let $\Phi=p_*q^*$ be the Abel-Jacobi homomorphism defined on the
cohomology groups, the Chow groups and the Brauer groups. To be more
precise we have
\begin{align*}
 \Phi: &\HH^4(X,\Z)\rightarrow \HH^2(S,\Z), \\
 \Phi: &\mathrm{CH}_1(X)\rightarrow \mathrm{CH}_0(S),\\
 \Phi: &\mathrm{Br}_2(X)\rightarrow \mathrm{Br}(S).
\end{align*}
The cylinder homomorphism $\Psi=q_*p^*$ is defined similarly. Let
$N$ be the saturation of the subgroup
$\Sym^2(\HH^2(X,\Z))\subset\HH^4(X,\Z)$ of cohomology classes coming
from $\HH^2(X,\Z)$. We define the primitive cohomology of $X$ to be
$$
 \HH^4(X,\Z)_\mathrm{prim}=N^\perp\subset\HH^4(X,\Z).
$$
Let $\mathrm{A}_1(X)=\CH_1(X)_\mathrm{hom}$ be the Chow group of
homologically trivial 1-cycles on $X$. In our case,
$\mathrm{A}_1(X)$ consists all elements of $\alpha\in\CH_1(X)$ such
that the intersection number $\alpha\cdot D$ is zero for all
divisors $D$ on $X$. Let $M\subset\Pic(S)$ be a $G$-submodule such
that
$$
\Phi(N)\subset M\text{ and } \Psi(M)\subset N.
$$
Then the homomorphism $\Phi$ induces
\begin{align*}
 \Phi: &\HH^4(X,\Z)_\mathrm{prim}\rightarrow M^\perp, \\
 \Phi: &\mathrm{A}_1(X)\rightarrow \mathrm{A}_0(S),
\end{align*}
where $\mathrm{A}_0(S)$ is the Chow group of 0-cycles of degree
zero. Similarly, we have
\begin{align*}
\Psi:&M^\perp\rightarrow \HH^4(X,\Z)_\mathrm{prim},\\
\Psi:&\mathrm{A}_0(S)\rightarrow \mathrm{A}_1(X).
\end{align*}

\begin{thm}\label{thm on prym construction}
Let $S$ be a smooth projective surface with $\HH_1(S,\Z)=0$ and
$\sigma:S\rightarrow S$ an involution with isolated fixed points.
Let $X$ be a 4-dimensional smooth projective variety with
$\HH^i(X,\calO_X)=0$, for all $i>0$ and $\HH^5(X,\Z)=0$. Assume that
$S$ parameterizes a family, $p:\mathscr{C}\rightarrow S$, of curves
on $X$ with $q:\mathscr{C}\rightarrow X$ being the natural map. Set
$\Phi=p_*q^*$ and $\Psi=q_*p^*$ as above, which are defined on the
cohomology groups, the Chow groups and the Brauer group. Let
$N\subset\HH^4(X,\Z)$ be the saturation of the image of
$\Sym^2(\HH^2(X,\Z))$. Let $M\subset\Pic(S)$ be a saturated and
nondegenerate $G$-submodule such that $\Phi(N)\subset M$ and
$\Psi(M)\subset N$.

(i) Assume that the following three conditions hold,
\begin{align*}
 &\text{(a). }\Psi\circ\Phi =-2: \HH^4(X,\Z)_\mathrm{prim} \rightarrow
 \HH^4(X,\Z)_\mathrm{prim};\\
 &\text{(b). }\Phi\circ\Psi=\sigma-1:  M^\perp\rightarrow M^\perp;\\
 &\text{(c). }\HH^1(G,M^\perp)=0.
\end{align*}
Then $\Phi$ induces an isomorphism
$$
\Phi:\HH^4(X,\Z)_{\mathrm{prim}}\rightarrow
\mathrm{Pr}(M^\perp,\sigma)(-1),
$$
which respects the bilinear forms of the two sides, namely
$$
 \langle\Phi(x),\Phi(y)\rangle=-(x\cdot y)_X,\quad\forall
 x,y\in\HH^4(X,\Z)_\mathrm{prim}.
$$
The condition (c) above is fulfilled if $\sigma$ has fixed points
and $\HH^1(G,M)=0$.

(ii) Assume that the following two conditions hold,
\begin{align*}
 &\text{(a). } \Psi\circ\Phi=-2:\mathrm{A}_1(X)\rightarrow
 \mathrm{A}_1(X);\\
 &\text{(b). } \Phi\circ\Psi=\sigma-1:\mathrm{A}_0(S)\rightarrow
 \mathrm{A}_0(S).
\end{align*}
Then there is a canonical isomorphism
$$
\mathrm{A}_1(X)\cong\mathrm{Pr}(\mathrm{A}_0(S),\sigma)\oplus(\text{2-torsion}),
$$
such that
$\Phi:\mathrm{A}_1(X)\rightarrow\mathrm{Pr}(\mathrm{A}_0(S),\sigma)$
is simply the projection.

(iii) Let the assumptions be the same as in (i). If
$2\nmid\det(M)\det(N)$, then for ``Brauer" groups, we have a short
exact sequence
$$
\xymatrix{
 0\ar[r] &K\ar[r] &\mathrm{Br}_2(X)\ar[r]^{\Phi\quad}
 &\mathrm{Pr}(\mathrm{Br}(S),\sigma)\ar[r] &0,
}
$$
where
$K=\mathrm{coker}\{\HH^1(G,\HH^2(S,\Z))\rightarrow\HH^1(G,T^2(S))\}$.
\end{thm}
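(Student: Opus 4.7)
By the projection formula applied to the correspondence, $\Phi$ and $\Psi$ are adjoint: for $n \in N$ and $m' \in M^\perp$, $(n,\Psi(m'))_X = (\Phi(n),m')_S = 0$ because $\Phi(n) \in M$. Hence $\Psi(M^\perp) \subset \HH^4(X,\Z)_{\mathrm{prim}}$ and $\Phi(\HH^4(X,\Z)_{\mathrm{prim}}) \subset M^\perp$. Applying hypothesis (b) to $\Phi(x)$ and then (a), I get $(\sigma-1)\Phi(x) = \Phi\Psi\Phi(x) = \Phi(-2x) = -2\Phi(x)$, so $\sigma\Phi(x) = -\Phi(x)$; that is, $\Phi(x)$ lies in the $\sigma=-1$ part of $M^\perp$, which by hypothesis (c) coincides with $\mathrm{Pr}(M^\perp,\sigma)$. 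Surjectivity onto the Prym part comes from $\Phi\Psi(m) = (\sigma-1)m$ sweeping out $(\sigma-1)M^\perp$. For injectivity, note that $\HH^5(X,\Z)=0$ gives $\HH_3(X,\Z)=0$ by Poincar\'e duality, whence $\HH^4(X,\Z)$ is torsion-free by the universal coefficient theorem; then $\Psi\Phi=-2$ forces $\ker\Phi=0$. The pairing identity follows from $(\Phi(x)\cdot\Phi(y))_S = (x,\Psi\Phi(y))_X = -2(x,y)_X$, so halving yields $\langle\Phi(x),\Phi(y)\rangle = -(x,y)_X$. The parenthetical claim about (c) follows from Proposition~\ref{module structure of prym}: when $\sigma$ has fixed points and $\HH^1(G,M)=0$, the module $M^\perp$ is a sum of copies of $\Z[G]$ and $\Z_+$, both of which have vanishing $\HH^1(G,-)$.

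\textbf{Part (ii).} The argument of (i) transfers to Chow groups. Roitman's theorem makes $\mathrm{A}_0(S)$ uniquely divisible, so $\HH^1(G,\mathrm{A}_0(S))=0$ and the anti-invariants coincide with the Prym part. Hence $\Phi$ surjects $\mathrm{A}_1(X)$ onto $\mathrm{Pr}(\mathrm{A}_0(S),\sigma)$ with kernel a subgroup of the $2$-torsion (from $\Psi\Phi=-2$). To split the surjection, I define $s:\mathrm{Pr}(\mathrm{A}_0(S),\sigma) \to \mathrm{A}_1(X)$ by $s\bigl((\sigma-1)\beta\bigr) = -\Psi\bigl((\sigma-1)(\beta/2)\bigr)$, where $\beta/2$ is the unique half in the divisible group $\mathrm{A}_0(S)$. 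The identity $(\sigma-1)^2 = -2(\sigma-1)$ gives $\Phi(s(\alpha))=\alpha$, and well-definedness (independence of the choice of $\beta$) follows because $\mathrm{A}_0(S)^G$ is closed under halving in the uniquely divisible group $\mathrm{A}_0(S)$. This produces a section and yields $\mathrm{A}_1(X) \cong \mathrm{Pr}(\mathrm{A}_0(S),\sigma) \oplus \ker\Phi$.

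\textbf{Part (iii).} Using the remark preceding the theorem, $\mathrm{Br}_2(X) = T^4(X)\otimes\Q/\Z$. Running the $X$-analogue of diagram \eqref{big diagram 1}, with $N\subset\mathrm{Hdg}^4(X)$ and cokernel $Q_N$ of $N\oplus\HH^4(X,\Z)_{\mathrm{prim}} \hookrightarrow \HH^4(X,\Z)$, I extract $0 \to T(\HH^4(X,\Z)_{\mathrm{prim}}) \to T^4(X) \to T(Q_N) \to 0$. The hypothesis $2\nmid\det(N)$ makes $T(Q_N)$ odd-torsion; tensoring with $\Q/\Z$ (using $\mathrm{Tor}_1(T(Q_N),\Q/\Z)=T(Q_N)$ and $T(Q_N)\otimes\Q/\Z=0$) gives
\[
0 \to T(Q_N) \to T(\HH^4(X,\Z)_{\mathrm{prim}})\otimes\Q/\Z \to \mathrm{Br}_2(X) \to 0.
\]
Part (i) supplies a Hodge-isometric isomorphism $\Phi:\HH^4(X,\Z)_{\mathrm{prim}} \xrightarrow{\sim} (M^\perp)^{\sigma=-1}(-1)$, inducing $T(\HH^4(X,\Z)_{\mathrm{prim}})\otimes\Q/\Z \cong T((M^\perp)^{\sigma=-1})\otimes\Q/\Z$. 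Corollary~\ref{exact sequence on Prym of Brauer II} (which uses $2\nmid\det(M)$) writes $\mathrm{Pr}(\mathrm{Br}(S),\sigma)$ as this same ambient group modulo $K\oplus T(Q_M)^{\sigma=-1}$. The desired SES $0 \to K \to \mathrm{Br}_2(X) \to \mathrm{Pr}(\mathrm{Br}(S),\sigma) \to 0$ will then follow from matching the two subgroups under $\Phi$.

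The main obstacle is precisely this last identification: one must show that under $\Phi$ the subgroup $T(Q_N)$ corresponds to $T(Q_M)^{\sigma=-1}$ inside the common middle term. My approach is to extend the isometric isomorphism of (i) to the full pair $(\HH^4(X,\Z),\HH^2(S,\Z))$ using $\Phi(N)\subset M$ and $\Psi(M)\subset N$, so that $\Phi$ descends to a map $Q_N \to Q_M$; the sign calculation $\sigma\Phi = -\Phi$ on primitives together with the projection formula shows the image lies in $Q_M^{\sigma=-1}$, while the determinant hypotheses together with the isometry of discriminant forms (both are odd-torsion realized as discriminants of isomorphic lattices) upgrade this to an isomorphism $T(Q_N) \xrightarrow{\sim} T(Q_M)^{\sigma=-1}$. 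Compatibility of the two embeddings into $T((M^\perp)^{\sigma=-1})\otimes\Q/\Z$ is then a functoriality statement for the connecting map $\mathrm{Tor}_1(-,\Q/\Z)$ applied to the two short exact sequences; quotienting by the common subgroup yields the claimed SES.
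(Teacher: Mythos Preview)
Your arguments for parts (i) and (ii) are essentially the same as the paper's, with only cosmetic differences in organization; the paper also uses torsion-freeness of $\HH^4(X,\Z)$ for injectivity, the relation $(\sigma-1)\Phi=\Phi\Psi\Phi=-2\Phi$ to land in anti-invariants, and Roitman plus the half $-\frac{1}{2}\Phi(\beta)$ to produce the splitting in (ii).

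In part (iii) your setup is correct and matches the paper exactly: you build the two short exact sequences with common middle term $T((M^\perp)^{\sigma=-1})\otimes\Q/\Z$ and reduce the theorem to identifying $T(Q_N)$ with $T(Q_M)^{\sigma=-1}$ inside it. The gap is in your final step. The ``discriminant of isomorphic lattices'' argument does not do what you want: the isometry of (i) identifies the discriminant group of $\HH^4(X,\Z)_{\mathrm{prim}}$ (namely $Q_N$) with the discriminant group of the lattice $(M^\perp)^{\sigma=-1}$, but that discriminant group is \emph{not} $(Q_M)^{\sigma=-1}$ in general --- the latter is the anti-invariant part of the discriminant of $M^\perp$, a different object (they can differ by $2$-primary pieces, which is precisely the delicate point here). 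So the upgrade to an isomorphism is not justified as written.

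The paper closes this gap more cheaply. Having set up the commutative diagram with the induced map $\phi:T(Q_N)\to K\oplus T(Q_M)^{\sigma=-1}$, it observes that $\Psi\Phi=-2$ on $\mathrm{Br}_2(X)$, so $\ker(\Phi)$ is $2$-torsion; the snake lemma gives $\ker(\phi)=0$ and $\ker(\Phi)\cong\mathrm{coker}(\phi)$. Since $T(Q_N)$ is odd-torsion its image under $\phi$ misses $K$ entirely, and $\mathrm{coker}(\phi)=K\oplus\mathrm{coker}\bigl(T(Q_N)\to T(Q_M)^{\sigma=-1}\bigr)$; the second summand is simultaneously odd-torsion (as a quotient of an odd group) and $2$-torsion (as a summand of $\ker(\Phi)$), hence zero. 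This forces $\phi:T(Q_N)\xrightarrow{\sim}T(Q_M)^{\sigma=-1}$ and $\ker(\Phi)\cong K$. You could also salvage your direct approach by running the relations $\Psi\Phi=-2$ and $\Phi\Psi=\sigma-1$ on the quotients $Q_N$, $Q_M$ themselves (they descend because $\Phi,\Psi$ preserve $N\oplus N^\perp$ and $M\oplus M^\perp$), but the snake-lemma route is shorter.
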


\begin{proof}
We note that the assumption $\HH^5(X,\Z)=0$ implies that
$\HH_3(X,\Z)=0$ by the Poincar\'e duality and hence $\HH^4(X,\Z)$ is
torsion-free by the universal coefficient theorem for cohomology. To
prove (i), we first show that
$\Phi:\HH^4(X,\Z)_\mathrm{prim}\rightarrow M^\perp$ is injective
with $\mathrm{Im}(\Phi)\subset \mathrm{Pr}(M^\perp,\sigma)$. For any
element $\alpha\in\HH^4(X,\Z)_\mathrm{prim}$, set
$\fa=\Phi(\alpha)\in M^\perp$. If $\fa=0$, then
$$
-2\alpha=\Psi(\Phi(\alpha))=\Psi(\fa)=0.
$$
This implies $\alpha=0$ since $\HH^4(X,\Z)$ is torsion free. This
shows that $\Phi$ is injective. We also have
$$
(\sigma-1)\fa=\Phi\circ\Psi(\Phi(\alpha))=\Phi(-2\alpha)=-2\fa,
$$
which implies $\sigma(\fa)=-\fa$, namely $\fa\in
(M^\perp)^{\sigma=-1}$. Since $\HH^1(G,M^\perp)=0$, we know that
$\fa\in\mathrm{Pr}(M^\perp,\sigma)$. We next show that
$\mathrm{Im}(\Phi)=\Pr(M^\perp,\sigma)$. To do this we only need to
show that $\Psi:M^\perp\rightarrow\HH^4(X,\Z)_\mathrm{prim}$ is
surjective. Indeed for any $\alpha\in\HH^4(X,\Z)_\mathrm{prim}$, we
know that $\fa=\Phi(\alpha)$ can be written as
$\fa=(\sigma-1)\fa_0$, for some $\fa_0\in M^\perp$. Let
$\alpha'=\Psi(\fa_0)$, then
$$
\Phi(\alpha')=\Phi\circ\Psi(\fa_0)=(\sigma-1)\fa_0=\fa=\Phi(\alpha).
$$
Since $\Phi$ is injective, this forces $\alpha=\alpha'=\Psi(\fa_0)$.
Hence $\Psi$ is surjective. We still need to check the compatibility
of the bilinear forms. Let $x,y\in\HH^4(X,\Z)_\mathrm{prim}$, then
$$
\langle
\Phi(x),\Phi(y)\rangle=\frac{1}{2}(\Phi(x)\cdot\Phi(y))_S=\frac{1}{2}(x\cdot\Psi\Phi(y))_X
=-(x\cdot y)_X.
$$

Let $\beta\in\mathrm{A}_1(X)$ be arbitrary and set
$\fb=\Phi(\beta)\in\mathrm{A}_0(S)$. Then, in the same way as for
cohomology, we can show that $\sigma(\fb)=-\fb$. If $\fb=0$, then
$-2\beta=\Psi(\fb)=0$; namely $\beta$ is 2-torsion. Conversely, if
$\beta$ is 2-torsion, then $\Phi(\beta)=0$ since $\mathrm{A}_0(S)$
is torsion free by a theorem of Roitman \cite{roitman}. This implies
that
$$
\ker(\Phi)=\mathrm{A}_1(X)[2]=\{\beta\in\mathrm{A}_1(X):2\beta=0\}.
$$
Since $\mathrm{A}_0(S)$ is uniquely divisible, the element
$\fb_0=-\frac{1}{2}\fb\in\mathrm{A}_0(S)$ is well-defined and
$\sigma(\fb_0)=-\fb_0$. Let $\beta'=\Psi(\fb_0)\in\mathrm{A}_1(X)$.
Then
$$
\Phi(\beta')=(\sigma-1)\fb_0=-2\fb_0=\fb=\Phi(\beta).
$$
This implies that $\beta-\beta'\in\ker(\Phi)$ and hence $\Psi$ is
surjective modulo 2-torsion. Thus we conclude that
$\Phi:\mathrm{A}_1(X)\rightarrow
\mathrm{Pr}(\mathrm{A}_0(S),\sigma)$ is surjective with kernel equal
to $\mathrm{A}_0(S)[2]$. The image of $\Psi$ gives the natural
splitting. This proves (ii).

We start to prove (iii). Consider the following short exact sequence
$$
\xymatrix{
 0\ar[r] &N\oplus N^\perp\ar[r] &\HH^4(X,\Z)\ar[r] &Q_N\ar[r] &0.
}
$$
Note that all classes in $N$ are hodge classes and hence $T(N\oplus
N^\perp)=T(N^\perp)$. Then we can deduce the following exact
sequence from the one above,
$$
\xymatrix{
0\ar[r] &T(N^\perp)\ar[r] &T^4(X)\ar[r] &T(Q_N)\ar[r] &0,
}
$$
where $T(Q_N)$ is the quotient of $Q_N$ be the image of Hodge
classes. By tensoring with $\Q/\Z$ and note that
$\mathrm{Tor}_1(T(Q_N),\Q/\Z)\cong T(Q_N)$ and
$\mathrm{Br}_2(X)=T^4(X)\otimes\Q/\Z$. we get a short exact sequence
relating $\mathrm{Br}_2(X)$ to the group
$T(\HH^4(X,\Z)_\mathrm{prim})$, namely
$$
\xymatrix{
 0\ar[r] &T(Q_N)\ar[r] &T(\HH^4(X,\Z)_\mathrm{prim})\otimes
 \Q/\Z\ar[r] &\mathrm{Br}_2(X)\ar[r] &0.
}
$$
Here $Q_N=N^*/N=\HH^4(X,\Z)/(N\oplus\HH^4(X,\Z)_\mathrm{prim})$. Let
$\mathrm{Hdg}(Q_N)$ be the image of $\mathrm{Hdg}^4(X)$ in $Q_N$,
then $T(Q_N)=Q_N/\mathrm{Hdg}(Q_N)$. Note that by assumption, $Q_N$
has no 2-torsion and hence neither does $T(Q_N)$. The above short exact
sequence together with the one obtained in Corollary \ref{exact
sequence on Prym of Brauer II}, we get the following diagram
$$
\xymatrix{
 0\ar[r] &K\oplus T(Q_M)^{\sigma=-1}\ar[r]
 &T((M^\perp)^{\sigma=-1})\otimes\Q/\Z\ar[r]
 &\mathrm{Pr}(\mathrm{Br}(S),\sigma)\ar[r] &0\\
 0\ar[r] &T(Q_N)\ar[r]\ar[u]_\phi &T(\HH^4(X,\Z)_\mathrm{prim})\otimes
 \Q/\Z\ar[r]\ar[u]_{\cong} &\mathrm{Br}_2(X)\ar[r]\ar[u]_\Phi &0
}
$$
where the middle column is isomorphism by (i). The assumptions imply that
$\Psi\circ\Phi=-2:\mathrm{Br}_2(X)\rightarrow \mathrm{Br}_2(X)$.
This implies $\ker(\Phi)$ is of 2-torsion. The snake lemma shows
that $\ker(\Phi)$ is isomorphic to $\mathrm{coker}(\phi)$. Since
$T(Q_M)$ and $T(Q_N)$ are all free of 2-torsion, we conclude that
$\phi:T(Q_N)\rightarrow T(Q_M)^{\sigma=-1}$ is isomorphism and
$\ker(\Phi)\cong K$.
\end{proof}

\section{Cubic fourfolds}

In this section, we fix $X\subset\PP^5_{\C}$ to be a smooth cubic
fourfold. Let $F=F(X)$ be the variety of lines on $X$. One naturally
embeds $F$ into the Grassmannian $G(2,6)$. It is known that $F$ is
smooth of dimension 4, see \cite{cg,ak}. Let $P=P(X)$ be the total
space of lines on $X$. We have the following diagram
$$
\xymatrix{ P\ar[r]^q\ar[d]^{p} &X\\
F & }
$$
A. Beauville and R. Donagi showed in \cite{bd} that $F$ is actually
irreducible holomorphic symplectic, i.e. $F$ is simply connected and
$\HH^0(F,\Omega_F^2)$ is generated by an everywhere nondegenerate
holomorphic 2-form $\omega$. Furthermore, they proved that the
Abel-Jacobi homomorphism
$$
\Phi=p_*q^*:\HH^4(X,\Z)(1)\to\HH^2(F,\Z)
$$
is an isomorphism of Hodge structures. The primitive cohomology of
$X$, denoted by $\HH^4(X,\Z)_{\mathrm{prim}}$, is defined to be all
elements orthogonal to $h^2$ under the intersection pairing, where
$h$ is the class of a hyperplane section on $X$. Let $b(-,-)$ be the
Beauville-Bogomolov bilinear form on $\HH^2(F,\Z)$; see
\cite{beauville2}. The primitive cohomology of $F$, denoted
$\HH^2(F,\Z)_{\mathrm{prim}}$, is defined to be all elements
orthogonal to $\Phi(h^2)$ under the bilinear form $b$. Then it is
further proved in \cite{bd} that $\Phi$ induces an isomorphism
$$
\Phi:\HH^4(X,\Z)_{\mathrm{prim}}\rightarrow\HH^2(F,\Z)_{\mathrm{prim}}
$$
and the intersection form on $X$ and the Beauville-Bogomolov form on
$F$ are related by
$$
b(\Phi(x),\Phi(y))=-(x\cdot y)_X
$$
for $x,y\in\HH^4(X,\Z)_{\mathrm{prim}}$.

\begin{defn}
Let $M$ be a free abelian group of finite rank with an integral
symmetric bilinear form $b_1:M\times M\to\Z$. Such an $M$ will be
called a \textit{lattice}. We define the kernel of the bilinear form
$b_1$ to be
$$
\ker(b_1)=\{x\in M: b_1(x,y)=0,\;\forall y\in M\}.
$$
$M$ will be called non-degenerate if $\ker (b_1)=0$. Let $x\in
M\backslash\ker(b_1)$. We define the \textit{scale}, $s_{b_1}(x)$,
of $x$ to be the largest integer that divides $b_1(m,x)$ for all
$m\in M$. The $(+)$-\textit{modification} of $(M,b_1)$ by $x$ is a
new lattice whose underlying abelian group is still $M$ while the
new bilinear form, $b_2=m^+_x(b_1)$, is given by
$$
b_2(\alpha,\beta)=b_1(\alpha,\beta)
+\frac{1}{(s_{b_1}(x))^2}b_1(x,\alpha)b_1(x,\beta).
$$
Similarly, the $(-)$-\textit{modification} of $(M,b_1)$ by $x$ is
$(M,b'_2=m^-_x(b_1))$ where the new bilinear form is given by
$$
b'_2(\alpha,\beta)=b_1(\alpha,\beta)-
\frac{1}{(s_{b_1}(x))^2}b_1(x,\alpha)b_1(x,\beta).
$$
If $x\in \ker(b_1)$, by convention, we define the scale of $x$ in
$(M,b_1)$ to be zero.
\end{defn}

\begin{lem}
Let $(M,b_1)$ be a lattice and $x\in M\backslash\ker(b_1)$. Let
$(M,b_2)$ the $(+)$-modification of $(M,b_1)$ by $x$ and $(M,b'_2)$
be the $(-)$-modification. Then the following are true

(i) The modification by $x$ is the same as the modification by $ax$
for all nonzero integer $a$.

(ii) Let $s$ be the scale of $x$ in
$(M,b_1)$ and $b_1(x,x)=sc_0$ with $c_0\in\Z$. Then the scale of $x$
in $(M,b_2)$ is $s+c_0$ and
the scale of $x$ in $(M,b'_2)$ is $s-c_0$.

(iii) If $s+c_0\neq 0$, then the $(-)$-modification of $(M,b_2)$ by
$x$ is $(M,b_1)$; If $s-c_0\neq 0$ then the
$(+)$-modification of $(M,b'_2)$ by $x$ is $(M,b_1)$.

(iv) Assume that $s\pm c_0\neq 0$, then the orthogonal complement
$x^\perp$ of $x$ is independent of the bilinear forms $b_1$, $b_2$
or $b'_2$.

(v) Assume that $M$ carries a Hodge structure of weight $2k$ and
$x\in M^{k,k}$. If $b_1$ respects the Hodge structure in the sense
that
$$
b_1(M^{i,j},M^{i',j'})=0,\text{ if }i\neq j',
$$
then both $b_2$ and $b'_2$ respect the Hodge structure of $M$.
\end{lem}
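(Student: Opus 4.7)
The proof is essentially a set of direct computations; the key observation that unifies them is that, by definition of the scale, for every $m \in M$ we may write $b_1(m,x) = s\,k_m$ with $k_m \in \Z$ and $\gcd_m k_m = 1$, and in particular $c_0 = k_x$. Substituting into the definition of the modification gives
\[
 b_2(m,x) = s k_m + \frac{1}{s^2}(s k_x)(s k_m) = k_m(s+c_0),
 \qquad
 b'_2(m,x) = k_m(s-c_0),
\]
so the $b_2$- and $b'_2$-values of $(m,x)$ are again integer multiples of $k_m$ with $\gcd$-of-coefficients equal to $1$. This gives (ii) at once (up to sign, which is the convention for scales), and it also proves (i): replacing $x$ by $ax$ multiplies the scale by $|a|$ and multiplies $b_1(x,\alpha)b_1(x,\beta)$ by $a^2$, so the correction term is unchanged.

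For (iii), I would plug the formulas from (ii) back into the definition: using $s_{b_2}(x)=s+c_0$ and $b_2(x,x)=c_0(s+c_0)$ (from the $m=x$ case above), the $(-)$-modification of $b_2$ by $x$ evaluates at $(\alpha,\beta)$ to
\[
 b_1(\alpha,\beta) + k_\alpha k_\beta - \frac{k_\alpha(s+c_0)\cdot k_\beta(s+c_0)}{(s+c_0)^2}
 = b_1(\alpha,\beta),
\]
so $m^-_x \circ m^+_x = \mathrm{id}$ on bilinear forms, and symmetrically $m^+_x \circ m^-_x = \mathrm{id}$. For (iv), the identity $b_2(m,x) = k_m(s+c_0)$ shows that $b_2(m,x) = 0$ iff $k_m = 0$ iff $b_1(m,x) = 0$, provided $s+c_0 \neq 0$; the same argument handles $b'_2$. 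Thus $x^\perp$ is independent of which of the three forms one uses.

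For (v), the key point is that when $x \in M^{k,k}$ the bilinear form $b_1$ pairs $x$ with $M^{i,j}$ only when $j = k$ (hence $i=k$), so $b_1(x,\,\cdot\,)$ vanishes on every Hodge summand except $M^{k,k}$. Consequently the correction term $\pm\frac{1}{s^2}b_1(x,\alpha)b_1(x,\beta)$ in $b_2$ and $b'_2$ vanishes unless both $\alpha$ and $\beta$ lie in $M^{k,k}$, in which case $i=j'=k$ and the compatibility condition $i=j'$ is already satisfied by $b_1(\alpha,\beta)$. So the correction term never destroys the Hodge compatibility. I do not anticipate a real obstacle: everything is a one-line substitution once one writes $b_1(m,x) = s k_m$, and the only subtlety is making sure to exclude the degenerate cases $s \pm c_0 = 0$ at the points where the hypothesis requires it (in (iii) and (iv)).
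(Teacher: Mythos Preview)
Your proof is correct and follows exactly the same route as the paper's: writing $b_1(m,x)=s\,k_m$ and computing $b_2(m,x)=(s+c_0)k_m$ is precisely the paper's key identity $b_2(x,\alpha)=(s+c_0)\frac{b_1(x,\alpha)}{s}$. The paper only spells out part (ii) and then declares that ``the remaining statements follow quite easily from the definitions,'' so your explicit verifications of (i), (iii), (iv), (v) are more detailed than the original but add nothing new in substance.
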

\begin{proof}
For any $\alpha\in M$, we have
$$
b_2(x,\alpha)=b_1(x,\alpha)+\frac{1}{s^2}b_1(x,x)b_1(x,\alpha)
=(s+c_0)\frac{b_1(x,\alpha)}{s}
$$
When $\alpha$ runs through $M$, $\frac{b_1(x,\alpha)}{s}$ runs
through every integer. Hence the scale of $x$ in $(M,b_2)$ is
$s+c_0$. This proves the first part of (ii). The second part is
proved in a same way. The remaining statements follow quite easily
from the definitions.
\end{proof}

From now on, we will identify $\HH^4(X,\Z)$ with $\HH^2(F,\Z)$ as
free abelian groups with Hodge structures, which will be denoted
$\Lambda$. Let $\lambda_0\in\Lambda$ be the element that corresponds
to $h^2\in\HH^4(X,\Z)$, where $h$ is the class of a hyperplane
section on $X$. On $F$, the element $\lambda_0$ is nothing but the
polarization coming from the Pl\"ucker embedding of the Grassmannian
$G(2,6)$. Let $b_0:\Lambda\times\Lambda\to\Z$ be the intersection
form on $\HH^4(X,\Z)$ and $b:\Lambda\times\Lambda\to\Z$ be the
Beauville-Bogomolov form on $\HH^2(F,\Z)$.

\begin{prop}
(i) The bilinear form $b$ can be written as
$b=-m_{\lambda_0}^-(b_0)$, i.e.
\begin{equation*}
b(x,y)=b_0(x,\lambda_0)b_0(y,\lambda_0)-b_0(x,y),\quad\forall
x,y\in\Lambda.
\end{equation*}

(ii) The bilinear form $b_0$ can be written as
$b_0=-m_{\lambda_0}^{-}(b)$, i.e.
$$
b_0(x,y)=\frac{1}{4}b(x,\lambda_0)b(y,\lambda_0)-b(x,y),\quad\forall
x,y\in\Lambda.
$$
\end{prop}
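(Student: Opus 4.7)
The plan is to verify both identities by reducing to an orthogonal decomposition $\Lambda\otimes\Q = \Q\lambda_0 \oplus (\HH^4(X,\Z)_{\mathrm{prim}}\otimes\Q)$, which is orthogonal for both $b_0$ and $b$ since $\lambda_0$ is non-isotropic for each. Both sides of each formula are symmetric $\Z$-valued bilinear forms on $\Lambda$, so it suffices to check equality on the three representative pairs $(\lambda_0,\lambda_0)$, $(\lambda_0,y)$ with $y$ primitive, and $(x,y)$ with $x,y$ primitive, and then extend by $\Q$-bilinearity.

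First I would record the two numerical inputs that drive everything: $b_0(\lambda_0,\lambda_0)=h^4=3$ because $X$ is a cubic hypersurface, and $b(\lambda_0,\lambda_0)=6$, the Beauville--Bogomolov square of the Pl\"ucker polarization on $F$, which is computed in \cite{bd}. Next I would observe that primitivity with respect to the two forms corresponds under $\Phi$: for a class $x$ with $b_0(x,\lambda_0)=0$ one has $b(\Phi(x),\lambda_0)=b(\Phi(x),\Phi(h^2))=-(x\cdot h^2)_X=0$ by the Beauville--Donagi compatibility recalled before the proposition, and the reverse implication is identical.

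The verification of (i) then proceeds case by case. On $(\lambda_0,\lambda_0)$ the right hand side equals $3^2-3=6=b(\lambda_0,\lambda_0)$. On $(\lambda_0,y)$ with $y$ primitive the right hand side is $3\cdot 0 - 0 = 0 = b(\lambda_0,y)$. On $(x,y)$ with both classes primitive the right hand side reduces to $0 - b_0(x,y) = -b_0(x,y)$, which equals $b(x,y)$ by Beauville--Donagi. Formula (ii) is then checked by the same three-case analysis with the roles of $b$ and $b_0$ interchanged: on $(\lambda_0,\lambda_0)$ one has $\tfrac14\cdot 6^2 - 6 = 3 = b_0(\lambda_0,\lambda_0)$, the mixed case again vanishes by primitivity, and on primitive pairs $-b(x,y)=b_0(x,y)$.

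The only genuinely non-trivial input is the value $b(\lambda_0,\lambda_0)=6$. I would take this directly from \cite{bd}, or else reprove it from the Fujiki relation $\int_F \alpha^4 = c\,b(\alpha,\alpha)^2$ on the hyperk\"ahler fourfold $F$, combined with the Pl\"ucker degree $\int_F \lambda_0^4 = 108$ (Altman--Kleiman) and the Fujiki constant $c=3$ for manifolds of $K3^{[2]}$-type; then $c\cdot b(\lambda_0,\lambda_0)^2 = 108$ together with positivity of $b(\lambda_0,\lambda_0)$ forces the value $6$.
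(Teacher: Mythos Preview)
Your approach to (i) is essentially the paper's: both arguments check the formula on $\lambda_0$ and on the primitive part separately, using $b_0(\lambda_0,\lambda_0)=3$, $b(\lambda_0,\lambda_0)=6$, and the Beauville--Donagi relation $b=-b_0$ on primitive classes. One small correction: your justification of the mixed case $b(\lambda_0,y)=0$ for $y$ primitive misuses the compatibility formula, which in \cite{bd} is only stated for pairs $x,y\in\HH^4(X,\Z)_{\mathrm{prim}}$, so you cannot plug in $y=h^2$. The orthogonality you need is exactly the statement (recalled just before the proposition) that $\Phi$ carries $\HH^4(X,\Z)_{\mathrm{prim}}$ onto $\HH^2(F,\Z)_{\mathrm{prim}}$, the latter being \emph{defined} as $\lambda_0^{\perp_b}$; cite that instead.

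For (ii) your route genuinely differs from the paper's. The paper does not re-run the three-case check; it argues that (ii) amounts to knowing that the scale $s_b(\lambda_0)$ equals $2$, and computes this scale by specializing to a Pfaffian cubic fourfold, where $F\cong S^{[2]}$ for a degree-$14$ K3 surface and $\lambda_0=2l-5\delta$ can be read off explicitly. Your direct verification of the displayed formula in (ii) is simpler and avoids the deformation argument entirely; what it does not literally establish is the equation $b_0=-m_{\lambda_0}^-(b)$ in the sense of the definition, since that requires $s_b(\lambda_0)=2$. But this is immediate from (i): setting $y=\lambda_0$ there gives $b(x,\lambda_0)=2\,b_0(x,\lambda_0)$, and since $b_0$ is unimodular with $\lambda_0$ primitive (its self-intersection is $3$), $b_0(x,\lambda_0)$ attains the value $1$, so $s_b(\lambda_0)=2$. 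Adding that one line makes your argument complete and strictly more elementary than the paper's Pfaffian computation.
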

\begin{proof}
This is essentially a restatement of the result of Beauville and
Donagi. The identification of $\HH^4(X,\Z)$ and $\HH^2(F,\Z)$ is via
the Abel-Jacobi isomorphism $\Phi$. We know that
$b(\lambda_0,\lambda_0)=6$ and $b(x,y)=-b_0(x,y)$ for
$x,y\in\HH^4(X,\Z)_{\mathrm{prim}}$. By Proposition 6 of \cite{bd},
we know that $\lambda_0$ is orthogonal to
$\HH^4(X,\Z)_{\mathrm{prim}}$ under $b$, then $b$ is uniquely
determined by its restriction to $\lambda_0$ and
$\HH^4(X,\Z)_{\mathrm{prim}}$. One checks that the bilinear form
given by the formula in (i) coincides with $b$ after restricting to
$\lambda_0$ and $\HH^4(X,\Z)_{\mathrm{prim}}$. This forces (i) to be
true. To prove (ii), we only need to check that the scale of
$\lambda_0$ in $(\Lambda,b)$ is equal to $2$. This can be seen
easily by considering the special case when $X$ is a Pfaffian cubic
fourfold. In this case $F=S^{[2]}$ where $S$ is a $K3$-surface of
degree 14. Under the canonical orthogonal decomposition
$\HH^2(F,\Z)\simeq\HH^2(S,\Z)\hat{\oplus}\Z\delta$, we have
$\lambda_0=2l-5\delta$ with $l$ being the class of the degree 14
polarization on $S$. Here $\delta\in\Pic(F)$ is the half of the
boundary divisor with $b(\delta,\delta)=-2$; see \cite{beauville2}.
From this, we computes that $b(\lambda_0,\Lambda)=2\Z$ and hence the
scale of $\lambda_0$ in $(\Lambda,b)$ is 2.
\end{proof}

\begin{rmk}
Let $\Lambda_0=(\lambda_0)^\perp\subset\Lambda$, which is
independent of the bilinear form. If $x\in\Lambda_0$, then
$b(x,y)=-b_0(x,y)$, for all $y\in \Lambda$.
\end{rmk}

Let $l\subset X$ be a general line on $X$. Then we have the
following splitting type of the normal bundle of $l$ in $X$, see
\cite[Proposition 6.19]{cg},
$$
\mathscr{N}_{l/X}\cong\calO^2\oplus\calO(1).
$$
We will use $\mathscr{N}_{l/X}^+$ to denote the positive sub-bundle
$\calO(1)\subset\mathscr{N}_{l/X}$. Let $S_l\subset F$ be the space
of all lines meeting $l$. It is known that $S_l$ is a smooth
surface, see \cite{voisin}. On $S_l$, there is a natural involution
$\sigma:S_l\to S_l$. If $[l']\in S_l$ is a point different from
$[l]$, then $\sigma([l'])$ is the residue line of $l\cup l'$. The
line $l$ determines a unique linear $P_l=\PP^2\subset\PP^5$ which is
the span of $l$ and the positive subbundle $\mathscr{N}_{l/X}^+$.
The intersection of $P_l$ and $X$ is given by
$$
P_l\cdot X=2l+l_0.
$$
Then $\sigma([l])=[l_0]$. The involution $\sigma$ has 16 isolated
fixed points. This number is computed in \cite[Cor. 1.7]{av}. The
quotient of $Y_l=S_l/\sigma$ is a quintic surface in $\PP^3$ with 16
ordinary double points. To apply the results from the previous
sections, we need the following lemma whose proof will be given
later.

\begin{lem}\label{key lemma}
The surface $S_l$ satisfies $\HH_1(S_l,\Z)=0$.
\end{lem}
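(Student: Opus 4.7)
The plan is to leverage the geometric description of $Y_l = S_l/\sigma$ given in the paragraph preceding the lemma: $Y_l$ is a quintic surface in $\PP^3$ with exactly $16$ ordinary double points, corresponding to the $16$ fixed points of $\sigma$. I first establish $\pi_1(Y_l) = 0$, then transfer the vanishing of $\HH_1$ to $S_l$ through the double cover $S_l \to Y_l$.

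Let $f\colon\tilde Y_l\to Y_l$ be the minimal resolution, with exceptional $(-2)$-curves $E_1,\ldots,E_{16}\cong\PP^1$. Each singular point of $Y_l$ is an $A_1$ singularity whose Milnor fiber deformation-retracts onto the exceptional $2$-sphere of its resolution; hence $\tilde Y_l$ is diffeomorphic to a smoothing of $Y_l$, i.e.\ to a smooth quintic surface in $\PP^3$. By the Lefschetz hyperplane theorem, any smooth quintic surface in $\PP^3$ is simply connected, so $\pi_1(\tilde Y_l) = 0$. Since collapsing simply connected subspaces does not change $\pi_1$, we obtain $\pi_1(Y_l) = 0$ and in particular $\HH_1(Y_l,\Z) = 0$.

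Next, let $\tilde S_l$ be the blow-up of $S_l$ at the $16$ fixed points of $\sigma$; this preserves $\HH_1$, so it suffices to prove $\HH_1(\tilde S_l,\Z) = 0$. The morphism $\tilde S_l \to \tilde Y_l$ induced by $\sigma$ is a double cover branched exactly along $E = \sum E_i$, with ramification curves $E'_i\cong\PP^1$ having normal bundle $\mathcal{O}(-1)$ in $\tilde S_l$. Van Kampen applied to the decomposition $\tilde S_l = (\tilde S_l\setminus E')\cup\bigcup_i N(E'_i)$---whose overlap boundaries $\partial N(E'_i)\cong S^3$ are simply connected---gives $\pi_1(\tilde S_l) = \pi_1(\tilde S_l\setminus E')$. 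Since the restriction $\tilde S_l\setminus E' \to \tilde Y_l\setminus E$ is an \'etale double cover, $\pi_1(\tilde S_l\setminus E')$ sits as an index-$2$ subgroup in $\pi_1(\tilde Y_l\setminus E)$. A second Van Kampen analysis of $\tilde Y_l\setminus E$, using $\pi_1(\tilde Y_l)=0$ together with $\pi_1(\partial N(E_i))\cong\pi_1(L(2,1))=\Z/2\Z$ from Lemma~\ref{local homology lemma}, shows that the meridians $m_i$ normally generate $\pi_1(\tilde Y_l\setminus E)$ and satisfy $m_i^2 = 1$; combined with the fact that each $m_i$ maps to the nontrivial element of $\Z/2\Z$ under the classifying map of the double cover, this forces $\HH_1(\tilde S_l\setminus E',\Z)=0$, and hence $\HH_1(S_l,\Z)=0$.

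The main obstacle is the final Van Kampen/Gysin analysis: one must verify that the image of the map $\HH_2(\tilde Y_l,\Z)\to(\Z/2\Z)^{16}$, $\alpha\mapsto(\alpha\cdot E_i\bmod 2)_i$, is large enough that the index-$2$ subgroup $\HH_1(\tilde S_l\setminus E',\Z)$ vanishes. A crucial ingredient is the line bundle $\mathcal L$ with $\mathcal L^{\otimes 2}\cong\mathcal{O}(E)$ defining the double cover, which automatically places $(1,\ldots,1)$ in the image of that Gysin map; producing the remaining generators of $(\Z/2\Z)^{16}$ requires additional classes on $\tilde Y_l$, and this is where care is needed. An alternative route, perhaps cleaner, is to recognize $S_l$ as a degeneracy locus in the simply connected Fano variety $F(X)$ and apply a Sommese-type connectedness theorem to pass $\pi_1=0$ from $F(X)$ directly to $S_l$.
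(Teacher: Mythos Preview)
Your approach is genuinely different from the paper's, and the obstacle you flag is real and, as far as I can see, not easily removable.

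The paper never touches $Y_l$ in the proof. Instead it realizes the blow-up of $S_l$ at the single point $[l]$ as a $(2\xi+3f,\,3\xi+3f)$ complete intersection inside the $\PP^3$-bundle $Z=\PP(T_X|_l)$ over $l\cong\PP^1$. After a careful analysis of the base loci of the linear systems $|2\xi+3f|$ and $|3\xi+3f|$ (showing they define immersions off small rational loci), it applies the Goresky--MacPherson Lefschetz hyperplane theorem for quasi-projective varieties twice to pass $\HH_1=0$ from the ambient bundle down to $S_l\setminus\{[l]\}$. This avoids any appeal to the involution or to the nodal quintic.

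Your route, by contrast, feeds everything through the double cover $S_l\to Y_l$. The first two paragraphs are fine: $\pi_1(\tilde Y_l)=0$ via simultaneous resolution of $A_1$-singularities, hence $\pi_1(Y_l)=0$. The trouble is exactly where you put it. Writing $Y^\circ=Y_l\setminus\{y_1,\dots,y_{16}\}=\tilde Y_l\setminus E$, what you actually need is $\pi_1(Y^\circ)\cong\Z/2\Z$, so that its index-$2$ subgroup $\pi_1(S_l\setminus\{x_i\})=\pi_1(S_l)$ is trivial. Van Kampen only tells you that $\pi_1(Y^\circ)$ is normally generated by sixteen elements $m_i$ with $m_i^2=1$; passing to $\HH_1$ and computing the cokernel of $\HH_2(\tilde Y_l)\to(\Z/2\Z)^{16}$ would at best give $\HH_1(Y^\circ)$, not $\pi_1(Y^\circ)$, and even $\HH_1(Y^\circ)=\Z/2\Z$ does not force $\HH_1$ of the index-$2$ subgroup to vanish. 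Worse, the size of that cokernel is exactly the datum $N$ of Proposition~\ref{failure of PD} and Remark~\ref{remark on Leray}, and in this paper $N\cong\Z/2\Z$ is established (Theorem~\ref{thm on surface with involution}(iii)) \emph{assuming} $\HH_1(S,\Z)=0$. So pulling that in here would be circular; you would have to compute the code of the sixteen nodes on the quintic $Y_l$ by some independent means, and then still upgrade the homological statement to a fundamental-group statement.

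Your suggested fallback---a Sommese-type theorem for $S_l\subset F(X)$---runs into the same issue the paper flags at the end of Section~6 for conic bundles: the relevant bundle is not ample, so Sommese does not apply directly. The paper's projective-bundle/Lefschetz argument is designed precisely to get around this.
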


Let $p_l:\mathscr{C}_l\to S_l$ be the total space of lines meeting
$l$ and $q_l:\mathscr{C}_l\to X$ be the natural morphism. Let
$$
\Phi_l=(p_l)_*(q_l)^*:\HH^4(X,\Z)\to\HH^2(S_l,\Z)
$$
be the associated Abel-Jacobi homomorphism and
$$
\Psi_l=(q_l)_*(p_l)^*:\HH^2(S_l,\Z)\to\HH^4(X,\Z)
$$
be the associated cylinder homomorphism. Similarly we can define the
Abel-Jacobi and cylinder homomorphisms for the Chow groups. By abuse
of notations, we will still use $\Phi_l$ and $\Psi_l$ to denote
them, i.e.
$$
\Phi_l=(p_l)_*(q_l)^*:\CH_i(X)\to \CH_{i-1}(S_l),
$$
and
$$
\Psi_l=(q_l)_*(p_l)^*:\CH_i(S_l)\to\CH_{i+1}(X).
$$
Note that the action $\sigma$ induces an involution on
$\HH^2(S_l,\Z)$ and also on $\CH_i(S_l)$ via pull-back. We will still
use $\sigma$ to denote this action. On $S_l$, we have two natural
divisor classes $g$ and $g'$. The class $g$ is the restriction of
$\lambda_0$. The class $g'$ is defined as follows. For any point
$x\in l$, all lines passing through $x$ form a curve $C_x\subset
S_l$. We define $g'$ to be the class of the curve $C_x$.

\begin{defn}
The primitive cohomology of $S_l$ is defined to be
$$
\HH^2(S_l,\Z)_{\mathrm{prim}}=\{\alpha\in\HH^2(S_l,\Z):\alpha\cup
g=0,\alpha\cup g'=0\}
$$
\end{defn}
Note that on the group $\mathrm{Br}_2(X)$, we have
$$
\Phi_l=(p_l)_*(q_l)^*:\mathrm{Br}_2(X)\rightarrow
\HH_\mathcal{D}^3(S_l,\Z(1))_\mathrm{tor} =\mathrm{Br}(S_l).
$$
Now we state the main theorem of this section.

\begin{thm}\label{prym construction}
Let notations be as above, then the following are true.

(i) The action $\sigma$ preserves primitive classes on $S_l$, i.e.
$$
\sigma:\HH^2(S_l,\Z)_{\mathrm{prim}}\to\HH^2(S_l,\Z)_{\mathrm{prim}}
$$

(ii) The Abel-Jacobi homomorphism induces an isomorphism
$$
\Phi_l: \HH^4(X,\Z)_{\mathrm{prim}}\rightarrow
\mathrm{Pr}(\HH^2(S_l,\Z)_{\mathrm{prim}},\sigma)(-1)
$$
of Hodge structures that respects the bilinear forms. This means
that for any $\alpha,\beta\in\HH^4(X,\Z)_{\mathrm{prim}}$, we have
$$
\langle\Phi_l(\alpha),\Phi_l(\beta)\rangle=-(\alpha\cdot\beta)_X.
$$

(iii) For the Chow groups, let $\mathrm{A}_1(X)$ be the Chow group
of 1-cycles on $X$ of degree 0. Let $\mathrm{A}_0(S_l)$ be the Chow
group of 0-cycles on $S_l$ of degree 0. Then the Abel-Jacobi
homomorphism also induces an isomorphism
$$
\Phi_l:\mathrm{A}_1(X)\to \mathrm{Pr}(\mathrm{A}_0(S_l),\sigma)
$$
of abelian groups. In particular, $\mathrm{A}_1(X)$ is uniquely
divisible.

(iv) For the ``Brauer groups", we have that $\Phi_l$ fits into an
exact sequence
$$
\xymatrix{
 0\ar[r] &K\ar[r] &\mathrm{Br}_2(X)\ar[r]^{\Phi_l\quad} &\mathrm{Pr}(\mathrm{Br}(S_l),\sigma)\ar[r] &0,
}
$$
where $K\cong\HH^1(G,T^2(S_l))$.

(v) The group $K\subset\mathrm{Br}_2(X)$ is independent of the
choice of the general line $l\subset X$.
\end{thm}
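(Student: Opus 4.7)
The plan is to exhibit $K_l \subset \mathrm{Br}_2(X)$ as a subgroup that is locally constant as $l$ varies in the connected open subset $U \subset F$ of lines for which the hypotheses of Theorem \ref{prym construction} are satisfied. Since $F$ is a smooth irreducible fourfold, $U$ is connected. It would suffice to show that for each $l_0 \in U$ the subgroup $K_l$ equals $K_{l_0}$ on a dense subset of a small analytic neighborhood $V$ of $l_0$; connectedness of $U$ then forces a single value of $K_l$ for general $l$.

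Over $V$, the surfaces $S_l$ and the incidence correspondences $\mathscr{C}_l$ fit into smooth families. Parallel transport trivializes the lattice $\HH^2(S_l,\Z)$ together with its $G$-action and carries the classes $g,g'$ to fixed classes, and hence the saturated $G$-submodule $M$ spanned by them. Under these identifications, the Abel--Jacobi map $\Phi_l:\HH^4(X,\Z)\to \HH^2(S_l,\Z)$ is locally constant (as $\mathscr{C}_l$ varies flatly), so the isomorphism
$$\tilde\Phi_l:\ T(\HH^4(X,\Z)_{\mathrm{prim}})\otimes\Q/\Z\ \xrightarrow{\sim}\ T((M^\perp)^{\sigma=-1})\otimes\Q/\Z$$
appearing in the proof of part (iii) is independent of $l$; the fixed sublattices $T(Q_N)$ and $T(Q_M)^{\sigma=-1}$ are similarly independent of $l$.

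Next, I would invoke the snake-lemma diagram used to prove (iv) to exhibit $K_l$ as the preimage under $\tilde\Phi_l^{-1}$ of the summand $\HH^1(G,T^2(S_l))$ of the kernel $K_l\oplus T(Q_M)^{\sigma=-1}$, taken modulo $T(Q_N)$. This reduces the problem to showing that the subgroup $\HH^1(G,T^2(S_l))\subset T((M^\perp)^{\sigma=-1})\otimes\Q/\Z$ is the same for general $l$. By Noether--Lefschetz, on a dense subset $V'\subset V$ the Picard lattice $\Pic(S_l)=\mathrm{Hdg}^2(S_l)$ equals a fixed monodromy-invariant sublattice, so the $G$-module $T^2(S_l)=\HH^2(S_l,\Z)/\mathrm{Hdg}^2(S_l)$ and its group cohomology are constant on $V'$. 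Transporting back yields that $K_l\subset\mathrm{Br}_2(X)$ is the same subgroup for all $l\in V'$, and by connectedness of $U$ it is the same for all general lines.

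The main obstacle will be the bookkeeping in the second step, namely confirming that every term in the snake-lemma description of $K_l$ is transported flatly and identified with its counterpart at $l_0$, so that the snake isomorphism is genuinely independent of $l$. An important subtlety is that on proper analytic subsets of $V$ where $\Pic(S_l)$ jumps up, $T^2(S_l)$ shrinks and $K_l$ may be strictly larger; the assertion is only true generically, which is why restricting to the Noether--Lefschetz generic locus is essential.
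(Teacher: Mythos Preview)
Your deformation/monodromy approach is genuinely different from the paper's, and it carries a real gap that you yourself flag in the last paragraph. The theorem asserts that $K_l$ is the same subgroup of $\mathrm{Br}_2(X)$ for every $l$ in the Zariski-open locus where $S_l$ is smooth with its $16$ isolated fixed points --- not merely for Noether--Lefschetz-generic $l$. Because your argument routes through $T^2(S_l)=\HH^2(S_l,\Z)/\mathrm{Hdg}^2(S_l)$, you are forced to restrict to the complement of a countable union of Hodge loci, and you have no mechanism to extend the conclusion back to lines where $\Pic(S_l)$ jumps. Your suspicion that $K_l$ ``may be strictly larger'' on such loci is in fact false (the paper's argument rules it out), but your method cannot see this; so as written you prove a strictly weaker statement.

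The paper avoids Hodge-jumping entirely by a direct comparison. For any two general lines $l_1,l_2$ the incidence correspondence $\Gamma_{12}\subset S_1\times S_2$ induces $[\Gamma_{12}]_*=\Phi_2\circ\Psi_1$, which factors through an isomorphism $\gamma_{12}:P_1\to P_2$ of Prym lattices. One computes explicitly that $Q_{M_i}\cong(\Z/5\Z)_{+}\oplus(\Z/3\Z)_{-}$ and feeds this into the diagram of Proposition~\ref{exact sequence on Prym of Brauer} to obtain a map $\bar\gamma_{12}:\mathrm{Pr}(\mathrm{Br}(S_1),\sigma_1)\to\mathrm{Pr}(\mathrm{Br}(S_2),\sigma_2)$ with $\bar\gamma_{12}\circ\Phi_1=\Phi_2$. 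The punchline is a torsion argument: $\ker(\bar\gamma_{12})$ must be $2$-torsion (it sits in $\ker\Phi_2$) yet also $3$-torsion (from the snake lemma and the structure of $T(Q_{M_i})^{\sigma=-1}$), hence zero. Thus $\bar\gamma_{12}$ is an isomorphism and $K_1=\ker\Phi_1=\ker(\bar\gamma_{12}\circ\Phi_1)=\ker\Phi_2=K_2$. This never invokes the variable Hodge structure of $S_l$ beyond the fixed sublattice $M$, so it works uniformly for all general $l$ --- exactly what your approach cannot deliver.
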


\begin{proof}
The main idea of the proof follows that of \cite[Theorem 3.5]{pt}.
We give a sketch here, for more details we refer to \cite{pt}. Let
$\{l_t\subset X:t\in T\}$ be a 1-dimensional family of lines on $X$
with $l_0=l$ for some closed point $0\in T$. Let $I\subset F\times
F$ be the incidence correspondence. Let
$$
I_t=I|_{S_{l_t}\times S_l}\subset \CH^2(S_{l_t}\times S_l)
$$
be the restriction of the incidence correspondence. Let
$$
I_0=\lim_{t\to 0}I_t\in\CH^2(S_l\times S_l).
$$
By definition, $I_0$ induces the homomorphism $\Phi_l\circ\Psi_l$ on
the cohomology, the Chow groups and the Brauer group. To give a
description of $I_0$, we denote $v=\frac{d}{dt}|_{t=0}\in
\HH^0(l,\mathscr{N}_{l/X})$. Assume that the incidence lines of the
pair $(l_t,l)$, i.e. lines meeting both $l$ and $l_t$ (see
\cite{relations} for a more precise definition, where we use the
terminology of secant line instead), specialize to
$E_1,\ldots,E_5\in S_l$. For each $[l']\in S_l-[l]$, we can
canonically associate a linear $\PP^3\subset\PP^5$ passing through
$x=l'\cap l$ in the following way. If $l'\neq E_i$, $\forall
i=1,\ldots,5$, then we take the linear span of $l'$, $l$ and $v_x$
to be the $\PP^3$ associated to $l'$; If $l'=E_i$ for some $i$, then
we take the corresponding $\PP^3$ to be the linear span of $l$,
$v_x$ and the $\calO(1)$-direction $\mathscr{N}^+_{E_i/X}$ of
$\mathscr{N}_{E_i/X}$. This linear $\PP^3$ will be denoted by
$\Pi_{l'}$. There are 6 lines, denoted $\{l,l',L_1,L_2,L_3,L_4\}$,
on $X$ that pass through $x$ and lie on $\Pi_{l'}$. Then $I_0$ is
generically defined by
$$
\rho:[l']\mapsto\sigma([l'])+[L_1]+[L_2]+[L_3]+[L_4].
$$
This means that $I_0$ is represented by the closure
$\bar\Gamma_\rho$ of the graph of the above multiple valued map; see
\cite[Definition 3.2]{pt}. We can also define a correspondence
$\Gamma_v$ by sending $l'$ to the lines contained in $\Pi_{l'}$,
i.e.
$$
\Gamma_v: [l']\mapsto [l']+[L_1]+[L_2]+[L_3]+[L_4].
$$
Hence we get the following key identity
\begin{equation}\label{eq key identity I0}
I_0 = \Gamma_\sigma - \Delta_{S_l} + \Gamma_v
\end{equation}
Let $\tilde{S}_l$ be the blow up of $S_l$ at the point $[l]$. Then
we get a fibration structure $\pi:\tilde{S}_l\rightarrow l$, which
extends the map $[l']\mapsto x=l'\cap l$. The exceptional divisor of
the blow-up is mapped isomorphically onto $l$. Hence we can identify
the exceptional divisor with $l$. For $x\in l$, we can define a
linear $\PP^3$ to be the linear span of $l$, $v_x$ and the
$\calO(1)$-direction $\mathscr{N}^+_{l/X}$ of $\mathscr{N}_{l/X}$.
There are again 6 lines, denoted $2l,L_{x,1},\ldots,L_{x,4}$, on $X$
passing through $x$ that lie in the linear $\PP^3$. The rule
$$
x\mapsto \sigma([l])+[L_{x,1}]+[L_{x,2}]+[L_{x,3}]+[L_{x,4}]
$$
extends $\rho$ to a 1-to-5 multiple valued map $\tilde\rho$ from
$\tilde{S}_l$ to $S_l$. When $x$ runs through $l$, the points
$\{L_{x,1},\ldots,L_{x,4}\}$ traces out a divisor $D\subset S_l$.
Hence we have
\begin{equation}\label{explicit expression for I_0}
\bar\Gamma_\rho=\{([l_1],[l_2]):[l_1]\neq [l],\,[l_2]\in\rho([l_1])
\}\cup\{[l]\}\times D \subset S_l\times S_l.
\end{equation}

\textit{Claim 1}: The divisor class of $D$ is equal to
$\sigma(C_x)$. The class $g$ can be written as $g=2C_x+\sigma(C_x)$.
[Also see Lemme 2 in \S3 of \cite{voisin}.]

This can be seen as follows. By definition we have
$D|_{C_x}+2[l]=K_{C_x}$ since we know that $C_x$ is a
$(2,3)$-complete intersection in $\PP(T_{X,x})=\PP^3$. We can embed
$C_x\subset\PP^1\times\PP^1$. We see that $\sigma(C_x)\cap C_x$
consists of points on $C_x$ which are on the same horizontal or
vertical rulings as $[l]$. This implies that $\sigma(C_x)|_{
C_x}+2[l]$ is the restriction of the $(1,1)$ class on
$\PP^1\times\PP^1$. Hence by the adjunction formula we have
$\sigma(C_x)|_{C_x}+2[l]=K_{C_x}$. This gives
$(D-\sigma(C_x))|_{C_x}=0$ for all general $x\in C_x$. This forces
$D$ to be the same as $\sigma(C_x)$. The class
$\Psi_l(C_x)\in\CH_2(X)$ is represented by the surface swept by all
lines passing through $x$. It is known that is class is $2h^2$,
where $h$ is the hyperplane, see \cite[Lemma 3.26]{pt}. Then it
follows that
\begin{align*}
 2g &= \Phi_l\circ\Psi_l(C_x)\\
  &= (I_0)_*C_x=(\bar\Gamma_\rho)_*C_x\\
  &= \sigma(C_x) + 4 C_x +D.
\end{align*}
Hence we get $g=2C_x+\sigma(C_x)$.

\textit{Claim 2:} The primitive lattice
$\HH^2(S_l,\Z)_{\mathrm{prim}}$ is simply the orthogonal complement
of $M=\Z C_x\oplus\Z C_x^\sigma$, where $C_x^\sigma=\sigma(C_x)$. As
a $G=\Z[\sigma]$-module, we have
$$
 \HH^1(G,\HH^2(S_l,\Z)_\mathrm{prim})=0.
$$

This follows from Claim 1 and Proposition \ref{module structure of
prym}. Here we note that $M$ is saturated in $\HH^2(S,\Z)$ and
$M\cong\Z[G]$. This follows from the fact that $\det(M)=-15$ which
is square-free. Indeed, we have $(C_x)^2=(C_x^\sigma)^2=1$ and
$C_x\cdot C_x^\sigma=4$ and hence $\det(M)=1\cdot 1- 4\cdot 4=-15$.

The statement (i) is automatic since
$\HH^2(S_l,\Z)_\mathrm{prim}=M^\perp$. As in the proof of
\cite[Theorem 3.5]{pt}, we can show that the action of $\Gamma_v$ on
the primitive cohomology or the Chow group of degree zero cycles is
trivial; see \cite[Lemma 3.24]{pt}. Then the above equation
\eqref{eq key identity I0} shows that $\Phi_l\circ\Psi_l=\sigma-1$
on primitive cohomology and Chow groups. The proof of the theorem
relies on a second identity $\Psi_l\circ\Phi_l=-2$ which holds on
the primitive part of both cohomology and Chow groups. This was
proved in \cite[Theorem 2.2]{pt}. Then the theorem follows from
Theorem \ref{thm on prym construction}, except (iii). Since
$\mathrm{A}_1(X)$ is generated by lines (\cite[\S4]{paranjape}, also
see \cite[Corollary 4.3]{relations}), we know that the natural
cylinder homomorphism $\mathrm{A}_0(F)\rightarrow \mathrm{A}_1(X)$
is surjective. By the theorem of Roitman \cite{roitman}, the group
$\mathrm{A}_0(F)$ is uniquely divisible. Hence $\mathrm{A}_1(X)$ is
divisible. It follows that in the isomorphism of (ii) of Theorem
\ref{thm on prym construction}, there is no 2-torsion part.

It still remains to show that the group $K$ is independent of the
choice of the line $l\subset X$. The proof is quite similar to that
of \cite[Proposition 4.7]{pt}. Let $l_1, l_2\subset X$ be two
different general lines. Set $S_i=S_{l_i}$ with the involution
$\sigma_i$, $i=1,2$. Let $\Phi_i$ and $\Psi_i$, $i=1,2$ be the
corresponding Abel-Jacobi and cylinder homomorphisms. Then the
incidence correspondence $\Gamma_{12}\subset S_1\times S_2$ induces
$$
[\Gamma_{12}]_*=\Phi_2\circ\Psi_1:\HH^2(S_1,\Z)\rightarrow\HH^2(S_2,\Z).
$$
On the primitive cohomology, the homomorphism $[\Gamma_{12}]_*$
naturally factors into
$$
\xymatrix{
 \HH^2(S_1,\Z)_\mathrm{prim}\ar[r]^{\qquad\sigma_1-1} &P_1\ar[r]^{\gamma_{12}} &P_2\ar[r] &\HH^2(S_2,\Z)_{\mathrm{prim}},
}
$$
where $P_i=\mathrm{Pr}(\HH^2(S_i,\Z)_\mathrm{prim},\sigma_i)$ and
the induced map $\gamma_{12}$ is an isomorphism. Let
$M_i\subset\Pic(S_i)$ be the natural submodule. We can compute
explicitly that
$$
Q_{M_i}\cong(\Z/5\Z)_{+}\oplus (\Z/3\Z)_{-}.
$$
By Proposition \ref{exact sequence on Prym of Brauer}, we have
$$
\xymatrix{
 0\ar[r] &T(Q_{M_1})^{\sigma_1=-1}\ar[r]\ar[d] &T(P_1)\otimes\Q/\Z\ar[r]\ar[d]^{\gamma_{12}}
 &\mathrm{Pr}(\mathrm{Br}(S_1),\sigma_1)\ar[r]\ar[d]^{\bar{\gamma}_{12}} &0\\
 0\ar[r] &T(Q_{M_2})^{\sigma_2=-1}\ar[r] &T(P_2)\otimes\Q/\Z\ar[r] &\mathrm{Pr}(\mathrm{Br}(S_2),\sigma_2)\ar[r] &0
},
$$
where the vertical arrow in the middle is an isomorphism. Hence
$\bar{\gamma}_{12}$ is surjective. The composition of
$\Phi_1:\mathrm{Br}_2(X)\rightarrow
\mathrm{Pr}(\mathrm{Br}(S_1),\sigma_1)$ and $\bar{\gamma}_{12}$ is
equal to $\Phi_2$. It follows that $\ker(\bar{\gamma}_{12})$ is of
2-torsion. Since the groups in the first column are of 3-torsion, we
know that $\ker(\bar{\gamma}_{12})$ can only have 3-torsion. This
forces $\bar{\gamma}_{12}$ to be injective and hence an isomorphism.
Then we have
\begin{align*}
 K_1 &=\ker\{\Phi_1:\mathrm{Br}_2(X)\rightarrow \mathrm{Pr}(\mathrm{Br}(S_1),\sigma_1)\}\\
 &=\ker\{\bar{\gamma}_{12}\circ\Phi_1:\mathrm{Br}_2(X)\rightarrow \mathrm{Pr}(\mathrm{Br}(S_2),\sigma_2)\}\\
 &=\ker\{\Phi_2:\mathrm{Br}_2(X)\rightarrow \mathrm{Pr}(\mathrm{Br}(S_2),\sigma_2)\}\\
 &=K_2.
\end{align*}
This finishes the proof of the Theorem.
\end{proof}

\begin{cor}
For the transcendental lattices, the Abel-Jacobi map
$$
\Phi_l:\HH^4(X,\Z)_\mathrm{tr}\rightarrow\HH^2(S_l,\Z)_\mathrm{tr}^{\sigma=-1}
$$
is an isomorphism and
$$
\mathrm{coker}\{\Psi_l:\HH^2(S_l,\Z)_\mathrm{tr}\rightarrow
\HH^4(X,\Z)_\mathrm{tr}\}\cong K
$$
as abstract groups.
\end{cor}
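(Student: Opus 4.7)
The plan is to prove the two claims in order: first by restricting the Hodge-structure isomorphism of the main theorem to transcendental sublattices, and then by combining the identity $\Phi_l\circ\Psi_l=\sigma-1$ with Pontryagin duality applied to the Brauer-group part of the main theorem.

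For the first statement, I would restrict the isomorphism $\Phi_l\colon\HH^4(X,\Z)_\mathrm{prim}\to\mathrm{Pr}(\HH^2(S_l,\Z)_\mathrm{prim},\sigma)(-1)$ of Theorem \ref{prym construction}(ii) to the transcendental sublattices. Since $h^2$ is a Hodge class, $\HH^4(X,\Z)_\mathrm{tr}\subset\HH^4(X,\Z)_\mathrm{prim}$; and since $M\subset\mathrm{Hdg}^2(S_l)$, we have $\HH^2(S_l,\Z)_\mathrm{tr}\subset M^\perp=\HH^2(S_l,\Z)_\mathrm{prim}$, while $\mathrm{Pr}(M^\perp,\sigma)=(M^\perp)^{\sigma=-1}$ by Claim 2 in the proof of Theorem \ref{prym construction}. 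The key point is that $\Phi_l$, being a morphism of polarized Hodge structures preserving (up to sign) the intersection pairings, carries Hodge classes to Hodge classes; non-degeneracy of these pairings on the Hodge parts (Hodge index on $S_l$) then forces the orthogonal complements — the transcendental parts — to correspond bijectively. This yields the isomorphism $\Phi_l\colon\HH^4(X,\Z)_\mathrm{tr}\xrightarrow{\sim}\HH^2(S_l,\Z)_\mathrm{tr}^{\sigma=-1}$.

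For the cokernel, I would first use the identity $\Phi_l\circ\Psi_l=\sigma-1$ on $M^\perp$, restricted to $\HH^2(S_l,\Z)_\mathrm{tr}\subset M^\perp$; combining this with the isomorphism in (i) gives
\[
\mathrm{coker}\{\Psi_l\colon\HH^2(S_l,\Z)_\mathrm{tr}\to\HH^4(X,\Z)_\mathrm{tr}\}\;\cong\;\HH^2(S_l,\Z)_\mathrm{tr}^{\sigma=-1}\big/(\sigma-1)\HH^2(S_l,\Z)_\mathrm{tr}\;=\;\HH^1(G,\HH^2(S_l,\Z)_\mathrm{tr}).
\]
To identify this finite $2$-torsion group with $K$, I would apply Pontryagin duality. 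Dualizing the short exact sequence $0\to\mathrm{im}(\Psi_l)\to\HH^4(X,\Z)_\mathrm{tr}\to\mathrm{coker}(\Psi_l)\to 0$ by $\Hom(-,\Q/\Z)$, using the descriptions $\mathrm{Br}_2(X)=\Hom(\HH^4(X,\Z)_\mathrm{tr},\Q/\Z)$ and $\mathrm{Br}(S_l)=\Hom(\HH^2(S_l,\Z)_\mathrm{tr},\Q/\Z)$ from the remark after Definition \ref{defn of second Brauer group}, together with the cohomological adjunction $\Psi_l^\vee=\Phi_l$ provided by the projection formula, gives
\[
\mathrm{coker}(\Psi_l)^\vee\;\cong\;\ker\{\Phi_l\colon\mathrm{Br}_2(X)\to\mathrm{Br}(S_l)\}.
\]
Since the image of $\Phi_l$ on Brauer groups lies in $\mathrm{Pr}(\mathrm{Br}(S_l),\sigma)$, Theorem \ref{prym construction}(iv) identifies this kernel with $K$. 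Because $K$ is a finite abelian group, Pontryagin dualizing once more gives $\mathrm{coker}(\Psi_l)\cong K$ as abstract groups.

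The main obstacle is the verification that $\Psi_l^\vee$ really coincides with $\Phi_l$ under the Brauer-group identifications above. Morally this is the cohomological projection formula $\Psi_l(\beta)\cdot\alpha=\beta\cdot\Phi_l(\alpha)$, but one must check its compatibility with the canonical isomorphisms $T^2(\cdot)\otimes\Q/\Z\cong\Hom(\HH^\bullet(\cdot,\Z)_\mathrm{tr},\Q/\Z)$ induced by the unimodular intersection pairings on $\HH^2(S_l,\Z)$ and $\HH^4(X,\Z)$. Once that is settled, the corollary follows formally from the main theorem.
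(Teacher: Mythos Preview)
Your proposal is correct, but it proceeds in the \emph{opposite order} from the paper and uses a genuinely different argument for the first statement.

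For the isomorphism on transcendental lattices, you restrict the Hodge isometry of Theorem~\ref{prym construction}(ii) to transcendental sublattices, using that an isomorphism of polarized Hodge structures carries Hodge classes bijectively to Hodge classes and hence their orthogonal complements to one another. This is clean and conceptual. The paper instead deduces this isomorphism \emph{last}, by a counting argument: it first proves the cokernel statement via Pontryagin duality, then observes that both the quotient $\HH^2(S_l,\Z)_\mathrm{tr}^{\sigma=-1}/\Phi_l\Psi_l(\HH^2(S_l,\Z)_\mathrm{tr})$ and the quotient $\HH^4(X,\Z)_\mathrm{tr}/\Psi_l(\HH^2(S_l,\Z)_\mathrm{tr})$ have size $|K|$, forcing the injection $\Phi_l$ to be surjective. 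Your route is more transparent about \emph{why} the map is an isomorphism (it is a Hodge isometry), whereas the paper's route avoids the small check that the transcendental part of $(M^\perp)^{\sigma=-1}$ really equals $\HH^2(S_l,\Z)_\mathrm{tr}^{\sigma=-1}$ (which you should state explicitly; it follows because any element of $(M^\perp)^{\sigma=-1}$ is automatically orthogonal to $M$ and to $(M^\perp)^{\sigma=1}$).

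For the cokernel, your argument has a redundancy. Once you have established $\mathrm{coker}(\Psi_l)\cong\HH^1(G,\HH^2(S_l,\Z)_\mathrm{tr})$ from the identity $\Phi_l\Psi_l=\sigma-1$ and part~(i), you are already done: Theorem~\ref{prym construction}(iv) identifies $K$ with $\HH^1(G,T^2(S_l))$, and $T^2(S_l)\cong\HH^2(S_l,\Z)_\mathrm{tr}$ as $G$-modules since they are $\Z$-dual lattices. The Pontryagin-duality detour you then take is exactly what the paper does to prove the cokernel statement \emph{from scratch}, but in your ordering it is unnecessary. Your stated ``main obstacle'' (that $\Psi_l^\vee=\Phi_l$ on Brauer groups) is thus not actually needed in your own approach; the paper, by contrast, does use this identification and treats it as immediate from the correspondence formalism.
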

\begin{proof}
We have the natural identifications
$$
\mathrm{Br}_2(X)=\Hom(\HH^4(X,\Z)_\mathrm{tr},\Q/\Z),\quad
\mathrm{Pr}(\mathrm{Br}(S_l),\sigma)=\Hom(\HH^2(S_l,\Z)_\mathrm{tr},\Q/\Z)^{\sigma=-1}.
$$
The Abel-Jacobi homomorphism $\Phi_l:\mathrm{Br}_2(X)\rightarrow
\mathrm{Pr}(\mathrm{Br}(S_l),\sigma)$ is induced by
$$
\Psi_l:\HH^2(S_l,\Z)_\mathrm{tr}\rightarrow\HH^4(X,\Z)_\mathrm{tr}.
$$
By applying $\Hom(-,\Q/\Z)$, we see that $K$ is isomorphic to the
cokernel of $\Psi_l$ on the transcendental lattice. Note that
$T^2(S_l)$ and $\HH^2(S_l,\Z)_\mathrm{tr}$ are isomorphic as
$G$-modules. By definition, We have
\begin{align*}
K &=\HH^1(G,T^2(S_l))\\
&\cong\HH^1(G,\HH^2(S_l,\Z)_\mathrm{tr}) \\
&= \HH^2(S_l,\Z)_\mathrm{tr}^{\sigma=-1}/(\sigma-1)\HH^2(S_l,\Z)_\mathrm{tr}\\
&= \HH^2(S_l,\Z)_\mathrm{tr}^{\sigma=-1}/\Phi_l\Psi_l(\HH^2(S_l,\Z)_\mathrm{tr})
\end{align*}
as abstract groups. This shows that in the sequence of inclusions
$$
\Psi_l(\HH^2(S_l,\Z)_\mathrm{tr})\hookrightarrow
\HH^4(X,\Z)_\mathrm{tr}\overset{\Phi_l}\hookrightarrow
\HH^2(S_l,\Z)_\mathrm{tr}^{\sigma=-1},
$$
the quotient of the last term by the first has the same size as that
of the second by the first. Thus the second inclusion, which is the
same as $\Phi_l$, is actually an equality.
\end{proof}

It is not clear yet whether $K\neq 0$ actually happens. The
following proposition shows that one expects $K=0$ to happen
generically.
\begin{prop}
Let $X$ be a smooth cubic fourfold and $l\subset X$ a general line.
Let $Y_l$ be the quotient of $S_l$ be the involution $\sigma$. If
$\rk\Pic(Y_l)=1$ and $\rk\Pic(S_l)\leq 3$, then $K=0$, i.e.
$$
\Phi_l:\mathrm{Br}_2(X)\rightarrow\mathrm{Pr}(\mathrm{Br}(S_l),\sigma)
$$
is an isomorphism.
\end{prop}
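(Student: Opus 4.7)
The plan is to apply Theorem \ref{prym construction}(iv), which identifies $K$ with $\HH^1(G,T^2(S_l))$, reducing the proposition to showing $\HH^1(G,T^2(S_l))=0$. Since $\HH_1(S_l,\Z)=0$, the group $\HH^2(S_l,\Z)$ is torsion free and $\mathrm{Hdg}^2(S_l)$ coincides with $\Pic(S_l)$. The short exact sequence
$$
\xymatrix{0\ar[r] &\Pic(S_l)\ar[r] &\HH^2(S_l,\Z)\ar[r] &T^2(S_l)\ar[r] &0,}
$$
together with the vanishing $\HH^1(G,\HH^2(S_l,\Z))=0$ from Theorem \ref{thm on surface with involution}(v), produces an injection $\HH^1(G,T^2(S_l))\hookrightarrow\HH^2(G,\Pic(S_l))$. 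My first step is therefore to reduce to showing $\HH^2(G,\Pic(S_l))=0$.

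Next I would use the rank hypotheses to pin down $\Pic(S_l)$ as a $G$-module. By Claim 2 in the proof of Theorem \ref{prym construction}, the submodule $M=\Z C_x\oplus\Z C_x^\sigma\subset\Pic(S_l)$ is saturated and isomorphic to $\Z[G]$. Since $\rk M=2$ and $\rk\Pic(S_l)\leq 3$, only two cases occur. If $\rk\Pic(S_l)=2$, then $\Pic(S_l)=M\cong\Z[G]$, and $\HH^2(G,\Z[G])=0$ immediately. If $\rk\Pic(S_l)=3$, I pick $D\in\Pic(S_l)$ with $\Pic(S_l)=M\oplus\Z D$ as abelian groups, and write $\sigma(D)=\alpha C_x+\beta C_x^\sigma+\gamma D$. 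The relation $\sigma^2=1$ forces $\gamma\in\{\pm 1\}$ with $\beta=-\gamma\alpha$, and a short check shows $\rk\Pic(S_l)^G=2$ when $\gamma=1$ and $\rk\Pic(S_l)^G=1$ when $\gamma=-1$.

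I expect the main obstacle to be translating the hypothesis $\rk\Pic(Y_l)=1$ into the rank equality $\rk\Pic(Y_l)=\rk\Pic(S_l)^G$, which will rule out $\gamma=1$. The plan is to pass to the étale double cover $\tilde\pi:\tilde S_l\to\tilde Y_l$ obtained from the blow-up of the 16 fixed points and the minimal resolution of $Y_l$; étale descent gives $\rk\Pic(\tilde Y_l)=\rk\Pic(\tilde S_l)^G$, and subtracting the contributions of the 16 exceptional curves on each side yields the desired equality. Once $\gamma=-1$ is established, so that $\sigma(D)=\alpha(C_x+C_x^\sigma)-D$ for some $\alpha\in\Z$, a direct computation in the basis $\{C_x,C_x^\sigma,D\}$ gives $\ker(1-\sigma)=\Z(C_x+C_x^\sigma)=\mathrm{Im}(1+\sigma)$ in $\Pic(S_l)$. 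This yields $\HH^2(G,\Pic(S_l))=0$ and completes the proof.
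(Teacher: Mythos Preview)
Your overall strategy---reducing $K=0$ to $\HH^2(G,\Pic(S_l))=0$ via the connecting map from $\HH^1(G,T^2(S_l))$---is correct and in fact more streamlined than the paper's argument. The paper instead works with the orthogonal decomposition $\Pic(S_l)\oplus\HH^2(S_l,\Z)_\mathrm{tr}\subset\HH^2(S_l,\Z)$ and bounds $\HH^1(G,\HH^2(S_l,\Z)_\mathrm{tr})$ by computing the intersection matrix on $\Pic(S_l)$ and the $2$-primary invariant part of the discriminant group $\Pic(S_l)^*/\Pic(S_l)$ explicitly; your route bypasses that intersection-form calculation entirely. Your final computation in the rank-$3$ case, showing $\ker(1-\sigma)=\mathrm{Im}(1+\sigma)$ once $\gamma=-1$, is also correct.

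There is, however, a genuine error in how you translate the hypothesis $\rk\Pic(Y_l)=1$. The map $\tilde\pi:\tilde S_l\to\tilde Y_l$ is \emph{not} \'etale: blowing up the $16$ fixed points makes each exceptional curve $E'_i$ pointwise fixed by $\sigma$ (this is stated at the start of Section~2), so $\tilde\pi$ is a double cover branched along $\sum E'_i$. Hence ``\'etale descent'' does not apply as written. The rank equality $\rk\Pic(\tilde Y_l)=\rk\Pic(\tilde S_l)^G$ you want is nonetheless true: it follows from $\HH^2(\tilde Y_l,\Q)\cong\HH^2(\tilde S_l,\Q)^G$ (valid for any finite quotient of smooth projective varieties) together with the Lefschetz $(1,1)$ theorem, and your subtraction of the $16$ exceptional contributions on each side then goes through. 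Alternatively---and this is what the paper does---you can avoid the blow-ups altogether: for any $\fa\in\Pic(S_l)$ represented by a divisor missing the fixed points, the push-pull formula gives $\fa+\sigma(\fa)=\pi^*\pi_*\fa\in\pi^*\Pic(Y_l)=\Z(C_x+C_x^\sigma)$, since $\pi^*\lambda=C_x+C_x^\sigma$. This forces $D+\sigma(D)\in M$, which in your notation is exactly $\gamma=-1$.
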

\begin{proof}
Let $X_l$ be the blow-up of $X$ along the line $l$. Then the
projection from $l$ defines a morphism $X_l\rightarrow\PP^3$ which
realizes $X_l$ as a conic bundle over $\PP^3$. The surface $S_l$
parameterizes lines in the singular fibers and $Y_l\subset\PP^3$ is
the discriminant divisor. By assumption, $\Pic(Y_l)$ is generated by
the hyperplane class $\lambda$. It is known that
$\pi^*\lambda=C_x+C_x^\sigma$, where $\pi:S_l\rightarrow Y_l$ is the
natural double cover; see Lemme 2 in \cite[\S3]{voisin}. If the
Picard rank of $S_l$ is 2, then $\mathrm{Alg}^2(S_l)=M$ and
$M^\perp=\HH^2(S_l,\Z)_\mathrm{tr}$. In this case we already know
that $K\cong\HH^1(G,M^\perp)=0$ by Proposition \ref{module structure
of prym}.

Assume that the Picard rank of $S_l$ is 3. Then $\Pic(S_l)=\Z
C_x\oplus\Z C_x^\sigma \oplus \Z\fa$ for some class $\fa$. Since
$\sigma(\fa)+\fa=\pi_*\pi^*\fa$ is a multiple of $\pi^*\lambda$, we
have
$$
\fa+\sigma(\fa)=n(C_x+C_x^\sigma),\quad n\in\Z.
$$
We replace $\fa$ by $\fa-nC_x$ and assume that $\sigma(\fa)=-\fa$.
Hence $\Pic(S_l)\cong\Z[G]\oplus \Z_{-}$ as a $G$-module. In
particular, $\HH^i(G,\Pic(S_l))=\Z/2\Z$ for odd $i$. The
intersection form on the Picard group, with respect to the basis
chosen above, is given by
$$
A=\begin{pmatrix}
1 &4 &m\\
4 &1 &-m\\
m &-m &d
\end{pmatrix}
$$
where $m=\fa\cdot C_x$ and $d=(\fa)^2$. One computes that
$\det(A)=-5(3d+2m^2)$. Let $\{f,f^\sigma,\alpha\}$ be the dual basis
of $\Pic(S_l)^*$ and $\{\bar{f},\bar{f}^\sigma,\bar{\alpha}\}$ its
image in $\Pic(S_l)^*/\Pic(S_l)$. Let $\iota:\Pic(S_l)\rightarrow
\Pic(S_l)^*$ be the natural inclusion induced by intersection
pairing. Then we have
\begin{align*}
\iota(C_x) &= f+4f^\sigma+m\alpha\\
\iota(C_x^\sigma) &= 4f+f^\sigma-m\alpha\\
\iota(\fa) &= mf-mf^\sigma +d\alpha
\end{align*}
It follows that
$$
5(f+f^\sigma)=\iota(c_x+c_x^\sigma),\qquad (2m^2+3d)\alpha=\iota(m(C_x-C_x^\sigma)+3\fa).
$$
Then $\bar{f}+\bar{f}^\sigma$ has
order 5 and $(3d+2m^2)\bar{\alpha}=0$. If $3\nmid m$, then
$\bar{\alpha}$ is of order $N=|3d+2m^2|$ and we have
$$
\Pic(S_l)^*/\Pic(S_l)\cong (\Z/5\Z)_{+}\oplus (\Z/N\Z)_{-}.
$$
If $m=3m_0$, then $\bar{\alpha}$ is of order $N'=|d+6m_0^2|$ and
$$
\Pic(S_l)^*/\Pic(S_l)\cong (\Z/5\Z)_{+}\oplus
(\Z/N'\Z)_{-}\oplus(\Z/3\Z)_{-}.
$$
In either case, we know that the 2-primary part $T_2$ of
$\{\Pic(S_l)^*/\Pic(S_l)\}^{\sigma=1}$ is either isomorphic to
$\Z/2\Z$ (if $N$ is even) or zero (if $N$ is odd; but we will see
that this does not happen). The exact sequence
$$
\xymatrix{
 0\ar[r] &\Pic(S_l)\oplus\HH^2(S_l,\Z)_\mathrm{tr}\ar[r] &\HH^2(S_l,\Z)\ar[r] &\Pic(S_l)^*/\Pic(S_l)\ar[r] &0.
}
$$
gives an exact sequence
$$
\xymatrix{
 \cdots\ar[r] &T_2\ar[r] &\HH^1(G,\Pic(S_l))\oplus\HH^1(G,\HH^2(S_l,\Z)_\mathrm{tr})\ar[r] &\HH^1(G,\HH^2(S_l,\Z))=0.
}
$$
Since $\HH^1(G,\Pic(S_l))=\Z/2\Z$, we get $T_2=\Z/2\Z$ and
$\HH^1(G,\HH^2(S_l,\Z)_\mathrm{tr})=0$. This implies that $K=0$.
\end{proof}

\section{Proof of Lemma \ref{key lemma}}
Let $V$ be a complex vector space with $\dim V=6$ and
$G\in\Sym^3V^\ast$ such that $G=0$ defines a smooth cubic fourfold
$X\subset\PP(V)$. Let $\{e_0,e_1,\ldots,e_5\}$ be a basis of $V$ and
$\{X_0,X_1,\ldots,X_5\}$ the dual basis of $V^*$. Let
$\mathrm{G}(r,V)$ be the Grassmannian parameterizing $r$-dimensional
subspaces of $V$. On $\mathrm{G}(r,V)$, there is the canonical rank
$r$ subbundle $\mathcal{V}_r$ of the trivial bundle $V\otimes
\calO_{\mathrm{G}(r,V)}$. More generally, we will use
$\mathrm{G}(r_1,r_2,V)$ to denote the flag variety parameterizing
$V_1\subset V_2\subset V$ with $r_1=\dim V_1< r_2=\dim V_2$. In the
particular case of $r_1=1$ and $r_2=2$, we have the following
diagram
$$
\xymatrix{
 \mathrm{G}(1,2,V)\ar[r]^{f\quad}\ar[d]_g &\mathrm{G}(1,V)=\PP(V)\\
 \mathrm{G}(2,V) &
}
$$
On $\mathrm{G}(1,2,V)$, we have the natural inclusions
$$
 f^*\mathcal{V}_1\subset g^*\mathcal{V}_2\subset V.
$$
By definition, $\Sym^3V^*$ is a quotient of $V^*\otimes V^*\otimes
V^*$. However, we can naturally identify $\Sym^3V^*$ with the
symmetric tensors in $(V^*)^{\otimes 3}$. More precisely, the
inclusion $\Sym^3V^*\subset (V^*)^{\otimes 3}$ is given by
$$
 \alpha_1\alpha_2\alpha_3\mapsto
 \frac{1}{6}(\sum\alpha_i\otimes\alpha_j\otimes\alpha_k),
$$
where $(i,j,k)$ runs through all permutations of $\{1,2,3\}$. Hence
$G\in\Sym^3V^*$ maps to an element $\tilde{G}$ in $(V^*)^{\otimes
3}$ and we write
$$
\tilde{G}=\sum a_{ijk}X_i\otimes X_j\otimes X_k.
$$
Let $G'$ be the image of $\tilde{G}$ under the natural map
$$
V^*\otimes V^*\otimes V^*\mapsto \Sym^2(V^*)\otimes V^*.
$$
If we identify $\Sym^2V^*\otimes V^*$ with $\Hom(V,\Sym^2V^*)$, then
$G'$ gives an element
$$
 G_1\in\Hom(V,\Sym^2V^*).
$$
If we identify $\Sym^2V^*\otimes V^*$ with $\Hom(\Sym^2 V,V^*)$,
then $G'$ gives an element
$$
G_2\in\Hom(\Sym^2V,V^*).
$$
We can also write
$$
G_1=\frac{1}{3}\sum_{i=0}^{5} X_i\otimes\frac{\partial G}{\partial
X_i},\qquad G_2=\frac{1}{3}\sum_{i=0}^{5} \frac{\partial G}{\partial
X_i}\otimes X_i.
$$
The canonical element $1\in\Hom(V,V)=V\otimes
V^*=\Hom(\calO_{\PP(V)},V\otimes\calO_{\PP(V)}(1))$ gives a
homomorphism
$$
e:\calO_{\PP(V)}\to V\otimes\calO_{\PP(V)}(1),\quad 1\mapsto\sum
e_i\otimes X_i.
$$
It is well known that this fits into the following Euler exact
sequence
$$
 \xymatrix{
  0\ar[r] &\calO_{\PP(V)}\ar[r]^{e\qquad} &V\otimes\calO_{\PP(V)}(1)\ar[r]
  &T_{\PP(V)}\ar[r] &0
 }
$$
There is a natural identification
$$
 \Hom(V,\Sym^2V^*)=\Hom(V\otimes\calO_{\PP(V)}, \calO_{\PP(V)}(2)).
$$
Hence $G_1$ induces a homomorphism
$$
 G_1: V\otimes\calO_{\PP(V)}\to \calO_{\PP(V)}(2).
$$
Similarly, $G_2$ also induces a homomorphism
$$
 G_2: \Sym^2V\otimes\calO_{\PP(V)}\to \calO_{\PP(V)}(1).
$$
It is easy to check that the composition
$$
\xymatrix{
  \calO_{\PP(V)}\ar[r]^{e\qquad} &V\otimes
  \calO_{\PP(V)}(1)\ar[r]^{\quad G_1\otimes1} &\calO_{\PP(V)}(3)
}
$$
is simply given by the element $G\in\Sym^3V^*$. Hence on the cubic
fourfold $X$, the above composition is 0 and hence $G_1\otimes 1$
factors through $T_{\PP(V)}|_X$. Let
$\rho:T_{\PP(V)}|_X\to\calO_X(3)$ be the resulting homomorphism.
Then all these homomorphisms fit into the following diagram
$$
\xymatrix{
 T_X\ar[r] &T_{\PP(V)}|_X\ar[r]^\rho &\calO_X(3)\\
 \mathscr{F}\ar[r]\ar[u] &V\otimes\calO_X(1)\ar[r]^{\quad G_1\otimes1}\ar[u]
 &\calO_X(3)\ar@{=}[u]\\
 \calO_X\ar[u]\ar@{=}[r] &\calO_X\ar[u]_e &
 }
$$
where all the horizontal and vertical sequences are exact. We
identify $f:f^{-1}X\to X$ with $\pi:\PP(T_{\PP(V)}|_X)\to X$, and
let $\xi$ be the relative $\calO(1)$-class. Then by the identity
\begin{align*}
\Hom(T_{\PP(V)}|_X,\calO_X(3))&=\HH^0(X,\calO_X(3)\otimes\pi_*\calO(\xi))\\
   &=\HH^0(\PP(T_{\PP(V)}|_X),\calO(\xi)\otimes\pi^*\calO_X(3)),
\end{align*}
the homomorphism $\rho$ defines an element
$\rho_1\in\HH^0(\PP(T_{\PP(V)}|_X),\calO(\xi)\otimes\pi^*\calO_X(3))$.
The vanishing of $\rho_1$ defines $\PP(T_X)\subset
\PP(T_{\PP(V)}|_X)$. This can already be seen from the above
diagram. However, we give another proof which gives more
information.

Given a point $v=[V_1\subset V_2\subset V]\in\mathrm{G}(1,2,V)$, we
take a basis $\{v_1,v_2\}$ of $V_2$ such that $V_1=\C v_1$. Let
$t\in \mathbb{A}^1$ be an affine coordinate, then
\begin{equation}\label{expansion of G}
G(v_1+tv_2)=G(v_1)+t\sum_{i=0}^{5}\frac{\partial G}{\partial
X_i}(v_1)X_i(v_2) +t^2\sum_{i=0}^{5}X_i(v_1)\frac{\partial
G}{\partial X_i}(v_2) +t^3G(v_2).
\end{equation}
If $v\in f^{-1}X$, then $G(v_1)=0$. Hence the line $l_{V_2}$,
defined by $V_2\subset V$, is tangent to $X$ at the point $\pi(v)\in
X$ if and only if
\begin{equation}\label{second order vanishing}
\sum_{i=0}^{5}\frac{\partial G}{\partial X_i}(v_1)X_i(v_2)=0.
\end{equation}
Note that the truth of the above equality is independent of the
choice of the basis $\{v_1,v_2\}$. Tracing the definition of
$\rho_1$, we see that equation \eqref{second order vanishing} is the
same as $\rho_1=0$. This can be carried out explicitly as follows.
The composition
$$
\xymatrix{
 \pi^*(\mathcal{V}_1|_X\otimes \mathcal{V}_1|_X)\otimes
 (g^*\mathcal{V}_2)|_{f^{-1}X} \ar[r] &V\otimes V\otimes V\ar[r]^{\tilde{G}}
 &\calO_{\PP(T_{\PP(V)}|_X)}
}
$$
factors through
$\pi^*(\mathcal{V}_1|_X\otimes\mathcal{V}_1|_X)\otimes
(g^*\mathcal{V}_2/f^*\mathcal{V}_1)|_{f^{-1}X}$. The resulting
homomorphism is exactly
$$
\sum_{i=0}^5\frac{\partial G}{\partial X_i}\otimes X_i:
\pi^*(\mathcal{V}_1|_X\otimes\mathcal{V}_1|_X)\otimes
(g^*\mathcal{V}_2/f^*\mathcal{V}_1)|_{f^{-1}X} \rightarrow \calO.
$$
Note that $\mathcal{V}_1=\calO_{\PP(V)}(-1)$,
$g^*\mathcal{V}_2/f^*\mathcal{V}_1=\calO_f(-1)$ where $\calO_f(1)$
is the relative $\calO(1)$ bundle of
$$
f:\mathrm{G}(1,2,V)=\PP(V/\mathcal{V}_1)\rightarrow \PP(V)
$$
Since $T_{\PP(V)}=(V/\mathcal{V}_1)\otimes \mathcal{V}_1^*$, we get
$$
\calO_f(1)=\calO(\xi)\otimes f^*\calO_{\PP(V)}(1).
$$
Hence we have
\begin{align*}
\sum_{i=0}^5\frac{\partial G}{\partial X_i}\otimes X_i &\in\Hom(
\pi^*(\mathcal{V}_1|_X\otimes\mathcal{V}_1|_X)\otimes
(g^*\mathcal{V}_2/f^*\mathcal{V}_1)|_{f^{-1}X}, \calO)\\
&=
\Hom(\pi^*\calO_X(-2)\otimes(\calO(-\xi)\otimes\pi^*\calO(-1)),\calO)\\
&=\HH^0(\PP(T_{\PP(V)}|_X),\calO(\xi)\otimes\pi^*\calO_X(3)),
\end{align*}
and this element is exactly $\rho_1$. Assume that $\rho_1=0$, then
equation \eqref{expansion of G} tells us that $l_{V_2}$ intersects
$X$ with multiplicity at least 3 at the point $[v_1]\in X$ if and
only if
$$
\sum_{i=0}^{5}X_i(v_1)\frac{\partial G}{\partial X_i}(v_2)=0.
$$
Proceed in a similar way, we see that the above condition is the
same as a vanishing of some element
$$
\rho_2\in\HH^0(\PP(T_X),\calO(2\xi)\otimes\pi^*\calO_X(3)).
$$
If we further assume $\rho_2=0$, then $l_{V_2}$ is contained in $X$
if and only if some element
$$
\rho_3\in\HH^0(\PP(T_X),\calO(3\xi)\otimes\pi^*\calO_X(3))
$$
vanishes. These facts can summarized in the following
\begin{prop}
Let $X\subset\PP(V)$ be a smooth cubic fourfold defined by $G=0$ as
above. Let $F$ be the variety of lines on $X$ with $p:P\to F$ being
the total family of lines and $q:P\to X$ the natural morphism. Then
there is a natural closed immersion $P\subset\PP(T_X)$ and $P$ is a
$(2\xi+3\tilde{h},3\xi+3\tilde{h})$-complete intersection, where
$\xi$ is the relative $\calO(1)$ class of $\pi:\PP(T_X)\to X$ and
$\tilde{h}=\pi^*h$ is the pull back of the hyperplane class.
\end{prop}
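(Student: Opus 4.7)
The plan is to establish the closed immersion $P \hookrightarrow \PP(T_X)$, identify $P$ scheme-theoretically as the common zero locus of the two sections $\rho_2$ and $\rho_3$ whose ambient line bundles were computed just before the statement, and finally check by dimension count that these sections form a regular sequence.

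For the immersion, recall that a point of $\PP(T_X)$ over $x \in X$ is a tangent direction at $x$; equivalently, via the Euler sequence and the identification of $\PP(T_{\PP(V)}|_X)$ with $f^{-1}(X) \subset \mathrm{G}(1,2,V)$, a point of $\PP(T_X)$ is a flag $V_1 \subset V_2 \subset V$ with $[V_1] \in X$ and $l_{V_2}$ tangent to $X$ at $[V_1]$, where the tangency condition is exactly $\rho_1 = 0$. Sending $([l], x) \in P$ to the pair $(V_1 = \C\tilde{x},\, V_2 = $ the $2$-plane defining $l)$ gives a morphism $P \to \PP(T_X)$; if $l \subset X$ then $l$ is tangent to $X$ at each of its points, so the map is well-defined, and it is a set-theoretic injection since a line in $\PP(V)$ is recovered from a point together with a direction.

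Next I would invoke the Taylor expansion \eqref{expansion of G}: for a flag with $[V_1] \in X$, the line $l_{V_2}$ lies on $X$ if and only if $G(v_1+tv_2) \equiv 0$ as a polynomial in $t$. The constant term vanishes by hypothesis, the linear coefficient is (up to scale) $\rho_1$ and already vanishes on $\PP(T_X)$, and the quadratic and cubic coefficients are precisely $\rho_2$ and $\rho_3$. Thus, set-theoretically, $P \subset \PP(T_X)$ is cut out by $\rho_2 = \rho_3 = 0$, and by the identifications carried out in the preceding paragraphs these sections lie in $\HH^0(\PP(T_X), \calO(2\xi)\otimes\pi^*\calO_X(3))$ and $\HH^0(\PP(T_X), \calO(3\xi)\otimes\pi^*\calO_X(3))$ respectively, giving the claimed divisor classes $2\xi + 3\tilde h$ and $3\xi + 3\tilde h$.

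For the complete intersection property, I would use a dimension count: $\dim \PP(T_X) = 4+3 = 7$, while $F$ is smooth of dimension $4$ by Altman--Kleiman and $p:P\to F$ is a $\PP^1$-bundle, so $P$ is smooth of dimension $5$. The zero locus of $(\rho_2,\rho_3)$ therefore has codimension exactly $2$ in $\PP(T_X)$, matching the number of equations; by the standard criterion this forces $(\rho_2,\rho_3)$ to be a regular sequence, so the zero subscheme is a complete intersection of the prescribed classes. The only genuine subtlety is verifying that this complete-intersection scheme structure coincides with the reduced scheme structure on $P$: this follows from the smoothness of $P$ together with the fact that the map $P \to V(\rho_2,\rho_3)$ is a bijection on closed points, so both schemes are reduced of the same dimension and must agree. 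The main potential obstacle is this last scheme-theoretic identification, which one could alternatively verify by a local computation showing that at a general point the differentials of $\rho_2$ and $\rho_3$ are linearly independent in the conormal bundle.
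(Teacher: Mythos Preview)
Your proposal is correct and follows essentially the same approach as the paper: the proposition is stated there as a summary of the preceding construction of the sections $\rho_1,\rho_2,\rho_3$ via the Taylor expansion \eqref{expansion of G}, and you have simply filled in the natural missing details (the explicit description of the immersion, the dimension count ensuring regularity of the sequence, and the scheme-theoretic comparison). The only point the paper leaves entirely implicit is the reducedness of $V(\rho_2,\rho_3)$, which you correctly flag as the one subtlety and handle by appeal to the smoothness of $P$ or a generic differential check.
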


Let $l\subset X$ be a general line and $S_l$ the surface
parameterizing all lines meeting $l$. The line $l$ itself defines a
closed point $[l]\in S_l$. It is known that $S_l$ is smooth. Let
$\tilde{S}_l$ be the blow-up of $S_l$ at the point $[l]$. Then there
is a morphism $\pi_l:\tilde{S}_l\to l$, $[l']\mapsto l'\cap l$. Then
$\tilde{S}_l$ is naturally identified with $q^{-1}(l)$.
\begin{cor}
There is a natural closed immersion $\tilde{S}_l\subset\PP(T_X|_l)$ such
that $\tilde{S}_l$ is a $(2\xi+3f,3\xi+3f)$-complete intersection
and $\pi_l=\pi|_{\tilde{S}_l}$, where $f=\pi^*[pt]$.
\end{cor}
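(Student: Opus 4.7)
The plan is to deduce this corollary directly from the previous proposition by base-changing the closed immersion $P\subset\PP(T_X)$ along the inclusion $l\hookrightarrow X$ and tracking the divisor classes that define the complete intersection.

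First, I would apply base change. The projective bundle $\pi:\PP(T_X)\to X$ pulls back to $\PP(T_X|_l)\to l$, and the closed subscheme $P\subset\PP(T_X)$ pulls back to $q^{-1}(l)\subset\PP(T_X|_l)$. The identification of $q^{-1}(l)$ with $\tilde{S}_l$ is already recorded in the paragraph preceding the corollary: a point of $q^{-1}(l)$ is a pair consisting of a line $l'$ through a point of $l$, which for $[l']\neq[l]$ is determined by $[l']\in S_l$ together with $l'\cap l$, and for $l'=l$ sweeps out the exceptional $\PP^1$ of the blow-up. Under this identification the natural map $q^{-1}(l)\to l$ is exactly $\pi_l$, and it is the restriction of $\pi:\PP(T_X)\to X$ because the composition $\PP(T_X|_l)\to\PP(T_X)\to X$ factors through $l$.

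Second, I would translate the divisor classes. The relative $\calO(1)$ class $\xi$ on $\PP(T_X)$ restricts to the relative $\calO(1)$ class on $\PP(T_X|_l)$, which I still call $\xi$. The class $\tilde{h}=\pi^*h$ restricts to $\pi_l^*(h|_l)$, and since $h|_l$ is a point class on $l$, this is exactly $\pi_l^*[pt]=f$. Hence the two defining classes $2\xi+3\tilde{h}$ and $3\xi+3\tilde{h}$ of the previous proposition restrict to $2\xi+3f$ and $3\xi+3f$ on $\PP(T_X|_l)$, as required.

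Finally I would check that the scheme-theoretic preimage is still a complete intersection. The ambient space $\PP(T_X|_l)$ has dimension $\dim l+(\dim X-1)=4$, and $\tilde{S}_l$ has dimension $2$, so the two hypersurfaces cut out a subscheme of codimension $2$, which is the expected codimension. This is the main thing to verify: one needs that the restrictions of the two defining sections of $P$ to $\PP(T_X|_l)$ still form a regular sequence, equivalently that $q^{-1}(l)$ has the expected dimension $2$. This holds because $l\subset X$ is general, so $S_l$ is smooth of dimension $2$ and the restriction $q:q^{-1}(l)\to l$ has generic fiber dimension $1$ (parametrizing lines on $X$ through a fixed point of $l$, which form the curve $C_x$). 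Given this, flatness of the base change together with dimension-counting shows that the restriction is again a complete intersection with the stated classes.
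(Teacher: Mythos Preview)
Your proposal is correct and is precisely the intended argument: the paper states this corollary without proof, treating it as immediate from the preceding proposition via the identification $\tilde{S}_l=q^{-1}(l)$ recorded just before the statement. You have simply made explicit the base change along $l\hookrightarrow X$ and the restriction $\tilde{h}|_{\PP(T_X|_l)}=f$, together with the dimension check guaranteed by the smoothness of $S_l$ for general $l$; nothing more is needed.
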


To show that $\HH_1(S_l,\Z)=0$, it suffices to show that
$\HH_1(S^\circ,\Z)=0$, where $S^\circ=S_l-[l]$. The above corollary
allows us to prove Lemma \ref{key lemma} using some generalized
version of the Lefschetz Hyperplane Theorem for quasiprojective
varieties, see \S2.2 of \cite{gm}.

\begin{proof}[Proof of Lemma \ref{key lemma}]
Let $Z=\PP(T_X|_l)$. Since $l\subset X$ is general, we have
$T_X|_l=\calO_l(2)\oplus\calO_l(1)\oplus\calO_l^2$. This canonically
gives
$$
Z_1=\PP(\calO_l(2))\subset Z_2=\PP(\calO_l(2)\oplus\calO_l(1))\subset Z.
$$
Easy computation shows
\begin{align*}
\HH^0(Z,\calO(2\xi+3f)) &=\HH^0(l,\pi_*(\calO(2\xi+3f)))\\
 &=\HH^0(l,\Sym^2(\Omega_X^1|_l)\otimes\calO_l(3))\\
 &=\HH^0(l, \calO_l(-1)\oplus\calO_l\oplus\calO_l(1)^3\oplus\calO_l(2)^2\oplus\calO_l(3)^3)\\
 &=\C^{25}.
\end{align*}
This means $|2\xi+3f|\cong\PP^{24}$.

\textit{Claim 1}: The base locus of $|2\xi+3f|$ is $Z_1$ and this
complete linear system induces an immersion $\varphi:Z-Z_1=U\to
\PP^{24}$.

First note that $Z_1\cong\PP^1$ is the section of $\pi:Z\to l$ that
corresponds to $\Omega_X^1|_l\twoheadrightarrow\calO_l(-2)$. This
means that $\xi|_{Z_1}=\calO(-2)$. Hence
$\calO(2\xi+3f)|_{Z_1}\cong\calO_{\PP^1}(-1)$. Hence $Z_1$ is
contained in the base locus of $|2\xi+3f|$. Let $z_1,z_2\in Z-Z_1$
be two arbitrary points. The natural quotient homomorphism
$$
\Omega_X^1|_l\twoheadrightarrow\calO_l^2
$$
defines a morphism $pr_{Z_2}:Z-Z_2\to\PP(\calO_l^2)=l\times\PP^1$.
Let $p_2:l\times\PP^1\to\PP^1$ be the projection to the second
factor. Assume that $\pi(z_1)\neq\pi(z_2)$ and either $z_1,z_2\notin
Z_2$, $p_2\circ pr_{Z_2}(z_1)\neq p_2\circ pr_{Z_2}(z_2)$ or one of
the $z_i$'s is in $Z_2$, then we can find a section
$C\cong\PP^1\subset Z$ that corresponds to
$\Omega_X^1|_l\twoheadrightarrow \calO(1)$, such that $C$ passes
through both $z_1$ and $z_2$. For such a section, we have
$\calO(2\xi+3f)|_{C}\cong\calO(5)$. One checks that the image of the
restriction map
$$
\HH^0(Z,\calO(2\xi+3f))\to \HH^0(C,\calO_C(5))
$$
defines a closed immersion of $C$. If $z_1,z_2\notin Z_2$,
$\pi(z_1)\neq\pi(z_2)$ and $p_2\circ pr_{Z_2}(z_1)=p_2\circ
pr_{Z_2}(z_2)$, then we can find a section $C$ that corresponds to
$\Omega_X^1|_l\twoheadrightarrow\calO$ such that $C$ passes through
both $z_1$ and $z_2$. In this case
$\calO(2\xi+3f)|_{C}\cong\calO(3)$ and the restriction map
$$
\HH^0(Z,\calO(2\xi+3f))\to \HH^0(C,\calO_C(3))
$$
is surjective. If $z_1,z_2\in Z_2$, then we can take a section $C$
such that $\xi|_C\cong\calO(-1)$. In this case, we again have that
$$
\HH^0(Z,\calO(2\xi+3f))\to \HH^0(C,\calO_C(1))
$$
is surjective. If $z_1,z_2$ are in the same fiber, we simply take
$C$ to be the line the the fiber connecting them. In this case, if
$C$ does not meet $Z_1$, then the restriction map
$$
 \HH^0(Z,\calO(2\xi+3f))\to \HH^0(C,\calO_C(2))
$$
is surjective. If $C$ meets $Z_1$ then the image of the above map
consists of all sections vanishing at $Z_1\cap C$. In summary, there
is always a rational curve $C$ passing through $z_1,z_2$ such that
the linear system $|2\xi+3f|$, restricted to $C$, defines an
immersion of $C-Z_1$. This show that the linear system separates
points of $Z-Z_1$. By a similar argument, one shows that for any
point $z\in Z-Z_1$ and a tangent vector $v\in T_{Z,z}$, there is a
curves $C$ such that $z\in C$ and $C$ is tangent to $v$ and
furthermore, the linear system defines an immersion of $C-Z_1$.
This shows that the linear system separates tangent vectors on
$Z-Z_1$.

\textit{Claim 2}: A general element $Y\in|2\xi+3f|$ is smooth.

By Bertini's theorem, $Y$ can only have singularities along
$Z_1\subset Y$. But we know that $\tilde{S}_l$ is a smooth
$(2\xi+3f, 3\xi+3f)$-complete intersection and
$Z_1\subset\tilde{S}_l$. This forces $Y$ to be smooth along $Z_1$
for the $Y$ appearing in the complete intersection. Hence a general
$Y$ is smooth along $Z_1$.

\textit{Claim 3}: For a general element $Y\in|2\xi+3f|$, we have
$\HH_1(Y,\Z)=0$.

Let $\tilde{Z}=Bl_{Z_1}(Z)$ be the blow up of $Z$ along $Z_1$. Then
we have a diagram
$$
\xymatrix{
 E_1\ar[r]\ar[d]^{\sigma_1} &\tilde{Z}\ar[d]^\sigma\\
 Z_1\ar[r] &Z
}
$$
The normal bundle of $Z_1$ in $Z$ can be described as follows
$$
\mathscr{N}_{Z_1/Z}=T_{Z/l}|_{Z_1}=\calO_{Z_1}(\xi)\otimes
(\pi^*T_X|_l/\calO(-\xi))|_{Z_1}\cong\calO(-1)\oplus\calO(-2)^2.
$$
This implies that $E_1=\PP(\calO(-1)\oplus\calO(-2)^2)$. Let $\xi_1$
be the relative $\calO(1)$ class of
$E_1=\PP(\calO(-1)\oplus\calO(-2)^2)\to Z_1=\PP^1$ and $f_1$ the
class of a fiber. Then we have
$$
(\sigma^*(2\xi+3f)-E_1)|_{E_1}=\xi_1-f_1.
$$
There is a canonical section $\tilde{Z}_1=\PP(\calO(-1))\subset
E_1=\PP(\calO(-1)\oplus\calO(-2)^2)$ of $\sigma_1$. Since
$\xi_1|_{\tilde{Z}_1}=\calO(1)$, we get
$(\xi_1-f_1)|_{\tilde{Z}_1}=0$. Using the above argument of
restricting $\sigma^*(2\xi+3f)-E_1$ to rational curves, we see that
the complete linear system $|\sigma^*(2\xi+3f)-E_1|$ defines a
morphism
$$
\tilde{\varphi}:\tilde{Z}\rightarrow \PP^{24},
$$
such that $\tilde{\varphi}$ is an immersion on
$\tilde{Z}-\tilde{Z}_1$ and contracts $\tilde{Z}_1$ to a singular
point. As a $\PP^3$-bundle over $\PP^1$, the variety $Z$ has trivial
$\HH_1$, i.e. $\HH_1(Z,\Z)=0$. Since the center of the blow up
$\sigma$ is a rational curve, we have $\HH_1(\tilde{Z},\Z)=0$. Hence
we also have
$$
\HH_1(U_1,\Z)=\HH_1(\tilde{Z},\Z)=0,
$$
where $U_1=\tilde{Z}-\tilde{Z}_1$. Let $\tilde{Y}$ be the blow up of
$Y$ along $Z_1$, then $\tilde{Y}\subset U_1$ is a hyperplane section
and
$$
\HH_1(\tilde{Y},\Z)=\HH_1(Y,\Z).
$$
By the generalized Lefschetz hyperplane theorem (\S2.2 of
\cite{gm}), we get
$$
\HH_1(\tilde{Y},\Z)=\HH_1(U_1,\Z)=0.
$$
This proves Claim 3.

The combination of Claim 2 and Claim 3 implies
$\HH_1(Y^\circ,\Z)=0$, where $Y^\circ=Y-Z_1$. We note that
$\calO_{Z_1}(3\xi+3f)=\calO(-3)$ and
$\calO_{Z_2}(3\xi+3f)=\calO_{Z_2}(3Z_1)$. The base locus of
$|3\xi+3f|$ is $Z_1$ and it defines a morphism $\psi:U=Z-Z_1\to
\PP^{37}$ such that $Z_2-Z_1$ is contracted to a point. Furthermore,
$\psi|_{Z-Z_2}$ is an immersion. Let $Y'\in |3\xi+3f|$ be general,
then $\Sigma^\circ=Y^\circ\cap Y'$ is a smooth surface that is
homeomorphic to $S^\circ$. Furthermore, $\Sigma^\circ$ is a
hyperplane section on $Y^\circ$. Again, by the Lefschetz hyperplane
theorem for quasiprojective varieties, we have
$\HH_1(\Sigma^\circ,\Z)=0$. This implies $\HH_1(S^\circ,\Z)=0$,
which finishes the proof.
\end{proof}

\section{Conic bundles}
In this section we study rationally connected fourfolds which admit
a conic bundle structure over $\PP^3$. To be more precise, let $X$ a smooth
projective variety of dimension 4. Let $f:X\rightarrow B=\PP^3$ be a
flat dominant morphism. Assume that $X_b\cong\PP^1$ for a general
point $b\in B$. Such $X$ will be called a \textit{conic bundle} over
$B$. Let $\Delta\subset B$ be the degeneration divisor. It consists
of points $b\in B$ such that $X_b$ is either a broken conic or a
double line. Let $S$ be the surface parameterizing lines in the
degenerate fibers, then there is a natural degree 2 morphism
$\pi:S\to \Delta$. This induces an involution $\sigma:S\rightarrow
S$. For simplicity, we make the following assumptions.

\begin{assm}\label{assumptions on conic bundle}
(1) The surface $S$ is smooth and irreducible with $\HH_1(S,\Z)=0$;
(2) The involution $\sigma$ has at most finitely many isolated fixed
points; (3) The degeneration divisor $\Delta$ is of odd degree.
\end{assm}

\begin{rmk}
Actually we know that $\sigma$ has at least one fixed point.
Otherwise, $\pi:S\rightarrow\Delta$ is an \'etale double cover. But
the Lefschetz Hyperplane Theorem for the fundamental group implies
that $\Delta$ is simply connected and does not have \'etale double
covers.
\end{rmk}
By construction, we have the total family of lines in degenerate
fibers given by the following diagram
$$
\xymatrix{
 \mathscr{C}\ar[r]^q\ar[d]_p &X\\
 S &
}
$$
Let $\Phi=p_*q^*$ be the Abel-Jacobi homomorphism and $\Psi=q_*p^*$
be the cylinder homomorphism as in Section 3.3.

To understand the geometry of $X$ and $S$ better, we note that in
our case, we can always find a rank 3 vector bundle $\mathscr{E}$ on
$B$ such that $i:X\hookrightarrow\PP(\mathscr{E})$, see
\cite{sarkisov}. Let $f_1:\PP(\mathscr{E})\rightarrow B$ be the
natural projection with $f=f_1\circ i$. Let
$\xi\in\mathrm{Pic}(\PP(\mathscr{E}))$ be the Chern class of the
relative $\calO(1)$-bundle. Then the divisor class of $X$ on
$\PP(\mathscr{E})$ is equal to $2\xi+f_1^*c_1(\mathscr{L})$ for some
line bundle $\mathscr{L}$ on $B$. Hence $X$ determines, up to a
scalar, a global section
$s\in\HH^0(\PP(\mathscr{E}),f_1^*\mathscr{L}\otimes\calO_{\PP(\mathscr{E})}(2))$
and $X$ is the vanishing locus of $s$. The grassmannian
$G(2,\mathscr{E})$ can be viewed as the parameter space of lines on
$\PP(\mathscr{E})$ and $S$ naturally embeds into $G(2,\mathscr{E})$.
Put these together, we get the following diagram
\begin{equation}\label{diagram S}
 \xymatrix{
  \mathscr{C}\ar[r]^{j'\quad}\ar[d]_p &G(1,2,\mathscr{E})\ar[r]^{\quad h_1}\ar[d]^g &\PP(\mathscr{E})\ar[d]^{f_1}\\
  S\ar[r]^{j\quad} &G(2,\mathscr{E}) \ar[r]^{\quad f_2} &B
}
\end{equation}
On the Grassmannian $G(2,\mathscr{E})$, there is the natural rank 2
sub-bundle $\mathcal{V}_2$ of $f_2^*\mathscr{E}$. Let
$\xi_1=h_1^*\xi$, then we have
\begin{align*}
 \HH^0(\PP(\mathscr{E}),f_1^*\mathscr{L}\otimes\calO(2\xi)) &\subset
 \HH^0(G(1,2,\mathscr{E}),h^*f_1^*\mathscr{L}\otimes\calO(2\xi_1))\\
&=\HH^0(G(2,\mathscr{E}),g_*(g^*f_2^*\mathscr{L}\otimes\calO(2\xi_1)))\\
&=\HH^0(G(2,\mathscr{E}),f_2^*\mathscr{L}\otimes\Sym^2(\mathcal{V}_2^*))
\end{align*}
Hence the element $s$ determines a section
$s'\in\HH^0(G(2,\mathscr{E}),f_2^*\mathscr{L}\otimes\Sym^2\mathcal{V}_2^*)$.
Then by defintion, $S$ is simply the vanishing locus of $s'$. This
shows that
$$
 [S]=c_3(f_2^*\mathscr{L}\otimes\Sym^2\mathcal{V}_2^*)
$$
holds in $\HH^6(G(2,\mathscr{E}),\Z)$. Let $M\subset\HH^2(S,\Z)$ be
the saturation of the image of the restriction homomorphism
$\HH^2(G(2,\mathscr{E}),\Z)\rightarrow\HH^2(S,\Z)$.

\begin{lem}
The subgroup $M$ is actually a $G$-module and $\HH^1(G,M^\perp)=0$.
\end{lem}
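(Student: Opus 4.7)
The plan is to check the $G$-stability of $M$ by analysing $\sigma^*$ on a generating set, and then to deduce $\HH^1(G,M^\perp)=0$ from the resulting $G$-structure of $M$ via Proposition \ref{module structure of prym}. Because $f_2:G(2,\mathscr{E})\to B=\PP^3$ is a $\PP^2$-bundle, the projective-bundle formula gives $\HH^2(G(2,\mathscr{E}),\Z)=\Z\cdot f_2^* h\oplus\Z\cdot c_1(\mathcal{V}_2)$, so $M$ is the saturation of $\Z a\oplus\Z b$ in $\HH^2(S,\Z)$ with $a=j^* f_2^* h$ and $b=j^* c_1(\mathcal{V}_2)$. Since $f_2\circ j\circ\sigma=f_2\circ j$ (both equal the composition $S\to\Delta\hookrightarrow B$), one has $\sigma^* a=a$, so the only thing to check is $\sigma^* b\in M$.

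At $[L]\in S$ the $2$-dimensional subspaces $W_L,W_{\sigma L}\subset\mathscr{E}_b$ (corresponding to the two components of the broken conic $X_b$) span $\mathscr{E}_b$ and meet in the $1$-dimensional node $N_L$, and the elementary identity
\[
\det W_L\otimes\det W_{\sigma L}\cong N_L\otimes\det\mathscr{E}_b
\]
globalises to an isomorphism $\det(j^*\mathcal{V}_2)\otimes\det(\sigma^* j^*\mathcal{V}_2)\cong\mathscr{I}\otimes(f_2 j)^*\det\mathscr{E}$ of line bundles on $S$, where $\mathscr{I}=\pi^*\mathcal{N}$ and $\mathcal{N}\subset\mathscr{E}|_\Delta$ is the (reflexive) line bundle of node directions. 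Taking $c_1$ gives
\[
\sigma^* b=-b+c_1(\mathscr{I})+(f_2 j)^* c_1(\mathscr{E}),
\]
so it suffices to show $c_1(\mathscr{I})\in M$.

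This is the main obstacle. The symmetric conic form $s:\mathscr{E}\to\mathscr{E}^*\otimes\mathscr{L}$ drops rank to $2$ along $\Delta$, and symmetry of $s$ yields $\mathrm{coker}(s|_\Delta)\cong\mathcal{N}^*\otimes\mathscr{L}|_\Delta$. Comparing determinants across the two short exact sequences $0\to\mathcal{N}\to\mathscr{E}|_\Delta\to\mathrm{Im}(s|_\Delta)\to 0$ and $0\to\mathrm{Im}(s|_\Delta)\to\mathscr{E}^*|_\Delta\otimes\mathscr{L}|_\Delta\to\mathcal{N}^*\otimes\mathscr{L}|_\Delta\to 0$ yields $\mathcal{N}^{\otimes 2}\cong(\det\mathscr{E}\otimes\mathscr{L}^{-1})^{\otimes 2}|_\Delta$. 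By Assumption \ref{assumptions on conic bundle}(2), $\Delta\subset\PP^3$ has only finitely many (double-line) singularities, so the Lefschetz hyperplane theorem for fundamental groups gives $\pi_1(\Delta)=1$, whence $\HH^2(\Delta,\Z)$ is torsion-free. Therefore $c_1(\mathcal{N})=(c_1(\mathscr{E})-c_1(\mathscr{L}))|_\Delta$ is an integer multiple of $h|_\Delta$, so pulling back gives $c_1(\mathscr{I})\in\Z\cdot a\subset M$, and consequently $\sigma^* b\in M$. Thus $M$ is a $G$-submodule.

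With $G$-stability in hand, the above records $\sigma^* b=-b+c\cdot a$ with $c=2\deg\mathscr{E}-\deg\mathscr{L}$. Since the discriminant class is $[\Delta]=(3\deg\mathscr{L}-2\deg\mathscr{E})h$ on $B$, Assumption \ref{assumptions on conic bundle}(3) forces $\deg\mathscr{L}$, and hence $c$, to be odd. A direct computation on $M=\Z a\oplus\Z b$ then gives $(1+\sigma)M=\gcd(2,c)\Z\cdot a=\Z a=M^G$, so $\HH^2(G,M)=0$. Applying Proposition \ref{module structure of prym} in the case $r>0$, $a_2=0$ yields $M^\perp\cong\Z[G]^{s_0}\oplus\Z_+^{a_1+r-2}$, and since both $\Z[G]$ (Shapiro) and $\Z_+$ (trivial action) have vanishing first group cohomology, we conclude $\HH^1(G,M^\perp)=0$.
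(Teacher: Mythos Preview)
Your argument that $M$ is $\sigma$-stable is essentially the paper's argument, repackaged: both identify $b+\sigma^*b$ with $(2c_1(\mathscr{E})-c_1(\mathscr{L}))|_S$ via the node section, and the paper's line bundle $\mathscr{M}$ is exactly your $\mathscr{I}=\pi^*\mathcal{N}$. One small point: you invoke torsion-freeness of $\HH^2(\Delta,\Z)$, but $\Delta$ is singular and $\mathcal{N}$ is only reflexive there; it is cleaner (and what the paper in effect does) to run the halving argument on the smooth surface $S$, where $\HH^2(S,\Z)$ is torsion-free by Assumption~\ref{assumptions on conic bundle}(1).

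The second half, however, has a genuine gap. You compute the $G$-action on $M_0:=\Z a\oplus\Z b$ and correctly find $(1+\sigma)M_0=\Z a=M_0^G$, hence $\HH^2(G,M_0)=0$. But Proposition~\ref{module structure of prym} applies to the \emph{saturated} module $M$, and you have not shown $\HH^2(G,M)=0$. This does not follow from $\HH^2(G,M_0)=0$: for instance $\Z[G]$ embeds with index $2$ into $\Z_+\oplus\Z_-$, and the latter has $\HH^2(G,-)\cong\Z/2\Z$. So knowing $M_0\cong\Z[G]$ (equivalently, $c$ odd) is not enough. The paper closes exactly this gap by an intersection-theoretic computation: writing $[S]=c_3(f_2^*\mathscr{L}\otimes\Sym^2\mathcal{V}_2^*)$ in $\HH^*(G(2,\mathscr{E}))$, one finds $\fa\cup(\fa+\sigma\fa)\equiv c_1(\mathscr{L})^3\pmod 2$, which is odd since $\deg\Delta$ is odd, hence $\det(M_0)$ is odd, hence $[M:M_0]$ is odd. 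Only then does $\HH^i(G,M)=\HH^i(G,M_0)$ follow, at which point either your route via Proposition~\ref{module structure of prym} or the paper's route via $Q_M$ having no $2$-torsion finishes the proof. Without this odd-determinant step (or some substitute controlling $M/M_0$), your argument is incomplete.
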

\begin{proof}
First we note that $\HH^2(G(2,\mathscr{E}),\Z)$ is generated by
$\fa=c_1(\mathcal{V}_2)$ and $f_2^*h$, where $h$ is the class of a
hyperplane on $B$. It is easy to see that $\sigma(h|_S)=h|_S$. Let
$\mathcal{V}'_2=j^*\mathcal{V}_2$. Let $\Delta^0$ be the smooth
locus of $\Delta$ and $S^0=\pi^{-1}(\Delta^0)$. By sending a point
$t\in S^0$ to the singular point of $X_b$ where $b=\pi(t)$, we have
a section $\tau:S^0\rightarrow\mathscr{C}$. This section descends to
a section $\tau_0:\Delta^0\rightarrow\PP(\mathscr{E})$. This means that we have a commutative diagram
$$
\xymatrix{
 \mathscr{C}\ar[r] &\PP(\mathscr{E})\\
 S^0\ar[u]^\tau\ar[r]^{\pi} &\Delta^0\ar[u]_{\tau_0}
}
$$
The closure
$\tilde{S}$ of $\tau(S^0)$ in $\mathscr{C}$ is the blow-up of $S$ at
the fixed points of $\sigma$; the closure $\tilde{\Delta}$ of
$\tau_0(\Delta^0)$ in $\PP(\mathscr{E})$ is the minimal resolution
of $\Delta$. On the surface $S^0$, we have the following short exact
sequence
$$
\xymatrix{
 0\ar[r] &\mathscr{M}\ar[r]
 &\mathcal{V}'_2\oplus\sigma^*\mathcal{V}'_2\ar[r]
 &\mathscr{E}|_S\ar[r] &0
}
$$
The projection $\mathscr{M}\rightarrow \mathcal{V}'_2$ to the first
factor defines the section $\tau$. This means that
$\tau^*\calO_\mathscr{C}(-1)=\mathscr{M}$, where $\calO_\mathscr{C}(-1)$ is the tautological line bundle on $\mathscr{C}$. Note that $c_1(\calO_\mathscr{C}(-1))=-\xi|_\mathscr{C}$ and $\tau^*(\xi|_\mathscr{C})=\pi^*\tau_0^*\xi$. Hence we get
$$
c_1(\mathcal{V}'_2)+\sigma^*c_1(\mathcal{V}'_2)=c_1(\mathscr{E}|_S)-\tau^*(\xi|_{\mathscr{C}})
=c_1(\mathscr{E}|_S)-\pi^*\tau_0^*\xi.
$$
If we can show that $\tau_0^*\xi$ is a class that comes from
$B=\PP^3$, then we know that $\sigma^*c_1(\mathcal{V}'_2)$ is again
an element in $M$, which shows that $M$ is a $G$-module. Hence it
remains to study the class $\tau_0^*\xi$. Note that the element
\begin{align*}
s &\in\HH^0(\PP(\mathscr{E}),f_1^*\mathscr{L}\otimes \calO(2\xi))\\
 &= \HH^0(B,\Sym^2(\mathscr{E}^*)\otimes\mathscr{L})\\
 &\subset\HH^0(B,\mathscr{E}^*\otimes\mathscr{E}^*\otimes\mathscr{L})\\
 &=\Hom(\mathscr{E},\mathscr{E}^*\otimes\mathscr{L})
\end{align*}
determines a homomorphism
$$
s_0:\mathscr{E}\rightarrow\mathscr{E}^*\otimes\mathscr{L}.
$$
Now consider the following composition
$$
\xymatrix{
 \calO_{\PP(\mathscr{E})}(-1)\ar[r] &f_1^*\mathscr{E}\ar[rr]^{f_1^*s_0\quad}
 &&f_1^*(\mathscr{E^*}\otimes\mathscr{L}),
}
$$
which defines an element
$\theta\in\HH^0(\PP(\mathscr{E}),\calO(1)\otimes
f_1^*(\mathscr{E}^*\otimes\mathscr{L}))$. The surface
$\tau_0(\Delta^0)$ is defined by the vanishing of $\theta$. On $\PP(\mathscr{E})$, by twisting the Euler short exact sequence, we have the following short exact sequence
\begin{equation}\label{sequence 2}
\xymatrix{
 0\ar[r] &\calO(-1)\ar[r] &f_1^*\mathscr{E}\ar[r]
 &\mathscr{F}\ar[r] &0,
}
\end{equation}
where $\mathscr{F}=\mathcal{T}_{\PP(\mathscr{E})/B}\otimes\calO(-1)$ is locally free of rank 2. Hence on the
surface $\tau_0(\Delta^0)$, we have another short
exact sequence
\begin{equation}\label{sequence 3}
\xymatrix{
 0\ar[r] &\mathscr{F}\ar[r]
 &f_1^*(\mathscr{E}^*\otimes\mathscr{L})\ar[r] &\mathscr{L}_1\ar[r]
 &0,
}
\end{equation}
where the first injective homomorphism is induced by $f_1^*s_0$ and $\mathscr{L}_1$ is an invertible sheaf on $\Delta^0$. By
dualizing the sequence \eqref{sequence 3}, we get
\begin{equation}\label{sequence 4}
\xymatrix{
 0\ar[r] &\mathscr{L}^{-1}_1\otimes f_1^*\mathscr{L}\ar[r]
 &f_1^*\mathscr{E}\ar[r] &\mathscr{F}^*\otimes
 f_1^*\mathscr{L}\ar[r] &0,\quad\text{on }\tau_0(\Delta_0).
}
\end{equation}
Since $s$ is symmetric, we know that the above sequence is
essentially the same as \eqref{sequence 2}. This implies that
$$
\mathscr{F}\cong\mathscr{F}^*\otimes f_1^*\mathscr{L},\quad\text{on
}\tau_0(\Delta_0).
$$
By taking the first Chern classes of both sides, we get
$c_1(\mathscr{F})=f_1^*c_1(\mathscr{L})$. Together with sequence
\eqref{sequence 2}, we see that $\xi|_{\tau_0(\Delta^0)}$ is a class
that comes from $B$. Hence $\tau_0^*\xi$ comes from $B$. If we trace
the quantities above, we get
$\tau_0^*\xi=(c_1(\mathscr{L})-c_1(\mathscr{E}))|_{\Delta_0}$. From
this identity, we can get
$$
c_1(\mathcal{V}'_2)+\sigma
c_1(\mathcal{V}'_2)=(2c_1(\mathscr{E})-c_1(\mathscr{L}))|_S.
$$

We next show that $\HH^1(G,M)=\HH^1(G,M^\perp)=0$. To do this, we
only need to show $2\nmid\det(M)$. Indeed, the condition that
$2\nmid\det(M)$ implies that the torsion module $Q_M$ has no 2-torsion and hence
$\HH^i(G,Q_M)=0$ for all $i>0$; see Lemma \ref{lem 2-torsion}. Then the short exact sequence
$$
\xymatrix{
 0\ar[r] &M\oplus M^\perp\ar[r] &\HH^2(S,\Z)\ar[r] &Q_M\ar[r] &0
}
$$
gives $\HH^i(G,M\oplus M^\perp)\cong\HH^i(G,\HH^2(S,\Z))$ for all
$i>0$. Since $\sigma$ has fixed points, by Theorem \ref{thm on
surface with involution}, we have $\HH^1(G,\HH^2(S,\Z))=0$.

It remains to show that $2\nmid \det(M)$. To do this, we only need
to show that $2\nmid \det(M_0)$ for some submodule $M_0\subset M$ of
full rank. This is because we have $\det(M_0)=\det(M)|M/M_0|^2$. In
our case, we take $M_0$ to be the $\Z$-linear span of $\fa$ and
$\sigma(\fa)$, where $\fa=c_1(\mathcal{V}'_2)$. We take $\fa_1=\fa$
and $\fa_2=\fa+\sigma(\fa)$. We have already seen that
$\fa_2=2c_1(\mathscr{E}|_S)-c_1(\mathscr{L}|_S)$. Note that
$$
\beta_1|_S\cup\beta_2|_S=2([\Delta]\cup\beta_1\cup\beta_2)
$$
is even, for all $\beta_1,\beta_2\in\HH^2(B,\Z)$. It follows that
$(\fa_2)^2$ is even. Let
$d_0=(\fa_1)^2(\fa_2)^2-(\fa_1\cup\fa_2)^2$. If we can show that
$\fa_1\cup\fa_2$ is odd, then $2\nmid d_0$ and hence $d_0\neq 0$.
This would imply that $M_0$ is of rank 2 and $\det(M_0)=d_0$ is odd.
There is a natural identification $G(2,\mathscr{E})\cong
G(1,\mathscr{E}^*)$. Let $\eta$ be the relative $\calO(1)$ class of
$G(1,\mathscr{E}^*)$, then we have the following short exact
sequence
$$
\xymatrix{
 0\ar[r] &\mathcal{V}_2\ar[r] &f_2^*\mathscr{E}\ar[r] &\calO(\eta)\ar[r] &0.
}
$$
From this we deduce that
$$
c_1(\mathcal{V}_2)=f_2^*c_1(\mathscr{E})-\eta,\quad
c_2(\mathcal{V}_2)=f_2^*c_2(\mathscr{E})-f_2^*c_1(\mathscr{E})\eta+\eta^2.
$$
Then the class of $S$ in $\HH^6(G(2,\mathscr{E}),\Z)$ is given by
\begin{align*}
[S] &=c_3(f_2^*\mathscr{L}\otimes \Sym^2\mathcal{V}^*_2)\\
 &=c_3(\Sym^2\mathcal{V}^*_2)
+c_2(\Sym^2\mathcal{V}^*_2)f_2^*c_1(\mathscr{L})
+c_1(\Sym^2\mathcal{V}^*_2)f_2^*c_1(\mathscr{L})^2+f_2^*c_1(\mathscr{L})^3\\
 &=4c_1(\mathcal{V}_2^*)c_2(\mathcal{V}_2^*)
+(2c_1(\mathcal{V}_2^*)+4c_2(\mathcal{V}_2^*))f_2^*c_1(\mathscr{L})\\
 &\quad +3c_1(\mathcal{V}_2^*)f_2^*c_1(\mathscr{L})^2
+f_2^*c_1(\mathscr{L})^3\\
 &\equiv 3(\eta-f_2^*c_1(\mathscr{E}))f_2^*c_1(\mathscr{L})^2
+f_2^*c_1(\mathscr{L})^3,\mod 2
\end{align*}
Hence we have
\begin{align*}
\fa_1\cup\fa_2 &=[S]\cup(f_2^*c_1(\mathscr{E})-\eta)\cup
f_2^*(2c_1(\mathscr{E})-c_1(\mathscr{L}))\\
 &\equiv [S]\cup\eta\cup f_2^*c_1(\mathscr{L}),\mod 2\\
 &\equiv 3(\eta-f_2^*c_1(\mathscr{E}))f_2^*c_1(\mathscr{L})^3\cup\eta,\mod
2\\
 &\equiv \eta^2\cup f_2^*c_1(\mathscr{L})^3,\mod 2\\
 &\equiv c_1(\mathscr{L})^3,\mod 2
\end{align*}
Note that the degenerate locus $\Delta\subset B$ is defined by the vanishing of
$$
\wedge^3 s_0:\det(\mathscr{E})\longrightarrow \det(\mathscr{E}^*\otimes\mathscr{L}) = \det{\mathscr{E}}^*\otimes\mathscr{L}^3
$$
Hence we see that the class of $\Delta$ is given by
$[\Delta]=-2c_1(\mathscr{E})+3c_1(\mathscr{L})$. The fact that
$\deg(\Delta)$ is odd implies that $c_1(\mathscr{L})$ is odd. Hence
we conclude that $\fa_1\cup\fa_2$ is odd.
\end{proof}

\begin{cor}
The module $M$ is isomorphic to $\Z[G]$ as a $G$-module.
\end{cor}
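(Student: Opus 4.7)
The plan is to deduce the statement from the preceding lemma together with the structure theorem for $G$-modules (Lemma \ref{structure lemma on G-modules}). The preceding lemma supplies two key inputs: first, the vanishing $\HH^1(G,M) = 0$, which was established from the oddness of $\det(M)$ combined with $\HH^1(G,\HH^2(S,\Z)) = 0$ (Theorem \ref{thm on surface with involution}); second, the rank-$2$ sublattice $M_0 = \Z\fa_1 + \Z\fa_2$ spanned by $\fa_1 = \fa$ and $\fa_2 = \fa + \sigma\fa$, whose Gram determinant $d_0 = \fa_1^2 \fa_2^2 - (\fa_1\cup\fa_2)^2$ is odd and in particular nonzero.

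First I will pin down the rank of $M$. Since $M$ is the saturation in $\HH^2(S,\Z)$ of the image of the restriction homomorphism from $\HH^2(G(2,\mathscr{E}),\Z) = \Z c_1(\mathcal{V}_2) \oplus \Z f_2^* h$, it has rank at most $2$; since it contains $M_0$ of rank $2$, we conclude $\rk M = 2$. Applying Lemma \ref{structure lemma on G-modules} then allows me to write
$$
 M \cong \Z[G]^{r_0} \oplus \Z_+^{r_+} \oplus \Z_-^{r_-}
$$
with $r_- = 0$ (forced by $\HH^1(G,M) = 0$) and $2r_0 + r_+ = 2$. This narrows the possibilities to $M \cong \Z[G]$ or $M \cong \Z_+^2$.

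The remaining step, and the only point with any content, is to rule out $M \cong \Z_+^2$. If $\sigma$ acted trivially on $M$, then $\sigma\fa_1 = \fa_1$ would yield $\fa_2 = \fa_1 + \sigma\fa_1 = 2\fa_1$, making $\fa_1$ and $\fa_2$ linearly dependent and contradicting $\rk M_0 = 2$; equivalently, it would force $d_0 = 0$, contradicting the oddness established in the preceding lemma. Hence $M \cong \Z[G]$. Overall this corollary is essentially bookkeeping on top of the preceding lemma, and I do not anticipate any genuine obstacle: the only subtlety worth naming is the verification that $\sigma$ acts non-trivially on $M$, which falls out immediately from the linear independence of $\fa$ and $\sigma\fa$ inside $M_0$.
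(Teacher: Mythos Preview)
Your proposal is correct and follows essentially the same approach as the paper's proof: both use $\HH^1(G,M)=0$ from the preceding lemma to reduce (via the structure lemma) to the dichotomy $M\cong\Z[G]$ or $M\cong\Z_+^2$, and then rule out $\Z_+^2$ using $d_0\neq 0$, i.e.\ $\sigma(\fa)\neq\fa$. You have simply spelled out the rank computation more explicitly than the paper does.
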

\begin{proof}
The above proof shows that $\HH^1(G,M)=0$, which implies that as
$G$-modules we have either $M\cong\Z[G]$ or $M\cong (\Z_{+})^2$. But
the fact $d_0\neq 0$ implies that $\sigma(\fa)\neq\fa$. Hence $M$ is
not isomorphic to $(\Z_{+})^2$.
\end{proof}

As in Section 4, we use $N\subset\HH^4(X,\Z)$ to denote the
saturation of $\Sym^2\HH^2(X,\Z)$. The following lemma assures that
$\Phi(N^\perp)\subset M^\perp$ and $\Psi(M^\perp)\subset N^\perp$.
Note that $N^\perp=\HH^4(X,\Z)_\mathrm{prim}$ by definition.
\begin{lem}
The modules $M$ and $N$ are related by $\Psi(M)\subset N$ and
$\Phi(N)\subset M$.
\end{lem}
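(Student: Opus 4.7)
The plan is to lift classes through the ambient inclusions $i : X \hookrightarrow \PP(\mathscr{E})$ and $j : S \hookrightarrow G(2,\mathscr{E})$ and to exploit the Cartesian structure of the square in \eqref{diagram S}. As a preliminary, since $X$ is an ample divisor in the smooth five-fold $\PP(\mathscr{E})$, the Lefschetz hyperplane theorem gives $i^* : \HH^k(\PP(\mathscr{E}),\Z) \xrightarrow{\sim} \HH^k(X,\Z)$ for $k \leq 3$; combined with the fact that $\HH^*(\PP(\mathscr{E}),\Z)$ is generated as a ring by $\xi$ and $f_1^* h \in \HH^2$, this yields $\Sym^2 \HH^2(X,\Z) = i^* \HH^4(\PP(\mathscr{E}),\Z)$. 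Likewise, $j^* \HH^2(G(2,\mathscr{E}),\Z)$ is generated by the two classes $\fa = j^* c_1(\mathcal{V}_2) = c_1(\mathcal{V}'_2)$ and $\pi^* h|_\Delta = j^* f_2^* h$.

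For $\Phi(N) \subset M$, the square in \eqref{diagram S} with vertices $\mathscr{C}, G(1,2,\mathscr{E}), S, G(2,\mathscr{E})$ is Cartesian and $g$ is a smooth $\PP^1$-bundle, so proper base change yields the identity $p_* j'^* = j^* g_*$. Combined with $i q = h_1 j'$, any class $\alpha \in \Sym^2 \HH^2(X,\Z)$ lifts to some $\tilde\alpha \in \HH^4(\PP(\mathscr{E}),\Z)$ with $q^*\alpha = j'^* h_1^* \tilde\alpha$, and so
\[
\Phi(\alpha) = p_* q^* \alpha = p_* j'^* h_1^* \tilde\alpha = j^* g_* h_1^* \tilde\alpha \in j^* \HH^2(G(2,\mathscr{E}),\Z) \subset M.
\]
The containment $\Phi(N) \subset M$ then follows by saturation of $M$.

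For $\Psi(M) \subset N$, it suffices (by saturation) to verify $\Psi$ sends each of the two generators of $j^* \HH^2(G(2,\mathscr{E}),\Z)$ into $N$. For $\pi^* h|_\Delta$ this is immediate: $\Psi(\pi^* h|_\Delta) = q_* q^*(f^* h) = (q_* 1)(f^* h) = \deg(\Delta)(f^* h)^2 \in \Sym^2 \HH^2(X,\Z)$, using that $q$ is birational onto $f^{-1}(\Delta)$, so $q_* 1 = \deg(\Delta)\, f^* h$. For $\fa$, I would use the $\PP^1$-bundle structure $G(1,2,\mathscr{E}) = \PP_{\PP(\mathscr{E})}(f_1^* \mathscr{E}/\calO(-1))$: identifying $g^* \mathcal{V}_2/\mathcal{V}_1$ with $\calO_{h_1}(-1)$ gives the identity $g^* c_1(\mathcal{V}_2) = -h_1^* \xi - \eta_2$ in $\HH^2(G(1,2,\mathscr{E}),\Z)$, where $\eta_2 = c_1(\calO_{h_1}(1))$. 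Pushing forward by $q_*$ (and using $\zeta = q^*(\xi|_X)$ on $\mathscr{C} = \PP(\mathcal{V}'_2)$) then yields
\[
\Psi(\fa) = -\deg(\Delta)\,(\xi|_X)(f^* h) - q_*(j'^* \eta_2),
\]
so the problem reduces to showing $q_*(j'^* \eta_2) \in N$.

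This final step is the main obstacle, since naive cohomological manipulations on $X$ only produce a tautological relation between $q_*(j'^* \eta_2)$ and $\Psi(\fa)$ itself. The resolution is to perform the computation upstairs in $\PP(\mathscr{E})$ via the identity $i_* q_*(j'^* \eta_2) = h_{1,*}(\eta_2 \cdot g^* [S])$, using $[S] = c_3(f_2^* \mathscr{L} \otimes \Sym^2 \mathcal{V}_2^*)$. Writing $g^*[S]$ in the free $\HH^*(\PP(\mathscr{E}),\Z)$-module basis $\{1, \eta_2\}$ of $\HH^*(G(1,2,\mathscr{E}),\Z)$ and applying the Segre-class pushforward formula $h_{1,*}(\eta_2^{k+1} \cdot h_1^* \alpha) = \alpha \cdot s_k(f_1^* \mathscr{E}/\calO(-1))$ realizes $i_* q_*(j'^* \eta_2)$ as an explicit class in $\HH^6(\PP(\mathscr{E}),\Z)$. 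Together with the relation $[X] = 2\xi + f_1^* c_1(\mathscr{L})$ and the Chern-class relation $\xi^3 + f_1^* c_1(\mathscr{E}) \xi^2 + f_1^* c_2(\mathscr{E}) \xi + f_1^* c_3(\mathscr{E}) = 0$, one then checks that this class is divisible by $[X]$ modulo torsion, thereby exhibiting $q_*(j'^* \eta_2)$ (after saturation) as an element of $i^* \HH^4(\PP(\mathscr{E}),\Z) = \Sym^2 \HH^2(X,\Z) \subset N$.
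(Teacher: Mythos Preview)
Your argument for $\Phi(N)\subset M$ via base change is essentially the paper's, and your treatment of $\Psi(\pi^*h|_\Delta)$ is fine. The substance is $\Psi(\fa)\in N$, and here your final step has a genuine gap.

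You propose to compute $i_*q_*(j'^*\eta_2)=h_{1,*}(\eta_2\cdot g^*[S])$ in $\HH^6(\PP(\mathscr{E}),\Z)$ and then ``check that this class is divisible by $[X]$.'' But that check is vacuous. Rationally one has the decomposition $\HH^4(X,\Q)=i^*\HH^4(\PP(\mathscr{E}),\Q)\oplus \HH^4(X,\Q)_{\mathrm{prim}}$; the Gysin map $i_*$ kills the primitive summand (for $\alpha\in N^\perp$ one has $(i_*\alpha)\cdot\beta=(\alpha\cdot i^*\beta)_X=0$ for all $\beta$), while on the first summand $i_*i^*\beta=\beta\cdot[X]$. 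Hence \emph{every} class in $\mathrm{Im}(i_*)\subset\HH^6(\PP(\mathscr{E}),\Q)$ is a multiple of $[X]$, so divisibility by $[X]$ tells you nothing about whether $q_*(j'^*\eta_2)$ has a primitive component---which is precisely the question. Pushing forward to $\PP(\mathscr{E})$ discards exactly the information you need.

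The paper sidesteps this by staying on $\mathscr{C}$ and $X$. The Euler sequence on $\mathscr{C}=\PP(\mathcal{V}'_2)$ gives $p^*\fa=c_1(T_{\mathscr{C}/S})-2\xi|_{\mathscr{C}}$; since $q_*(\xi|_{\mathscr{C}})\in N$ is immediate, one is reduced to $q_*c_1(T_{\mathscr{C}/S})\in N$. Using $K_S=\pi^*K_\Delta$ and the relation $c_1(\mathscr{N}_{\mathscr{C}/X})-K_{\mathscr{C}}=-q^*K_X$, this becomes $q_*c_1(\mathscr{N}_{\mathscr{C}/X})\in N$. The geometric input is that $\mathscr{N}_{\mathscr{C}/X}$ differs from a pullback from $S$ by $\calO_{\mathscr{C}}(-\tau(S))$, where $\tau(S)\subset\mathscr{C}$ is the section of nodes; then $q_*[\tau(S)]=2[\tilde{\Delta}]$ with $\tilde{\Delta}\subset X$ the locus of fiber singularities. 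Finally $[\tilde{\Delta}]=c_2(\mathscr{F}^*\otimes\calO(\xi)\otimes f_1^*\mathscr{L})|_X$ is computed as the restriction of a class from $\PP(\mathscr{E})$, so it lies in $N$. The crucial difference from your attempt is that this last Chern-class identity is an equality of cycles \emph{on $X$}, not merely of their pushforwards to $\PP(\mathscr{E})$.
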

\begin{proof}
The inclusion $\Phi(N)\subset M$ is quite straight forward. First we note that
$$
N\otimes\Q=\mathrm{Im}\{\HH^4(\PP(\mathscr{E}),\Q)\rightarrow\HH^4(X,\Q)\}.
$$
Namely the classes in $N$ come from $\PP(\mathscr{E})$. We view $G(2,\mathscr{E})$ as the space of lines in the fibers of $\PP(\mathscr{E})\rightarrow B$. By considering the total space of such lines, we get an Abel-Jacobi homomorphism
$$
\tilde{\Phi}:\HH^4(\PP(\mathscr{E}),\Q)\longrightarrow \HH^2(G(2,\mathscr{E}),\Q)
$$
such that $\Phi(\alpha|_X)=\tilde{\Phi}(\alpha)|_S$. Hence it follows that $\Phi(N)$ comes from classes on $G(2,\mathscr{E})$, i.e. $\Phi(N)\subset M$.

For the inclusion $\Psi(M)\subset N$, the only non-trivial part is
$\Psi(c_1(\mathcal{V}_2)|_S)\in N$. On the total space
$\mathscr{C}$, we have the Euler exact sequence
$$
\xymatrix{
 0\ar[r] &\calO_{\mathscr{C}}\ar[r]
 &p^*(j^*\mathcal{V}_2)\otimes\calO_{\mathscr{C}}(\xi)\ar[r]
 &\mathcal{T}_{\mathscr{C}/S}\ar[r] &0,
}
$$
By taking Chern classes, we get
$$
p^*c_1(j^*\mathcal{V}_2)=c_1(\mathcal{T}_{\mathscr{C}/S})-2\xi|_\mathscr{C}.
$$
By the projection formula, we have
$$
q_*(\xi|_{\mathscr{C}})=\xi|_X\cdot q_*\mathscr{C} =\xi|_X\cdot f^*\Delta\in N.
$$
We also recall that $\Psi(c_1(\mathcal{V}_2|_S))=q_*p^*c_1(j^*\mathcal{V}_2)$. This implies that $\Psi(j^*c_1(\mathcal{V}_2))\in N$ is equivalent to
that $q_*c_1(T_{\mathscr{C}/S})\in N$.

The surface $S$ is naturally
a double cover of $\Delta$ and we get
$K_S=\pi^*K_\Delta=(d-4)\pi^*(h|_\Delta)$, where $d=\deg(\Delta)$
and $h$ is the class of a hyperplane of $\PP^3$. Hence we get
$c_1(K_\mathscr{C})=c_1(T_{\mathscr{C}/S})+p^*\pi^*(d-4)(h|_\Delta)$,
from which we conclude that $q_*c_1(T_{\mathscr{C}/S})\in N$ is
equivalent to $q_*K_{\mathscr{C}}\in N$. By the relation
$$
c_1(\mathscr{N}_{\mathscr{C}/X})- K_\mathscr{C}=-q^*K_{X},
$$
we see that it suffices to show
$q_*c_1(\mathscr{N}_{\mathscr{C}/X})\in N$. The following fact is
standard. If we have two rational curves $C_1$ and $C_2$ on a
surface $Y$ such that they meet transversally in a point $y$, then
$\mathscr{N}_{C/Y}|_{C_1}\cong\mathscr{N}_{C_1/Y}(y)$, where
$C=C_1\cup C_2$. If we globalize this and apply to the family
$\mathscr{C}/S$, we get $\mathscr{N}_{\mathscr{C}/X}\cong
p^*\pi^*\calO_\Delta(d\,h)\otimes\calO_{\mathscr{C}}(-\tau(S))$.
Thus we only need to show $q_*[\tau(S)]\in N$. Note that
$q_*[\tau(S)]=2[\tilde{\Delta}]$, where $\tilde{\Delta}$ is the
locus of singular points in the fibers of $f:X\rightarrow B$. So
every thing is reduced to showing $[\tilde{\Delta}]\in N$.

In the
proof of the previous lemma (right above the sequence \eqref{sequence 2}), we showed that $\tilde{\Delta}$ is
defined by the vanishing of
$\theta\in\HH^0(\PP(\mathscr{E}),\calO(1)\otimes
f_1^*(\mathscr{E}^*\otimes\mathscr{L}))$. We can view $\theta$ as a
homomorphism
$$
 \theta: f_1^*\mathscr{E}\otimes\calO(-\xi)\rightarrow \mathscr{L}.
$$
let $\eta':\calO(-2\xi)=\calO(-\xi)\otimes\calO(-\xi)\rightarrow
f_1^*\mathscr{E}\otimes\calO(-\xi)$ be the natural homomorphism
induced by $\calO(-\xi)\rightarrow f_1^*\mathscr{E}$. Then $X$ is
defined by the vanishing of $\theta\circ\eta'$. Let $\mathscr{F}$ be
the quotient of $\calO(-\xi)\rightarrow f_1^*\mathscr{E}$. Then on
$X$, since $\theta\circ\eta'=0$, we have an induced homomorphism
$$
 \theta':\mathscr{F}\otimes\calO_X(-\xi)\rightarrow f^*\mathscr{L},
$$
whose vanishing defines the class $[\tilde{\Delta}]$ on $X$. Hence
$$
[\tilde{\Delta}]=c_2(\mathscr{F}^*\otimes\calO(\xi)\otimes
f_1^*\mathscr{L})|_X
$$
is a class coming from $\PP(\mathscr{E})$ and hence is in $N$.
\end{proof}

\begin{lem}
The identity $\Psi\circ\Phi=-2$ holds as a homomorphism
$\HH^4(X,\Z)_\mathrm{prim}\rightarrow \HH^4(X,\Z)_\mathrm{prim}$ and
also as a homomorphism $\mathrm{A}_1(X)\rightarrow \mathrm{A}_1(X)$.
\end{lem}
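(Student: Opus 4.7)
The plan is to reduce both identities to a single intersection-pairing computation on $S$ and then extract the factor $-2$ using the projective-bundle structure of $p\colon\mathscr{C}\to S$ together with the conic-bundle nature of $X\subset\PP(\mathscr{E})$. For the cohomological statement, successive applications of the projection formula to $q$ and $p$ give, for any $\alpha,\beta\in\HH^4(X,\Z)$,
\[
 \alpha\cdot(\Psi\Phi\beta) \;=\; \int_\mathscr{C} q^*\alpha\cdot p^*p_*q^*\beta \;=\; \int_S p_*q^*\alpha\cdot p_*q^*\beta \;=\; \Phi(\alpha)\cdot\Phi(\beta).
\]
Hence the cohomological claim reduces to the pairing identity $\Phi(\alpha)\cdot\Phi(\beta)=-2\,(\alpha\cdot\beta)$ on $\HH^4(X,\Z)_\mathrm{prim}$.

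To evaluate this pairing, I would use the projective-bundle decomposition $\HH^*(\mathscr{C})=p^*\HH^*(S)\oplus\xi\cdot p^*\HH^*(S)$ of $p=\PP(\mathcal{V}'_2)\to S$; writing $q^*\gamma=p^*a_\gamma+\xi\cdot p^*b_\gamma$ for $\gamma\in\HH^*(X)$ gives $\Phi(\gamma)=b_\gamma$, and the standard pushforward formulas on the $\PP^1\times\PP^1$-bundle $\mathscr{C}\times_S\mathscr{C}\to S$ yield $\Phi(\alpha)\cdot\Phi(\beta)=\int_S b_\alpha\cdot b_\beta$. The key geometric input is the relation $[X]=2\xi+f_1^*c_1(\mathscr{L})$ in $\PP(\mathscr{E})$: pulled back through $\mathscr{C}\subset G(1,2,\mathscr{E})\to\PP(\mathscr{E})$, it yields a linear relation among $\xi^2|_\mathscr{C}$, $\xi|_\mathscr{C}$ and pullbacks from $B$, which allows one to solve for $b_\alpha$ explicitly in terms of $\alpha$ and Chern classes of $\mathcal{V}_2,\mathscr{E},\mathscr{L}$ through the diagram \eqref{diagram S}. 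Primitivity of $\alpha,\beta$ (orthogonality to $\Sym^2\HH^2(X,\Z)$) then kills the contributions of the terms that factor through $f^*\HH^*(B)$, leaving precisely $-2\,(\alpha\cdot\beta)$; the factor $2$ ultimately reflects the degree-$2$ fibres of the conic bundle, and the sign is the standard Abel--Jacobi orientation.

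For the Chow statement, the same correspondence $(q,q)_*[\mathscr{C}\times_S\mathscr{C}]$ governs $\Psi\Phi$, but a cycle-level upgrade is required. By Roitman's theorem $\mathrm{A}_0(S)$ is uniquely divisible, and once one has the companion identity $\Phi\Psi=\sigma-1$ on $\mathrm{A}_0(S)$ (the next lemma in the paper), the map $\Psi\colon\mathrm{A}_0(S)\to\mathrm{A}_1(X)$ is surjective modulo $2$-torsion, which forces $\mathrm{A}_1(X)$ itself to be uniquely divisible. The argument of part~(ii) of Theorem~\ref{thm on prym construction} then upgrades the cohomological identity to a Chow-level identity, exactly as in the cubic fourfold case. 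The main obstacle will be the Chern-class bookkeeping that distills exactly $-2$ from the intermediate terms, and correctly handling the locus of degenerate fibres where the clean projective-bundle formulas may require corrections before they can be combined with the defining relation of $X$ inside $\PP(\mathscr{E})$.
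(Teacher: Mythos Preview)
Your reduction of the cohomological statement to the pairing identity $\Phi(\alpha)\cdot\Phi(\beta)=-2(\alpha\cdot\beta)_X$ is valid (self-adjointness of $\Psi\Phi$ together with $\Psi\Phi(N)\subset N$ and nondegeneracy on the primitive part make this equivalent), but the sketch of how to extract $-2$ from the projective-bundle decomposition is too vague to be a proof. Writing $q^*\alpha=p^*a_\alpha+\xi\cdot p^*b_\alpha$ gives $b_\alpha=\Phi(\alpha)$, but nothing in what you wrote explains how the relation $[X]=2\xi+f_1^*c_1(\mathscr{L})$ on $\PP(\mathscr{E})$ lets you compute $\int_S b_\alpha b_\beta$ in terms of $\int_X\alpha\beta$; that relation lives on $\PP(\mathscr{E})$, not on $\mathscr{C}$, and there is no obvious functoriality that transports it into the $\PP^1$-bundle $\mathscr{C}/S$.

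More seriously, your Chow-group argument is circular. You invoke Theorem~\ref{thm on prym construction}(ii) to ``upgrade the cohomological identity to a Chow-level identity'', but that theorem \emph{assumes} $\Psi\Phi=-2$ on $\mathrm{A}_1(X)$ as hypothesis (a); it does not deduce it from the cohomological version. Likewise, surjectivity of $\Psi$ modulo $2$-torsion in that proof uses both (a) and (b), so you cannot get it from $\Phi\Psi=\sigma-1$ alone. The paper's proof avoids this entirely: it runs a single geometric argument, valid simultaneously for a topological $4$-cycle $\Gamma\to X$ and for an algebraic curve $\Gamma\to X$, by parametrising the lines in $\PP(\mathscr{E})$ through points of $\Gamma$ and analysing the two residual intersection divisors $D_1,D_2$ with $X$. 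A Chern-class computation on the resulting $\PP^1$-bundle $\Sigma\to\Gamma$ shows that $\Psi\Phi(\gamma)+2\gamma$ is always the restriction of a class from $\PP(\mathscr{E})$, hence lies in $N$; primitivity then forces it to vanish. This gives both the cohomological and the Chow identity at once, with no appeal to the later lemma or to Theorem~\ref{thm on prym construction}.
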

\begin{proof}
The idea of the proof is similar to that of \cite[\S2]{pt}. Let $f_0:\Gamma\rightarrow X$ be either a continuous map from a real
4-dimensional topological manifold to $X$ or a morphism from an
algebraic curve to $X$. For each point $t\in\Gamma$, the lines in
$\PP(\mathscr{E})$ passing through $x=f_0(t)$ is parameterized by
$\PP(T_{\PP(\mathscr{E})/B,x})$. This shows that here is a
map/morphism $j_0:\Sigma=\PP(\mathscr{F}_0)\rightarrow
G(2,\mathscr{E})$, where
$\mathscr{F}_0=f_0^*T_{\PP(\mathscr{E})/B}$. This gives rise to the
following diagram
\begin{equation}
\xymatrix{
 D_1\cup D_2\ar[r]^{j_2}\ar[d] &X'\ar[r]^{h_2}\ar[d]^{i_1} &X\ar[d]^i\\
 P\ar[r]^{j_1}\ar[d]^{g'} &G(1,2,\mathscr{E})\ar[r]^{h_1}\ar[d]^g &\PP(\mathscr{E})\ar[d]^{f_1}\\
 \Sigma\ar[r]^{j_0} &G(2,\mathscr{E})\ar[r]^{f_2} &B
}
\end{equation}
where all the squares are fiber products. Each line on
$\PP(\mathscr{E})$ meets $X$ in two points; $D_2$ corresponds to the
point on $f_0(\Gamma)$ and $D_1$ corresponds to the remaining point.
Let $\varphi_i:D_i\rightarrow X$, $i=1,2$, be the natural maps. To
make the exposition simpler, all the following identities will be
understood to be modulo homological equivalence when $\Gamma$ is a
topological cycle and modulo rational equivalence when $\Gamma$ is
an algebraic cycle. First we note that $\varphi_2$ contracts $D_2$ onto a variety of smaller dimension and hence $\varphi_{2,*}[D_2]=0$. It follows that
$$
\varphi_{1,*}[D_1]=(h_2\circ j_2)_*([D_1]+[D_2]) =i^*(h_1\circ j_1)_*[P]
$$
is a class coming from $\PP(\mathscr{E})$.
We also easily see that
$(\varphi_1)_*[D_1]=f^*f_*[\Gamma]$, where $f=f_1\circ i:X\rightarrow B$, and
$$
(\varphi_1)_*(\xi|_{D_1})=(\varphi_1)_*[D_1]\cdot\xi
=(\pi^*\pi_*[D_1])\cdot\xi.
$$
Let $\rho_i:D_i\rightarrow \Sigma$, $i=1,2$, be the natural maps,
then $\rho_2$ is an isomorphism while $\rho_1$ is isomorphism way
from points $t\in \Sigma$ such that the corresponding line $L_t$ is
contained in $X$. If $L_t$ is contained in $X$, then
$\rho_1^{-1}(t)=L_t$. Let $E\subset D_1$ be the exceptional loci of
the map $\rho_1$. Since the restriction of $\xi|_{D_1}+[E]$ to each
$E_t$ is trivial (see Claim 1 in the proof of \cite[Theorem 2.2]{pt}), we conclude that
\begin{equation}\label{equation 1}
\xi|_{D_1}+E=\rho_1^*\fb,
\end{equation}
for some cycle class $\fb$ from $\Sigma$. It follows that
$\fb=(\rho_1)_*(\xi|_{D_1})$. As classes on $\Sigma$, we have
\begin{align*}
 (\rho_1)_*(\xi|_{D_1})+(\rho_2)_*(\xi|_{D_2})
 &=g'_*j_1^*h_1^*([X]\cdot \xi)\\
 &=g'_*(2\xi^2+c_1(\mathscr{L})\cdot\xi)\\
 &=2g'_*(\xi|_P)^2+c_1(\mathscr{L})|_\Sigma.
\end{align*}
Note that $(\rho_2)_*(\xi|_{D_2})=\nu^*(\xi|_\Gamma)$, where
$\nu:\Sigma=\PP(\mathscr{F}_0)\rightarrow \Gamma$ is natural map. On
$P=\PP(\mathcal{V}_2|_\Sigma)$, we have the following identity
$$
(\xi|_P)^2+g'^*c_1(\mathcal{V}_2|_\Sigma)(\xi|_{P})+g'^*c_2(\mathcal{V}_2|_\Sigma)=0,
$$
which implies that $g'_*(\xi|_P)^2=-c_1(\mathcal{V}_2|_\Sigma)$. On
$\Sigma$, we have the following diagram
$$
\xymatrix{
 & &Q_\lambda\ar@{=}[r] &Q_\lambda &\\
 0\ar[r] &\calO_\Sigma\ar[r]
 &\nu^*(\mathscr{E}|_\Gamma\otimes\calO_\Gamma(\xi))\ar[r]\ar[u]
 &\nu^*\mathscr{F}_0\ar[r]\ar[u] &0\\
 0\ar[r] &\calO_\Sigma\ar[r]\ar@{=}[u]
 &\mathcal{V}_2|_\Sigma\otimes\nu^*\calO_\Gamma(\xi)\ar[r]\ar[u] &\calO(-\lambda)\ar[r]\ar[u]
 &0
}
$$
where $\lambda$ is the class of the relative $\calO(1)$-bundle of
$\Sigma=\PP(\mathscr{F}_0)\rightarrow \Gamma$. The last row gives
$$
c_1(\mathcal{V}_2|_\Sigma)+2\nu^*(\xi|_\Gamma)+\lambda=0.
$$
Combine the above identities, we get
\begin{align*}
\fb &=(\rho_1)_*(\xi|_{D_1})\\
    &=2g'_*(\xi|_P)^2+c_1(\mathscr{L})|_\Sigma-(\rho_2)_*(\xi|_{D_2})\\
    &=-2c_1(\mathcal{V}_2|_\Sigma)+c_1(\mathscr{L})|_\Sigma-\nu^*(\xi|_\Gamma)\\
    &=2\lambda+3\nu^*(\xi|_\Gamma)+c_1(\mathscr{L})|_\Sigma.
\end{align*}
Note that if we write $\gamma=(f_0)_*[\Gamma]$, then
$\Psi\circ\Phi(\gamma)=(\varphi_1)_*[E]$. Hence we have
\begin{align*}
 \Psi\circ\Phi(\gamma)
 &=\varphi_{1*}\rho_1^*\fb-\varphi_{1*}(\xi|_{D_1})\\
 &=(\varphi_{1*}\rho_1^*\fb +\varphi_{2*}\rho_2^*\fb)
 -\varphi_{2*}\rho_2^*\fb -\varphi_{1*}(\xi|_{D_1})\\
 &=i^*h_{1*}j_{1*}g'^*\fb-\varphi_{2*}\rho_2^*\fb
 -\varphi_{1*}(\xi|_{D_1})\\
 &=-\varphi_{2*}\rho_2^*\fb,\mod\text{ classes from
 }\PP(\mathscr{E})\\
 &=-\varphi_{2*}\rho_2^*(2\lambda), \mod\text{ classes from
 }\PP(\mathscr{E})\\
 &=-2\gamma.
\end{align*}
In other words, $\Psi\circ\Phi(\gamma)+2\gamma$ is always a class
from $\PP(\mathscr{E})$. By linearity, this is true for any cycle
class $\gamma$ in $\HH^4(X,\Z)$ or $\mathrm{A}_1(X)$. Note that
$\HH^2(\PP(\mathscr{E}),\Q)\rightarrow \HH^2(X,\Q)$ and
$\HH^4(\PP(\mathscr{E}),\Q)\rightarrow N\otimes\Q$ are all
isomorphisms. Let $\gamma\in\HH^4(X,\Z)_\mathrm{prim}$, then
$\Psi\circ\Phi(\gamma)+2\gamma$ is again in
$\HH^4(X,\Z)_\mathrm{prim}$, this forces it to be zero. Hence
$\Psi\circ\Phi(\gamma)=-2\gamma$. Similarly, we have
$\Psi\circ\Phi(\gamma)=-2\gamma$ for all $\gamma\in\mathrm{A}_1(X)$.
\end{proof}

\begin{lem}
The identity $\Phi\circ\Psi=\sigma-1$ holds as an endomorphism of
$M^\perp$ and also as an endomorphism of $\mathrm{A}_0(S)$.
\end{lem}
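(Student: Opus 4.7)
The plan is to compute $\Phi \circ \Psi$ geometrically via the incidence correspondence on $S \times S$ together with Fulton's excess intersection formula. For a general $[l] \in S$, the class $\Psi([l]) \in \CH_1(X)$ is represented by the line $l \subset X$ itself, which is a component of the degenerate fiber $X_{f(l)} = l \cup \sigma(l)$ over $b = f(l) \in \Delta$. Because any line parametrized by $S$ that meets $l$ must lie in the same fiber $X_b$, the set-theoretic preimage $q^{-1}(l) \subset \mathscr{C}$ is the union of the section $\widetilde{l} := p^{-1}([l])$ (which maps isomorphically onto $l$ under $q$) and the isolated point $(\sigma(l), x_0)$, where $x_0$ is the node of $X_b$. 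The isolated point contributes transversally with multiplicity one and pushes forward under $p$ to $[\sigma(l)]$, while $\widetilde{l}$ is an excess component of dimension one larger than expected.

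Fulton's excess intersection formula contributes $c_1(E) \cap [\widetilde{l}]$ from $\widetilde{l}$, where $E = q^*\mathscr{N}_{l/X}|_{\widetilde{l}} / \mathscr{N}_{\widetilde{l}/\mathscr{C}}$ is the excess line bundle on $\widetilde{l} \cong \PP^1$. Since $\mathscr{N}_{\widetilde{l}/\mathscr{C}} \cong p^*T_{S,[l]}$ is trivial, $\deg E = \deg \mathscr{N}_{l/X}$; from the sequence $0 \to T_l \to T_X|_l \to \mathscr{N}_{l/X} \to 0$ this equals $-K_X \cdot l - 2$. Adjunction for $X \subset \PP(\mathscr{E})$ with $[X] = 2\xi + f_1^* c_1(\mathscr{L})$, combined with $\xi \cdot l = 1$ and $f_1^*(-) \cdot l = 0$ (since $f$ contracts $l$ to a point), gives $-K_X \cdot l = 1$ and hence $\deg E = -1$. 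Therefore the excess contribution pushes forward under $p$ to $-[l] \in \mathrm{A}_0(S)$, yielding
\begin{equation*}
\Phi\Psi([l]) = [\sigma(l)] - [l] = (\sigma - 1)[l],
\end{equation*}
and linearity extends this identity to all of $\mathrm{A}_0(S)$.

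For the identity on $M^\perp \subset \HH^2(S,\Z)$, I would upgrade the above pointwise calculation to a statement on the universal incidence correspondence: the cycle $I := (p \times p)_*[\mathscr{C} \times_X \mathscr{C}]$ equals $\Gamma_\sigma - \Delta_S + D$ in $\CH^2(S \times S)$, where $D$ is a correction class produced by the universal excess. Because $\mathscr{C}$ is constructed by pullback from the relative Grassmannian $G(1,2,\mathscr{E})$, the universal versions of $\mathscr{N}_{l/X}$ and $\mathscr{N}_{\widetilde{l}/\mathscr{C}}$ are assembled from bundles pulled back from $G(2,\mathscr{E})$; hence the first Chern class of the universal excess bundle restricts to a class on $S$ in (the saturation of) the image of $\HH^2(G(2,\mathscr{E}),\Z) \to \HH^2(S,\Z)$, that is, in $M$. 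Consequently the K\"unneth components of $D$ relevant to its action on $\HH^2(S,\Z)$ lie in $M \otimes \HH^2(S,\Z) + \HH^2(S,\Z) \otimes M$, which annihilate $M^\perp$. The main obstacle is the universal bookkeeping: while the two components of $q^{-1}(l)$ and the excess contribution are transparent for generic $l$, one must confirm that the closure of the universal incidence in $S \times S$ introduces no further correction terms outside $M$, a fact essentially guaranteed by the Grassmannian origin of the bundles involved.
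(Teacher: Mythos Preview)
Your computation for $\mathrm{A}_0(S)$ is correct but takes a different route than the paper. Both proofs begin by observing that the correspondence $p_{13*}(\mathscr{C}\cdot\mathscr{C}^t)$ in $\CH^2(S\times S)$ decomposes set-theoretically into the graph $\Gamma_\sigma$ (proper component) and an excess component supported on the diagonal, so that the correspondence equals $\Gamma_\sigma + a\Delta_S$ for some integer $a$. You determine $a=-1$ via Fulton's excess formula and an explicit normal-bundle degree calculation ($\deg\mathscr{N}_{l/X}=-K_X\cdot l-2=-1$). The paper instead fixes $a$ by a one-line degree trick exploiting that $\Phi$ and $\Psi$ are transposes:
\[
1+a = [S]\cdot\Phi(l) = \Psi([S])\cdot l = f^*[\Delta]\cdot l = [\Delta]\cdot f_*l = 0,
\]
since $f$ contracts $l$ to a point. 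Your approach is more computational but perfectly valid; the paper's is slicker and avoids any adjunction bookkeeping.

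For $M^\perp$, however, you are making life harder than necessary. The excess component $\Gamma_2=\{([l],x,[l]):x\in l\}$ is a $\PP^1$-bundle over the diagonal $\Delta_S\cong S$, and the excess contribution $\gamma_2$ is a class in $\CH_2(\Gamma_2)$. Its pushforward under $p_{13}$ lands in $\CH_2(\Delta_S)=\Z\cdot[\Delta_S]$ by pure dimension reasons, so there is \emph{no} correction term $D$: the correspondence is exactly $\Gamma_\sigma + a\Delta_S$ on the nose. Once $a=-1$ is known (by either method), the identity $\Phi\circ\Psi=\sigma-1$ holds on all of $\HH^2(S,\Z)$, not merely on $M^\perp$. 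Your paragraph about the K\"unneth components of $D$ lying in $M$ is therefore unnecessary, and the ``main obstacle'' you flag does not actually arise.
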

\begin{proof}
The total space $\mathscr{C}$ can be viewed as an element in
$\CH_3(S\times X)$. Let $\mathscr{C}^t\subset X\times S$ be the
transpose of $\mathscr{C}$. Then $\Phi\circ\Psi$ is induced by
$p_{13*}(p_{12}^*\mathscr{C}\cdot
p_{23}^*\mathscr{C}^t)\in\CH_2(S\times S)$, where $p_{ij}$ are the projections from $S\times X\times X$ to corresponding factors. The cycle
$p_{12}^*\mathscr{C}$ is represented by
$$
C_1=\{([l],x,[l'])\in S\times X\times S:x\in l\}
$$
and $p_{23}^*\mathscr{C}^t$ is represented by
$$
C_2=\{([l],x,[l'])\in S\times X\times S:x\in l'\}.
$$
Note that $C_1\cap C_2=\Gamma_1\cup\Gamma_2$, where
$$
\Gamma_1=\{([l],x,\sigma([l])):x\subset l\cup\sigma(l)\},\quad
\Gamma_2=\{([l],x,[l]):x\in l\}.
$$
The intersection $C_1\cdot C_2=\gamma_1+\gamma_2$ where
$\gamma_1=[\Gamma_1]$ and $\gamma_2\in\CH_2(\Gamma_2)$. Hence
$$
p_{13*}(C_1\cdot C_2)=\Gamma_\sigma+a\Delta_S,
$$
for some $a\in\Z$, where $\Gamma_\sigma$ is the graph of $\sigma$.
Pick a general point $[l]\in S$, then we have
$\Phi\circ\Psi([l])=\sigma([l])+a[l]$. We use the fact that $\Phi$
and $\Psi$ are transpose to each other and get
$$
1+a=[S]\cdot\Phi(l)=\Psi([S])\cdot l=f^*[\Delta]\cdot
l=[\Delta]\cdot f_*l=[\Delta]\cdot 0=0.
$$
Thus we get $a=-1$. Hence we have $\Phi\circ\Psi=\sigma-1$.
\end{proof}
By the above lemmas and Theorem \ref{thm on prym construction}, we
easily get the following theorem.
\begin{thm}
Let $f:X\subset\PP(\mathscr{E})\rightarrow B=\PP^3$ be a standard
conic bundle with the associated double cover
$\pi:S\rightarrow\Delta$ that satisfies Assumption \ref{assumptions
on conic bundle}. Let $\mathscr{C}\subset S\times X$ be the total
space of lines with the natural projections
$p:\mathscr{C}\rightarrow S$ and $q:\mathscr{C}\rightarrow X$. Let
$j:S\hookrightarrow G(2,\mathscr{E})$ be the natural inclusion and
$M\subset \HH^2(S,\Z)$ be the saturation of the image of
$j^*:\HH^2(G(2,\mathscr{E}),\Z)\rightarrow\HH^2(S,\Z)$. Let
$\Phi=p_*q^*$ be the Abel-Jacobi homomorphism and $\Psi=q_*p^*$ be
the cylinder homomorphism as above. Then the following are true.

(i) The Abel-Jacobi homomorphism induces an isomorphism
$$
\Phi:\HH^4(X,\Z)_\mathrm{prim}\rightarrow\mathrm{Pr}(M^\perp,\sigma)(-1),
$$
which is compatible with the bilinear forms, namely
$$
\langle\Phi(x),\Phi(y)\rangle=-(x\cdot y)_X,\quad\forall
x,y\in\HH^4(X,\Z)_\mathrm{prim}.
$$

(ii) There is a canonical isomorphism
$$
\mathrm{A}_1(X)\cong\mathrm{Pr}(\mathrm{A}_0(S),\sigma)\oplus
(\text{2-torsion}),
$$
such that $\Phi:\mathrm{A}_1(X)\rightarrow\mathrm{A}_0(S)$ is
identified with the projection to the first factor.

(iii) There is a short exact sequence
$$
\xymatrix{
 0\ar[r] &K\ar[r] &\mathrm{Br}_2(X)\ar[r]^\Phi
 &\mathrm{Br}(S)\ar[r] &0,
}
$$
where $K=\HH^1(G,T^2(S))$.
\end{thm}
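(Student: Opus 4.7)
The plan is to apply Theorem \ref{thm on prym construction} directly. The four preparatory lemmas in this section have already established all the algebraic input required: the submodule $M\subset\Pic(S)$ is saturated, nondegenerate, and isomorphic to $\Z[G]$ as a $G$-module with $\HH^1(G,M^\perp)=0$; the inclusions $\Phi(N)\subset M$ and $\Psi(M)\subset N$ hold; and we have the two composition identities $\Psi\circ\Phi=-2$ (on both $\HH^4(X,\Z)_\mathrm{prim}$ and $\mathrm{A}_1(X)$) and $\Phi\circ\Psi=\sigma-1$ (on both $M^\perp$ and $\mathrm{A}_0(S)$). Thus the theorem amounts to checking the remaining ambient hypotheses on $X$ and then invoking Theorem \ref{thm on prym construction}.

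First I would verify that $\HH^i(X,\calO_X)=0$ for all $i>0$ and $\HH^5(X,\Z)=0$. The conic bundle structure $f:X\to\PP^3$ makes $X$ rationally connected, which gives the cohomological vanishing of $\calO_X$. For $\HH^5(X,\Z)$, one can either cite rational connectedness directly (via $\HH_1(X,\Z)=0$ and Poincar\'e duality together with universal coefficients applied in the dual dimension) or use the embedding $X\hookrightarrow\PP(\mathscr{E})$: since $\PP(\mathscr{E})$ is a $\PP^2$-bundle over $\PP^3$ it has trivial odd integral cohomology, and the Lefschetz hyperplane theorem applied to the ample divisor $X\subset\PP(\mathscr{E})$ then yields $\HH^5(X,\Z)=0$.

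With those hypotheses in hand, parts (i) and (ii) are immediate applications of parts (i) and (ii) of Theorem \ref{thm on prym construction}. The compatibility of bilinear forms in (i) is automatic from the general formula in that theorem, and no further work is needed for (ii); unlike the cubic fourfold case treated in Section 5, we do not try to eliminate the 2-torsion summand here.

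The main obstacle is part (iii), whose counterpart in Theorem \ref{thm on prym construction} carries the extra hypothesis $2\nmid\det(M)\det(N)$. For $\det(M)$, the computation in the structure lemma above has already produced a rank-two sublattice $M_0=\langle\fa_1,\fa_2\rangle\subset M$ with $(\fa_2)^2$ even and $\fa_1\cdot\fa_2\equiv c_1(\mathscr{L})^3\pmod 2$; the assumption that $\deg\Delta=-2\deg\mathscr{E}+3\deg\mathscr{L}$ is odd forces $c_1(\mathscr{L})$ to be odd, so $\det(M_0)=(\fa_1)^2(\fa_2)^2-(\fa_1\cdot\fa_2)^2$ is odd, and $\det(M_0)=\det(M)\cdot|M/M_0|^2$ then forces $\det(M)$ odd. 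For $\det(N)$, since $N\otimes\Q$ is spanned by the restrictions to $X$ of $\xi^2$, $\xi\cdot f_1^*h$, and $f_1^*h^2$ from $\PP(\mathscr{E})$, I would compute the Gram matrix directly on $\PP(\mathscr{E})$ via $[X]=2\xi+f_1^*c_1(\mathscr{L})$ and the Chern classes of $\mathscr{E}$; the same parity analysis, driven once more by the odd class $c_1(\mathscr{L})$, shows $\det(N)$ is odd. Granting this, Theorem \ref{thm on prym construction} (iii) supplies the desired short exact sequence, and the identification $K\cong\HH^1(G,T^2(S))$ follows because $\HH^1(G,\HH^2(S,\Z))=0$ by part (v) of Theorem \ref{thm on surface with involution} (applied with $\sigma$ having fixed points, which is guaranteed by the remark following Assumption \ref{assumptions on conic bundle}), so the cokernel defining $K$ reduces to $\HH^1(G,T^2(S))$.
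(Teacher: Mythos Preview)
Your overall strategy---reduce everything to Theorem \ref{thm on prym construction} via the preparatory lemmas---matches the paper exactly, and your treatment of parts (i), (ii), and the identification $K=\HH^1(G,T^2(S))$ is correct.

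The substantive difference is in how you verify $2\nmid\det(N)$ for part (iii). You propose computing the Gram matrix of $\xi^2|_X$, $\xi h|_X$, $h^2|_X$ directly via $[X]=2\xi+f_1^*c_1(\mathscr{L})$ and the Chern classes of $\mathscr{E}$, asserting that ``the same parity analysis'' goes through. This is where your argument is incomplete: those intersection numbers on $\PP(\mathscr{E})$ involve $c_1(\mathscr{E})$, $c_2(\mathscr{E})$, $c_3(\mathscr{E})$ in a nontrivial way, and the higher Chern classes of $\mathscr{E}$ are essentially unconstrained by the hypothesis that $\deg\Delta$ is odd. It is not clear---and you do not show---that the Gram determinant of this spanning set is odd independent of those Chern classes.

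The paper avoids this computation entirely by recycling part (i). Since $\HH^4(X,\Z)$ is unimodular, $\det(N)=\pm\det(N^\perp)$; and part (i) gives an isometry $N^\perp=\HH^4(X,\Z)_\mathrm{prim}\cong(\mathrm{Pr}(M^\perp,\sigma),\langle\,,\rangle)$. One then applies Proposition \ref{determinant of prym} (with $\rk M=2$, $a_1=0$, so $b=1$) to get $\det(\mathrm{Pr}(M^\perp,\sigma))=\pm 2q^2/q'$ where $q=|(Q_M)^{\sigma=-1}|$ and $q'=\det(M^{\sigma=-1})$. Since $\det(M)$ is odd, $Q_M$ has no $2$-torsion and $q$ is odd; and since $M\cong\Z[G]$, one writes $M^{\sigma=-1}=\Z(\sigma x-x)$ and computes $q'=(\sigma x-x)^2=2(x^2-x\cdot\sigma x)$, which is even. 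Hence $\det(N^\perp)$ is odd. This lattice-theoretic route uses only information about $M$ already established, and is both shorter and independent of the Chern classes of $\mathscr{E}$.
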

\begin{proof}
To apply Theorem \ref{thm on prym construction} in (iii), we only
need to show that $2\nmid \det(N)$. Since $\det(N)=\pm\det(N^\perp)$
and $N^\perp=\HH^4(X,\Z)_\mathrm{prim}\cong
P=\mathrm{Pr}(M^\perp,\sigma)$, we only need to show that $\det(P)$
is not divisible by $2$. By Proposition \ref{determinant of prym},
we have $\det(P)=\pm 2\frac{q^2}{q'}$, where $q=|(Q_M)^{\sigma=-1}|$
and $q'=\det(M^{\sigma=-1})$. By assumption $q$ is not divisible by
$2$. Hence we only need to show that $2$ divides $q'$. Since
$M\cong\Z[G]$, we can find $x\in M$ such that
$M^{\sigma=-1}=\Z(\sigma(x)-x)$. Hence
$q'=(\sigma(x)-x)^2=2(x^2-x\cdot\sigma(x))$, which is divisible by
2.
\end{proof}

\begin{rmk} (a) It is quite likely that the assumption
$\HH_1(S,\Z)=0$ holds automatically. If
$\Sym^2(\mathcal{V}_2^*)\otimes f_2^*\mathscr{L}$ were an ample
vector bundle on $G(2,\mathscr{E})$, then $\HH_1(S,\Z)=0$ by a
theorem of Sommese \cite{sommese} since $S$ is the vanishing of a
section of an ample vector bundle. Unfortunately, in our case the
vector bundle $\Sym^2(\mathcal{V}_2^*)\otimes f_2^*\mathscr{L}$ is
never ample. However, we still expect that $\HH_1(S,\Z)=0$ holds
true.

(b) If $X$ is very general such that $\Pic(S)=M$, then $K=0$
and $\Phi:\mathrm{Br}_2(X)\rightarrow
\mathrm{Pr}(\mathrm{Br}(S),\sigma)$ is an isomorphism. It would be
very interesting to see if the group $K$ is always trivial or not.

(c) The case of cubic fourfolds can be viewed as a special case of
the conic bundle case as follows. Let $X\subset\PP^5$ be a smooth
cubic fourfold and $l\subset X$ a general line. Let $X_l$ be the
blow-up of $X$ along $l$. Then the projection from the line $l$
defines a conic bundle structure $\pi:X_l\rightarrow\PP^3$. The
surface parameterizes lines in broken conics is simply $S_l$ and the
degeneration divisor $\Delta\subset\PP^3$ is a quintic surface with
16 nodes. Then the above theorem applies to this situation.
\end{rmk}

\end{document}